\documentclass{amsart}
\usepackage[utf8]{inputenc}

\usepackage[T1]{fontenc}
\usepackage{amsmath,amssymb,amsthm}
\usepackage{graphicx}
\usepackage[dvipsnames]{xcolor}
\usepackage{verbatim}
\usepackage{lmodern}
\usepackage{tikz-cd}
\usepackage{hyperref}
\usepackage[english]{babel}
\usepackage{color}
\usepackage[margin=1in]{geometry}
\usepackage{cite}
\usepackage{mathtools}

\newtheorem{theorem}{Theorem}[section]
\newtheorem*{theorem*}{Theorem}

\newtheorem{lemma}[theorem]{Lemma}
\newtheorem{corollary}[theorem]{Corollary}
\newtheorem{remark}[theorem]{Remark}

\newtheorem{proposition}[theorem]{Proposition}

\newcommand{\Hom}{\mathsf{Hom}}
\newcommand{\RR}{\mathbb{R}}
\newcommand{\CC}{\mathbb{C}}

\newcommand{\ZZ}{\mathbb{Z}}

\newcommand{\GL}{\mathsf{GL}}

\newcommand{\SO}{\mathsf{SO}}

\renewcommand{\O}{\mathsf{O}}

\newcommand{\E}{\mathcal{E}}

\newcommand{\CP}{\mathbb{CP}}

\newcommand{\F}{\mathcal{F}}

\newcommand{\abs}[1]{\left|#1\right|}

\newcommand{\minus}{\setminus}

\newcommand{\St}{\mathsf{St}}

\renewcommand{\E}{\mathcal{E}}
\newcommand{\Gr}{\mathsf{Gr}}
\newcommand{\Grtil}{\Gr}
\newcommand{\rank}{\operatorname{Rank}}
\newcommand{\tr}{\operatorname{Tr}}

\newcommand{\Id}{\operatorname{Id}}
\newcommand{\wbar}{\overline{w}}

\usepackage[normalem]{ulem}
\newcounter{RCnumber}

\title[The Equal-Norm Grassmannian]{The Equal-Norm Grassmannian: Topology and Applications to Frame Fields}
\author{Samuel A.\ Ballas}
\address{Department of Mathematics, Florida State University, Tallahassee, FL 32306}
\email{sballas@fsu.edu}

\author{Ferhat Karabatman}
\address{Department of Mathematics, Florida State University, Tallahassee, FL 32306}
\email{fk22@fsu.edu}

\author{Tom Needham}
\address{Department of Mathematics, Florida State University, Tallahassee, FL 32306}
\email{tneedham@fsu.edu}

\author{Clayton Shonkwiler}
\address{Department of Mathematics, Colorado State University, Fort Collins, CO 80523}
\email{clayton.shonkwiler@colostate.edu}
\date{\today}

\begin{document}

\begin{abstract}
    Given an orientable rank-$d$ vector bundle over a $d$-manifold, the Euler class is the unique obstruction to finding a nowhere-zero section. An equal norm Parseval frame is a certain collection of sections, one of whose properties is that the lengths of each vector at each point are equal. This makes the Euler class an obvious obstruction to finding such a frame. In this work we show that this is the only obstruction so long as the number of sections is at least $2d+2$. We also identify precise obstructions when the number of sections is $2d$  and produce examples showing that for smaller numbers of sections it is sometimes impossible to find Parseval frame fields.
\end{abstract}

\maketitle

\section{Introduction}\label{sec:introduction}

Given a rank-$d$ vector bundle $E$ over a manifold $M$, a classical problem in algebraic topology involves investigating collections $\{\sigma_1,\ldots,\sigma_n\}$ of sections with various pointwise linear algebraic properties. Specific examples include nowhere zero sections and collections of sections which form linearly independent sets of vectors at each point. In this work we examine two additional pointwise geometric conditions, under the assumption that the vector bundle is equipped with a fiberwise inner product. The first is that of being \emph{equal norm}. This simply means that every vector of every section has the same fixed length at every point. A slightly more exotic condition is that the collection of sections be \emph{Parseval}, which says that for each $p \in M$, the associated \emph{analysis operator}
\[
E_p \to \RR^n:v \mapsto (\langle v, \sigma_i(p) \rangle)_{i=1}^n
\]
on the fiber $E_p$ has all singular values equal to one; for $n \geq d$, this is a generalization of the condition that the sections form an orthonormal basis at each point. Our interest in this condition is motivated as follows. When $M$ is a single point, $E$ is just a vector space, and a collection of sections is the same as a collection of vectors in $E$. In this setting, such a spanning collection is referred to as a \emph{frame}. Equal norm Parseval frames are commonly studied objects in the field of frame theory,  and have important applications to signal processing \cite{holmes2004optimal, casazza2003equal, bodmann2005frames, vershynin2005frame,goyal2001quantized,benedetto2006second,bodmann2007frame}. The vector bundle setting considered here is therefore a generalization of a rather natural condition arising in harmonic analysis. Borrowing this terminology, we refer to a collection of sections of $E$ which is pointwise spanning as a frame for $E$ and to the evaluation of such a collection at a point $p$ as a frame for $E_p$. There has been significant interest in questions of existence of frames with prescribed properties for vector bundles~\cite{frank2002frames,packer2004projective,freeman2012moving,freeman2012moving2,BallasNeedhamShonkwiler2025,favazza2026phase}, partially motivated by problems in mathematical physics~\cite{kuchment2008tight,auckly2018parseval,kuchment2016overview,cornean2019parseval,monaco2023topology}.

A simple exercise in linear algebra shows that a collection of sections of size $d$ is Parseval if and only if it is a pointwise orthonormal basis. Such a collection can exist if and only if the bundle $E$ is trivial, so one immediately sees that the topology of the bundle can place restrictions on the types of frame fields that the bundle can admit. Understanding these topological obstructions is the focus of this paper. The main results described below restrict to the setting where the dimension of the manifold $M$ is equal to the rank $d$ of the vector bundle $E$. To avoid the need to deal with special cases at certain points, we assume throughout the paper that $d > 2$. Furthermore, unless explicitly stated otherwise, we assume that $M$ is a closed, connected, orientable smooth manifold, and sections are assumed to be continuous.

One example of such a topological obstruction is the Euler class of an orientable vector bundle. This is classically known to be the primary obstruction to the existence of a nowhere zero section. If $\{\sigma_1,\ldots, \sigma_n\}$ is a collection of non-trivial equal norm sections, each $\sigma_i$ is a nowhere zero section. Thus the Euler class is an obvious obstruction to the existence of equal norm Parseval frame fields. Our first main result shows that when $n$ is sufficiently large (compared to $d$), this is the only obstruction.

\begin{theorem}\label{thm:main_thm}
    Let $E\to M$ be an orientable rank-$d$ vector bundle over a $d$-dimensional manifold and let $n\geq 2d+2$. $E$ admits an equal norm Parseval frame field of size $n$ if and only if the Euler class of $E$ is trivial. 
\end{theorem}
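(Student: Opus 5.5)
Necessity is immediate. If $\{\sigma_1,\ldots,\sigma_n\}$ is an equal norm Parseval frame field, then $\tr$ of the frame operator gives $\sum_i \|\sigma_i(p)\|^2 = d$. Each $\sigma_i$ therefore has constant norm $\sqrt{d/n}\neq 0$ and is a nowhere zero section. Since $E$ is orientable of rank $d$ over a closed $d$-manifold, the Euler class must vanish.

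For sufficiency, I would recast the problem as one about sections of an associated bundle. Let $\F_{d,n}$ denote the space of equal norm Parseval frames of $n$ vectors in $\RR^d$. This is the set of $d\times n$ matrices with orthonormal rows and columns of norm $\sqrt{d/n}$. It carries a left action of $\O(d)$, and an equal norm Parseval frame field of size $n$ is exactly a section of the fiber bundle $P\times_{\O(d)}\F_{d,n}$, where $P$ is the orthonormal frame bundle. Equivalently, taking orthogonal complements of the row spaces, it is a lift of the structure group through the equal-norm Grassmannian: the space of $d$-planes in $\RR^n$ whose orthogonal projection onto each coordinate axis has the same length. So I would construct the section by obstruction theory over a CW structure on $M$, which has dimension $d$. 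I would need two inputs about the fiber $\F_{d,n}$, both presumably the topological results on the equal-norm Grassmannian proved in the paper: that it is connected and simply connected, or more generally highly connected, in a range growing with $n-2d$; and that the first non-vanishing homotopy group in the relevant degree maps onto, or is controlled by, $\pi_{d-1}(S^{d-1})$ via the map sending a frame to its first vector.

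Concretely, the plan is this. Split $n=n_1+n_2$ with $n_1,n_2\geq d+1$, which is possible since $n\geq 2d+2$. Vanishing of the Euler class gives a unit section $u$ of $E$, so $E\cong \varepsilon\oplus E'$ with $E'$ the complement. On each fiber I want an equal norm Parseval frame in which every vector has a prescribed component $\sqrt{1/n}\,u$, up to sign, together with components in $E'$. This reduces the problem to finding $n$ sections of $E'$ forming a Parseval frame of $E'$ with norms $\sqrt{(d-1)/n}$, subject to a linear constraint coming from orthogonality to $u$, namely $\sum \pm \tau_i=0$. I would try to build these in two blocks: a frame of $n_1$ vectors in $E'$ and another of $n_2$ vectors, each with equal norms. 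The extra room from having more than $d$ vectors in each block lets the fiber of the associated bundle be $(d-1)$-connected, so all obstructions in dimensions $\leq d$ vanish. The cleanest version of this is to show directly that $\F_{d,n}$ minus the equal-norm condition, intersected with the set of frames having a fixed first direction, is $(d-1)$-connected when $n\geq 2d+2$. Then obstruction theory gives a section extending over the whole $d$-skeleton, which is all of $M$.

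The main obstacle is this connectivity estimate for the fiber: that the relevant equal-norm frame space, with one vector direction fixed, satisfies $\pi_k=0$ for $k\leq d-1$ when $n\geq 2d+2$. I would attempt it by fibering $\F_{d,n}$ over the equal-norm Grassmannian and using the known eigenstep or toric-symplectic description of equal norm Parseval frames. Alternatively, I would use a Morse/Bott or transversality argument that deforms any map $S^k\to \St(d,n)$ into the equal-norm locus whenever codimension counting permits, roughly requiring $k+n \leq $ the dimension available, which yields the $2d+2$ threshold. Failing that, I would fall back on the primary obstruction: I would identify it with the Euler class, by checking that the map $\pi_{d-1}(\text{fiber})\to\pi_{d-1}(S^{d-1})$ is injective, and show that the higher obstructions vanish for dimensional reasons.
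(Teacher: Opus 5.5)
Your necessity argument is fine, but the sufficiency plan has a genuine gap, and its concrete form cannot work. If every frame vector has the form $v_i=\pm n^{-1/2}u+\tau_i$ with $\tau_i\perp u$, then $\abs{\tau_i}^2=(d-1)/n>0$ everywhere. So $u$ and $\tau_1/\abs{\tau_1}$ are two pointwise orthonormal sections of $E$, and vanishing of $e(E)$ does not give you that. For example, $E=TS^5$ has $e(E)=0$, so the theorem gives equal norm Parseval frame fields of every size $n\ge 12$. Yet $S^5$ carries no two linearly independent vector fields (Steenrod--Whitehead, or Adams). The two-block variant inside $E'$ fails for the same reason.

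The connectivity claim behind the plan is also false. Since $\St_{d,n}$ is $(n-d-1)$-connected and $\pi_k(\St_{d,n},\E^S_{d,n})\cong\pi_k(\Gr_{d,n},\E_{d,n})$, the paper's computation $\pi_d(\Gr_{d,n},\E_{d,n})\cong\ZZ^n$ gives the following. Your fiber $\F_{d,n}=\E^S_{d,n}$ is only $(d-2)$-connected, with $\pi_{d-1}(\F_{d,n})\cong\ZZ^n$, one summand detected by each map $V\mapsto v_i/\abs{v_i}\in S^{d-1}$. So connectivity does not grow with $n$. Fixing the first direction removes only one summand: the fiber of $V\mapsto v_1/\abs{v_1}$ still has $\pi_{d-1}$ mapping onto a copy of $\ZZ^{n-1}$. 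For $d=3$, your $u$-prescribed fiber with all signs $+$ is $\{\theta\in T^n:\sum_j e^{i\theta_j}=\sum_j e^{2i\theta_j}=0\}$. It is not even simply connected, because the rotation loop maps to $(1,\dots,1)\in\pi_1(T^n)$. There is always a nonzero primary obstruction group $H^d(M;\pi_{d-1})$, and the obstruction in it must actually be computed.

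Your fallback is the right skeleton, and it is essentially the paper's strategy. A size-$n$ Parseval frame field (Theorem \ref{thm:parseval_frames_exist}) gives a classifying map into $\Gr_{d,n}$, and one deforms it into $\E_{d,n}$. This is equivalent to your section problem, since $\F_{d,n}\to\E_{d,n}$ is a principal $\SO(d)$-bundle. But you misstate the key step. The single map $\pi_{d-1}(\F_{d,n})\to\pi_{d-1}(S^{d-1})$ is a coordinate projection $\ZZ^n\to\ZZ$, not an injection. What you need is that the $n$ vector maps \emph{jointly} give an isomorphism $\pi_{d-1}(\F_{d,n})\cong\ZZ^n$; then each coordinate of the obstruction is $e(E)$, as in Lemma \ref{lem:relation_to_Euler}.

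Proving that isomorphism is the real content of the theorem, and your proposal has no mechanism for it. Direct transversality into the equal norm locus cannot work, since $\E_{d,n}$ is cut out by $n-1$ equations. The paper instead proceeds in three steps:
(i) Kempf--Ness theory shows that $\Gr_{d,n}\setminus Z$ deformation retracts onto $\E_{d,n}$, where $Z$ is a union of Schubert varieties.
(ii) The codimension bound $(d-\ell)(\abs{I}-\ell)\ge d$ gives $(d-1)$-connectivity of the pair.
(iii) Duality and Mayer--Vietoris show that for $n\ge 2d+2$ the only codimension-$d$ strata are the zero loci $Z_i$ of the tautological sections, so $\pi_d(\Gr_{d,n},\E_{d,n})\cong\ZZ^n$ is freely generated by their meridians.

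Step (iii) is exactly where $n\ge 2d+2$ enters. For $n=2d$ and $n=2d+1$ the extra strata $N_i$ and $D_{ij}$ appear and carry further obstructions, for instance $e(E^\perp)$ when $n=2d$. An argument that never sees them cannot be correct, and the splitting $n=n_1+n_2$ plays no role.
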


For smaller $n$, there are additional obstructions in the form of Stiefel–Whitney classes of certain complementary bundles. Specifically, a Parseval frame field of size $n$ defines an isometric bundle embedding of $E$ into $M\times \RR^n$. Such a bundle admits a complement $E^\perp$, which is a rank-$(n-d)$ bundle. The isomorphism class of this bundle depends on the choice of frame, but different choices of frames give rise to \emph{stably equivalent} bundles. More precisely, for each choice we have $E\oplus E^\perp\cong \epsilon^n$, where $\epsilon^n$ is the trivial rank-$n$ bundle over $M$, and so any such choice of complement is a representative of the inverse of the reduced real $K$-theory class of $E$.  Thus stable characteristic classes, such as Stiefel–Whitney classes, give honest invariants of $E$ that do not depend on the particular choice of Parseval frame. More precisely, since $E^\perp$ is a stable inverse of $E$ it follows that $w(E^\perp)=w(E)^{-1}$ (i.e., the inverse in the cohomology ring). Henceforth, we will use the notation $\wbar(E)=w(E)^{-1}$, and $\wbar_k(E)$ for the corresponding graded piece of the total Stiefel–Whitney class. 

We also study an additional critical dimension $n=2d$. In this case the bundle $E^\perp$ is rank-$d$. Since the base manifold has dimension $d$, it follows that there is a top Stiefel–Whitney class $\wbar_d(E)\in H^d(M,\ZZ_2)\cong \ZZ_2$. In this critical dimension we see that this Stiefel–Whitney class is the only additional obstruction.

\begin{theorem}\label{thm:critical_dimension_theorem}
    Let $d>2$ and let $E\to M$ be an orientable rank-$d$ vector bundle over a closed orientable $d$-manifold. Then $E$ admits an equal norm Parseval frame of size $2d$ if and only if $e(E)=0$ and $\wbar_d(E)=0$. 
\end{theorem}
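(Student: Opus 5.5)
The plan is to reduce the existence of an equal norm Parseval frame field to a section problem for a bundle whose fiber is the space of equal norm Parseval frames. A Parseval frame field of size $n=2d$ is the same thing as an isometric embedding $E\hookrightarrow M\times\RR^{2d}$. Write $\Phi_p:E_p\to\RR^{2d}$ for the analysis operator. Its image is a $d$-plane $V_p$, and the frame vectors are $\sigma_i(p)=\Phi_p^*(e_i)$. The equal norm condition says that the orthogonal projection $P_p$ onto $V_p$ satisfies $\|P_pe_i\|^2=d/n=1/2$ for every $i$. So we need a classifying map $M\to\Gr_d(\RR^{2d})$ for $E$ whose image lies in the equal-norm Grassmannian $\{V:\|P_Ve_i\|^2=1/2\ \forall i\}$. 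Projection onto $V$ has diagonal entries $1/2$ exactly when projection onto $V^\perp$ does. Hence $E^\perp$ is a rank-$d$ bundle whose induced frame is also equal norm Parseval.

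\textbf{Necessity.} We already know $e(E)=0$. For $\wbar_d(E)$: the columns $\tau_i(p)=(I-P_p)e_i$ give $2d$ sections of $E^\perp$, each of constant norm $1/\sqrt2$, so $E^\perp$ has a nowhere-zero section. Then $w_d(E^\perp)$, the mod $2$ reduction of the top obstruction, vanishes. This holds even if $E^\perp$ is non-orientable, since $w_d$ is always the obstruction to a nonvanishing section of a rank-$d$ bundle over a $d$-complex. Because $w(E^\perp)=\wbar(E)$, this gives $\wbar_d(E)=0$.

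\textbf{Sufficiency.} First I will use $e(E)=0$ to pick a nowhere-zero unit section $s$, splitting $E\cong \epsilon^1\oplus F$ with $F$ of rank $d-1$. I will look for an isometric embedding into $\RR^{2d}$ together with a complement $E^\perp$ that also has a nowhere-zero section. The condition $\wbar_d(E)=0$ gives the existence of a nowhere-zero section of any rank-$d$ stable inverse $G$ with $E\oplus G\cong\epsilon^{2d}$. For existence of such a $G$: since $\dim M=d<2d$, stable classes are realized in rank $d$ and the inclusion $\Gr_d(\RR^{2d})\to\Gr_d(\RR^\infty)$ is $d$-connected. So $E$ embeds in $\epsilon^{2d}$, and the complement $G$ has $w_d(G)=\wbar_d(E)=0$. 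Over a $d$-dimensional base this gives a nowhere-zero section of $G$; for non-orientable $G$ one uses twisted coefficients, which I expect to reduce to the mod $2$ class. Thus $E\oplus G\cong\epsilon^{2d}$ with $G=\epsilon^1\oplus G'$, and both summands contain trivial lines.

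Next I will use the two trivial lines to set up a normal form. After an isometry of $\RR^{2d}$ we may arrange $s$ and the section of $G$ to map to fixed vectors $u,u'$ in a pair of coordinate-type directions. The remaining problem is a $(2d-2)$-dimensional embedding problem of $F\oplus G'$, which we twist so as to push the diagonal entries of the projection to $1/2$. Concretely, I want a fiberwise path or map that moves any isometric embedding into the equal-norm locus, in the spirit of an eigensteps or frame-homotopy argument. Using the two distinguished orthogonal directions $u\in V$ and $u'\in V^\perp$, rotations in the $\{u,u'\}$-planes combined with a fixed equal-norm choice on those directions could adjust the diagonal.

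\textbf{Main obstacle.} I expect the hard step to be showing that the relevant obstruction group for lifting $M\to\Gr_d(\RR^{2d})$ to the equal-norm Grassmannian vanishes, given the splitting. Equivalently, one must compute the low-dimensional topology of the equal-norm Grassmannian: that it is $(d-1)$-connected relative to the right subspace, and that the only remaining class in $\pi_d$ is detected by $e(E)$ and $w_d(E^\perp)$. My first attempt would be an obstruction-theoretic argument with the fibration given by the equal-norm Grassmannian over the space of its "diagonal defect". Failing that, I would give a direct construction using the two nowhere-vanishing sections, pairing coordinates $e_{2j-1},e_{2j}$ and using $45^\circ$ rotations to make every diagonal entry exactly $1/2$.
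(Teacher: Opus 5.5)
Your necessity argument is correct and matches the paper's; it only uses the implication ``nonvanishing section $\Rightarrow w_d=0$''. The sufficiency argument breaks at the claim that $\wbar_d(E)=0$ gives a nowhere-zero section of \emph{any} rank-$d$ complement $G$ with $E\oplus G\cong\epsilon^{2d}$. Since $w_1(G)=w_1(E)=0$, $G$ is orientable over the closed orientable $M$. So the obstruction to a nonvanishing section is the integral Euler class $e(G)\in H^d(M;\ZZ)\cong\ZZ$, and $w_d(G)=0$ only says that $e(G)$ is even. A concrete failure: over $S^d$ with $d$ even, $\epsilon^d\oplus TS^d\cong\epsilon^{2d}$, so $TS^d$ is a rank-$d$ complement of the trivial bundle with $w_d=0$ but Euler number $2$.

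The missing idea is that the complement depends on the embedding, so you must \emph{change the embedding} to kill this even Euler class. The paper does this by surgering a classifying map $f:M\to\Gr_{d,2d}$ on a small disk, using $\sigma:S^d\to\Gr_{d,2d}$, $x\mapsto\RR^{d-1}\oplus\langle x\rangle$. Because $\sigma^\ast\tau_{d,2d}$ is trivial, $\sigma$ is null in $\Gr_d$, so the new map still classifies $E$. Because $\sigma^\ast\tau_{d,2d}^\perp\cong TS^d$, the surgery shifts $e(E^\perp)$ by $\pm 2$. Evenness of $e(E^\perp)$, which is all that $\wbar_d(E)=0$ provides, then lets one reach $e(E^\perp)=0$.

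There is a second gap. Even with nonvanishing sections of both $E$ and $E^\perp$ in hand, you have not shown that this suffices to move the classifying map into $\E_{d,2d}$. You flag this yourself as the main obstacle, and the rotation sketch does not resolve it. Splitting off $u\in V$ and $u'\in V^\perp$ leaves the condition $(P_F)_{ii}=\tfrac12-u_i^2$ on the projection onto the nontrivial rank-$(d-1)$ bundle $F$ inside $\{u,u'\}^\perp$. That is a prescribed-diagonal problem no easier than the original, and it is unclear how the $45^\circ$ pairing trick applies beyond the case where the bundle is trivial.

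The paper supplies this step as follows:
\begin{enumerate}
\item Kempf--Ness gradient flow gives a strong deformation retraction of $\Gr_{d,2d}\minus Z$ onto $\E_{d,2d}$.
\item The Schubert codimension formula shows every unstable stratum has codimension at least $d$, so the pair $(\Gr_{d,2d},\E_{d,2d})$ is $(d-1)$-connected.
\item $\pi_d(\Gr_{d,2d},\E_{d,2d})\cong\ZZ^{4d}$, freely generated by meridians of $Z_i=\{F: e_i\perp F\}$ and $N_i=\{F: e_i\in F\}$. These are the transverse zero loci of the canonical sections of $\tau_{d,2d}$ and $\tau_{d,2d}^\perp$.
\end{enumerate}
Consequently the components of the primary obstruction are exactly $e(E)$ and $e(E^\perp)$. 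This is precisely the statement your approach needs but does not prove.
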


When $d$ is odd, the antipodal map is orientation reversing, and this implies that the Euler class is an element of 2-torsion. If we further assume that the base is orientable then $H^d(M,\ZZ)$ is torsion free, and hence the Euler class is forced to vanish.

From the work of \cite{BallasNeedhamShonkwiler2025} (see Theorem \ref{thm:parseval_frames_exist}) $E$ admits a Parseval frame field of size $2d$, and the corresponding complement $E^\perp$ has rank $d$. By the argument above we have $e(E^\perp)=0$, and since $\wbar_d(E)=w_d(E^\perp)$ is the mod 2 reduction of $e(E^\perp)$, it follows that $\wbar_d(E)=0$, and we get the following corollary.

\begin{corollary}\label{cor:odd_case}
    Let $d$ be odd and $E\to M$ a rank-$d$ orientable vector bundle over a $d$-dimensional smooth orientable manifold. $E$ always admits an equal norm Parseval frame of size $2d$.
\end{corollary}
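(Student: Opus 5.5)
The plan is to derive Corollary~\ref{cor:odd_case} directly from Theorem~\ref{thm:critical_dimension_theorem}. That theorem says the two obstructions to an equal norm Parseval frame of size $2d$ are $e(E)$ and $\wbar_d(E)$, so it suffices to show both vanish when $d$ is odd. Since $d$ is odd, $d\ge 3>2$, and the standing hypothesis $d>2$ holds. As throughout the paper, $M$ is assumed closed and connected, so $H^d(M;\ZZ)\cong\ZZ$ is torsion free.

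\textbf{Step 1: the Euler class vanishes.} The bundle automorphism $v\mapsto -v$ of $E$ has determinant $(-1)^d=-1$ on each fiber, so it reverses the fiber orientation. By naturality of the Euler class under orientation reversal, $e(E)=e(\overline{E})=-e(E)$, where $\overline{E}$ is $E$ with the opposite orientation. Hence $2e(E)=0$. Because $H^d(M;\ZZ)\cong\ZZ$ is torsion free, this forces $e(E)=0$.

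\textbf{Step 2: the top dual Stiefel--Whitney class vanishes.} By Theorem~\ref{thm:parseval_frames_exist}, $E$ admits a Parseval frame field of size $2d$. Such a frame gives an isometric embedding $E\hookrightarrow M\times\RR^{2d}$ with a rank-$d$ complement $E^\perp$ satisfying $E\oplus E^\perp\cong\epsilon^{2d}$. By the Whitney sum formula, $w(E^\perp)=\wbar(E)$, so $\wbar_d(E)=w_d(E^\perp)$.

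Next we check that $E^\perp$ is orientable. We have $w_1(E^\perp)=\wbar_1(E)=w_1(E)=0$, since $E$ is orientable. Applying Step~1 to the orientable rank-$d$ bundle $E^\perp$ gives $e(E^\perp)=0$. The top Stiefel--Whitney class of an oriented bundle is the mod 2 reduction of its Euler class, so $w_d(E^\perp)=0$, and therefore $\wbar_d(E)=0$.

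\textbf{Conclusion.} With $e(E)=0$ and $\wbar_d(E)=0$, Theorem~\ref{thm:critical_dimension_theorem} yields an equal norm Parseval frame of size $2d$.

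All the substantive work lies in Theorem~\ref{thm:critical_dimension_theorem}, so no step here is a real obstacle. The only points needing care are that $E^\perp$ is orientable, so Step~1 applies to it, and that $H^d(M;\ZZ)$ has no torsion, which uses that $M$ is closed, connected and orientable.
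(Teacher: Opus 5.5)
Your proposal is correct and follows the paper's own argument. The antipodal map makes $e(E)$ and $e(E^\perp)$ 2-torsion in the torsion-free group $H^d(M;\ZZ)$, so both vanish; then $\wbar_d(E)=w_d(E^\perp)=0$ and Theorem~\ref{thm:critical_dimension_theorem} applies, and your explicit check that $E^\perp$ is orientable via $w_1(E^\perp)=\wbar_1(E)=w_1(E)=0$ is a detail the paper leaves implicit.
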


Our methods also provide partial results in the intermediate case \(n=2d+1\). In particular, we establish that the vanishing of \(e(E)\) and \(\bar w_d(E)\) is necessary for the existence of an equal-norm Parseval frame field.

In \cite{BallasNeedhamShonkwiler2025} the authors show that for any orientable rank-$d$ vector bundle over a $k$-manifold, it is possible to find a Parseval frame field of size $n=d+k$. In Section~\ref{sec:examples} we will provide examples of vector bundles that do not admit Parseval frame fields of size $d+k-1$. These results fit nicely together in the following picture. There is a stable range where $n\geq 2d+2$ where a rank-$d$ vector bundle over an orientable $d$-manifold admits an equal norm Parseval frame field of size $n$ precisely when $e(E)=0$.  On the other hand there is a subcritical range $n<2d$, where there is not even a guarantee that one can find a Parseval frame field. 

\subsection{Overview of techniques}

It is well known that orientable vector bundles are classified by the infinite oriented Grassmannian $\Grtil_d$. More precisely, there is a universal tautological rank-$d$ vector bundle $\tau_d$ over $\Grtil_d$. Every oriented vector bundle $E\to M$ arises from pulling back $\tau_d$ by some map $f:M\to \Grtil_d$. Furthermore, homotopic maps give rise to isomorphic vector bundles. When one has a Parseval frame field of size $n$, it gives rise to a map $f:M\to \Grtil_{d,n}$ into a finite Grassmannian. This map can be regarded as a classifying map (see Lemma \ref{lem:classifying_map}) if we regard $\Grtil_{d,n}\hookrightarrow \Grtil_d$. Said a bit differently, if one can find a Parseval frame field of size $n$, then it is possible to homotope a classifying map to have image in $\Grtil_{d,n}$. Furthermore, there is a subspace $\E_{d,n}\subset \Grtil_{d,n}$ parametrizing equivalence classes of equal-norm Parseval frames, and the problem of finding an equal norm Parseval frame of size $n$ can then be rephrased as deforming a classifying map of $E$ to have image in $\E_{d,n}$ (see Proposition \ref{prop:homotope_to_equal_norm}). 

Deforming maps to have image contained in a subspace is a classical algebraic topological problem. There is a well established obstruction theory that provides families of cohomological invariants that obstruct such deformations. More precisely, these cohomological invariants live in the groups $H^k(M;\pi_k(\Grtil_{d,n},\E_{d,n}))$. As we will see, once one gets to the critical range $n\geq 2d$, the pair $(\Grtil_{d,n},\E_{d,n})$ is $(d-1)$-connected.  We can also calculate geometrically motivated generators for the first non-trivial group $\pi_d(\Grtil_{d,n},\E_{d,n})$ (see Theorem \ref{thm:d-1_connected_plus_generators}). When $n\neq 2d+1$ we show that these generators freely generate $\pi_d(\Grtil_{d,n},\E_{d,n})$. However, when $n=2d+1$ we show that relations occur among the generators.

When $n=2d+1$ we are not able to completely calculate the relevant homotopy group. This is the reason that we can't currently extend all of our results to the $n=2d+1$ setting.

Directly analyzing the topology of the equal norm locus is difficult. Instead, we introduce an explicit unstable subset \(Z\) of the Grassmannian and show that the equal-norm locus is a strong deformation retract of \(\Grtil_{d,n}\setminus Z\).  The components of \(Z\) are Schubert varieties whose codimensions can be computed explicitly, allowing the low-dimensional relative topology of \((\Grtil_{d,n},\mathcal E_{d,n})\) to be determined by general-position arguments. Roughly speaking, the generators of $\pi_d(\Grtil_{d,n},\E_{d,n})$ come from ``meridians'' around the top dimensional pieces of $Z$. This meridional description of the generators allows us to identify the coordinates of the primary obstruction with classical characteristic classes.  In the stable range $n\geq 2d+2$ the generators can be realized as meridional disks around the zero locus of a section of the tautological bundle over $\Gr_{d,n}$, and hence correspond to the Euler class. In the critical dimension $n=2d$, additional codimension-$d$ strata appear. These can again be represented by meridional disks around the zero locus of a section of the orthogonal complement of the tautological bundle of $\Grtil_{d,n}$. The resulting obstruction theory is thus related to the Euler class of a complementary bundle, and ultimately to its top Stiefel–Whitney class.

\subsection{Relations to Previous Work}\label{sec:previous_work}

The present work is also closely related to \cite{BallasNeedhamShonkwiler2025}, where the authors study the existence of Parseval frame fields on vector bundles without imposing the equal-norm condition. In particular, the authors show that an orientable rank-$d$ bundle over a $k$-manifold admits a Parseval frame field of size $d+k$, and they investigate the sharpness of this bound. The present paper studies the additional topological obstructions arising from the equal-norm condition. The complementary-bundle viewpoint developed in \cite{BallasNeedhamShonkwiler2025} also plays an important role in our analysis of the critical case $n=2d$.

To the best of our knowledge, the only previous work attempting to construct equal norm Parseval frames on vector bundles is~\cite{freeman2012moving2}. The authors of that paper show that the tangent bundle of every odd-dimensional sphere admits an equal norm Parseval frame of some size.\footnote{Technically, they study unit norm tight frames, but up to uniform rescaling these are the same as equal norm Parseval frames.}  Their construction is fairly explicit and they are able to produce equal norm Parseval frames of size $(2n-1)2^{n-1}$ on the $(2n-1)$-sphere. Our results differ from theirs in several important ways. First, our result holds for arbitrary orientable rank-$d$ vector bundles over arbitrary $d$-manifolds. Second, the sizes of the frame fields constructed in \cite{freeman2012moving2} grow exponentially with the dimension of the sphere, whereas the frames in our results grow linearly in dimension. Since the 1- and 3-dimensional spheres are well known to be parallelizable, the first interesting case is the 5-sphere. The construction in \cite{freeman2012moving2} gives an equal norm Parseval frame field of size 20, whereas Corollary~\ref{cor:odd_case} guarantees the existence of an equal norm Parseval frame field of size 10. 

Our analysis of the equal-norm locus is also motivated by the work of Caine, Needham, and Shonkwiler \cite{caineNormalizingParsevalFrames2024,CaineNeedhamShonkwiler2025}, who use an energy functional and its gradient flow to study equal-norm frames. Their description of the unstable locus and the associated deformation by gradient flow inspires the approach of Section~\ref{sec:GIT}. We recast this picture intrinsically on the Grassmannian, where the unstable locus can be described using geometric invariant theory and its components identified as Schubert varieties.

The complementary-bundle questions arising in the critical range are also related to classical problems concerning vector bundles within a fixed stable equivalence class. Malyi \cite{Malyi1974}, for example, studied the possible Euler classes of fixed-rank bundles stably equivalent to a given bundle. His results, particularly in the metastable range, are closely related to the complementary-bundle phenomena that appear here.

\subsection{Outline}

We begin with Section \ref{sec:background} where we discuss background material related to finite frames in Hilbert spaces and obstruction theory. Section \ref{sec:vector_bundles} provides additional background related to orientable vector bundles. A main focus of this section is on the construction of vector bundles using classifying maps. It also introduces the philosophy that finding Parseval and equal norm Parseval frame fields is equivalent to deforming classifying maps to have image in certain subsets of the Grassmannian $\Grtil_d$ (see Lemma \ref{lem:classifying_map} and Proposition \ref{prop:homotope_to_equal_norm}). This section also includes background material related to the Euler class. 

Sections \ref{sec:GIT} and \ref{sec:topology_of_pair} are the technical heart of the paper. In these sections we study the topology of the pair $(\Grtil_{d,n},\E_{d,n})$. The first main result of this part is Theorem \ref{thm:homotop_equivalence}, which shows that there is a strong deformation retraction from $\Grtil_{d,n}\minus Z$ to $\E_{d,n}$, where $Z$ is a closed subset of $\Grtil_{d,n}$ obtained as a union of explicit Schubert varieties. The Schubert varieties comprising $Z$ end up being the unstable locus of a certain algebraic group action and the strong deformation retraction comes from applying Kempf--Ness theory to the gradient flow along a certain ``energy function.''

By identifying the codimension of the various pieces of $Z$, we are able to prove Theorem \ref{thm:d-1_connected_plus_generators} that establishes the $(d-1)$-connectivity of this pair. Subsequently, Lemma \ref{lem:enumerate_top_dim_pieces} identifies a natural generating set for $\pi_d(\Grtil_{d,n},\E_{d,n})$, and Proposition \ref{prop:homotopy_of_stiefel_pair} shows that when $n\neq 2d+1$ these generators freely generate $\pi_d(\Grtil_{d,n},\E_{d,n})$ as an abelian group. 

 Section \ref{sec:proof_of_main} uses the previous description of homotopy groups to explicitly describe the relevant obstructions for deforming classifying maps. Once these have been identified the proof of  Theorem \ref{thm:main_thm} quickly follows.
 
 Section \ref{sec:complementary_bundles} introduces complementary bundles and the importance of their Euler and Stiefel–Whitney classes. This section deals with two related problems: first, when one can find a Parseval frame field of size $d+k-1$ on a rank-$d$ bundle $E\to M$ over a $k$-dimensional manifold; and second, if $k=d$, when a Parseval frame of size $n=2d$ can be deformed (through Parseval frames) to be equal norm. We also derive a necessary Stiefel--Whitney condition for the existence of an equal norm Parseval frame of size $2d+1$. For the two deformation problems, the additional obstruction comes from the Euler class of a complementary bundle (see Propositions \ref{prop:euler_obstruction_for_parseval_frames}
 and \ref{prop:perp_has_trivial_euler_class}). We then prove complementary results (see Theorems \ref{thm:d+k-1_obstruction} and \ref{thm:2d_obstruction}) showing that one can find the desired Parseval frames when the corresponding Stiefel–Whitney class vanishes. This is done via surgery techniques that allow one to modify a complementary bundle and trivialize the corresponding Euler class. Putting this together proves Theorem \ref{thm:critical_dimension_theorem}. 

Finally, Section \ref{sec:examples} contains families of examples that show that the dimension restrictions in Theorem \ref{thm:main_thm} and Theorem 4.1 of \cite{BallasNeedhamShonkwiler2025} are sharp.

\subsection*{Declaration of AI usage}
Generative AI tools were used to assist with manuscript preparation, including editorial feedback, proofreading, and discussion of mathematical arguments. All mathematical results and proofs were developed, evaluated, and independently verified by the authors, who retain full intellectual responsibility for the work.

\section{Background}\label{sec:background}
\subsection{Finite frames}
In this section we provide some background on finite frames in Hilbert spaces. All of this material is standard and can be found in, e.g.,  \cite{CasazzaKutyniok2013,waldron2018introduction}. Let $V$ be a finite-dimensional real Hilbert space. A collection $\F=\{v_1,\ldots,v_n\}$ is called a \emph{frame} if it spans $V$. A frame $\mathcal{F}$ gives rise to a \emph{analysis operator} $A_\F:V\to \RR^n$ given by the formula
\[
A_\F(v)=(\langle v,v_i\rangle)_{i=1}^n.
\]
In words, the analysis operator pairs $v$ with each of the vectors in $\F$ and records the results in an $n$-tuple. The adjoint of the analysis operator $A_{\F}^\ast:\RR^n\to V$ is called the \emph{synthesis operator} and is given by the formula
\[
(c_i)_{i=1}^n\mapsto \sum_{i=1}^nc_iv_i.
\]
These operators compose to give the \emph{frame operator} $A_\F^\ast A_\F:V\to V$. A special class of frames are the  \emph{tight frames}, when $A_\F^\ast A_\F=\lambda I$ for some $\lambda > 0$. Tight frames with $\lambda = 1$ are called \emph{Parseval frames}. A fundamental result in frame theory due to Han and Larson \cite{han2000frames} shows that orthogonally projecting an orthonormal basis onto a subspace gives a Parseval frame, and conversely, every Parseval frame arises from this construction.

In the context of signal processing, we can think of the frame vectors as \emph{sensors} and the analysis operator as collecting a series of measurements. In this setting Parseval frames have a simple strategy for recovering a vector from its collection of measurements, namely $v=\sum_{i=1}^n\langle v,v_i\rangle v_i$. As real-world measurements are subject to noise, it is useful for one to record a  highly redundant set of measurements, i.e., for $n \gg \mathrm{dim}(V)$, which can mitigate recovery errors. It is well known that Parseval frames provide optimally accurate signal recovery under additive measurement noise models~\cite{goyal2001quantized}.

Another important class of frames are the \emph{equal norm} frames defined by the property that $\|v_i\|=\|v_j\|$ for each pair $1\leq i,j\leq n$.  We will primarily be interested in frames that are both Parseval and equal norm. The entries of the \emph{Gram matrix} $A_\F A_\F^\ast$ record the pairwise inner products of the frame vectors. In particular the diagonal entries record the squared norms of the frame vectors, so equal norm frames will have Gram matrices with constant diagonal. Since $A_\F A_\F^\ast$ has the same trace as the frame operator $A_\F^\ast A_\F$, we see that the equal norm Parseval condition forces $\|v_i\|^2=\frac{\dim(V)}{n}$, so the equal norm value is completely determined. 

Another way of thinking of the frame operator is as 
\[
A_\F^\ast A_\F =\sum_{i=1}^n v_iv_i^\ast.
\]

As such, the frame operator is a weighted sum of orthogonal projection operators onto the $v_i$ weighted by the norm squared of $v_i$. With this perspective a Parseval frame corresponds to a decomposition of the identity into a weighted sum of projection operators and an equal norm Parseval frame represents the most democratic possible choice of weights. 

Given $v,w\in V$ and a Parseval frame $\F$ we have 
\[
\langle v,w\rangle=\langle A_\F^\ast A_\F v,w\rangle=\langle A_\F v,A_\F w\rangle. 
\]
Hence the Parseval condition is equivalent to the analysis operator being an isometric embedding. If we fix an orthonormal basis $\{e_1,\ldots, e_d\}$ of $V$ then $\{A_\F e_1,\ldots, A_\F e_d\}$ is an ordered orthonormal set in $\RR^n$. Thus, we have an identification of the space of Parseval frames of size $n$ in a $d$-dimensional Hilbert space with the \emph{Stiefel manifold} $\St_{d,n}$ consisting of ordered orthonormal sets in $\RR^n$ of size $d$. Under this identification, we denote by $\E^S_{d,n} \subset \St_{d,n}$ the set corresponding to equal norm Parseval frames (with the superscript $S$ reminding us that this is a subset of the Stiefel manifold).

Closely related to the Stiefel manifold is the \emph{Grassmannian} $\Grtil^u_{d,n}$ of $d$-dimensional subspaces in $\RR^n$; we use the superscript $u$ to emphasize that this is the \emph{unoriented Grassmannian}, as opposed to the \emph{oriented Grassmannian} $\Grtil_{d,n}$ that will be introduced below. There is a natural map $\St_{d,n}\to \Grtil^u_{d,n}$ obtained by mapping a collection of $d$ orthonormal vectors to their span. This map is well known to be a principal $\O(d)$-bundle. The action of $B\in \O(d)$ is given by precomposing the analysis operator with $B$. Since both $B$ and the analysis operator are isometric, it follows that $\E^S_{d,n}$ is $\O(d)$-invariant. Since the action is free we get another principal $\O(d)$ bundle $\E^S_{d,n}\to \E^u_{d,n}$, where $\E^u_{d,n}\subset \Grtil^u_{d,n}$ is the image under projection. Furthermore, the Parseval frames $\F$ and $\F\cdot B$ have the same Gram matrix.  It is a standard linear algebraic fact that the Gram matrix is the coordinate representation of the orthogonal projection operator from $\RR^n$ onto $Im(A_\F)$. Thus, if $\F$ and $\F'$ have the same Gram matrix then their analysis operators must have the same image and hence must differ by precomposition with some $B\in \O(d)$. As a consequence, we can regard elements of $\Grtil^u_{d,n}$ as equivalence classes of Parseval frames with the same Gram matrix. One useful consequence of this observation is that it provides a more concrete description of $\E^u_{d,n}$. If $F\in \Grtil^u_{d,n}$ is a $d$-plane then $F\in \E^u_{d,n}$ if and only if the diagonal entries of the orthogonal projection matrix $P_F$ are all equal to $\frac{d}{n}$.

More relevant for our purposes is the closely related \emph{oriented Grassmannian} $\Grtil_{d,n}$ consisting of $d$-planes in $\RR^n$ along with a choice of orientation. An ordered orthonormal set defines an orientation on its span and so we also get a map $\St_{d,n}\to \Grtil_{d,n}$. This map is an $\SO(d)$ principal bundle and elements of $\Grtil_{d,n}$ can be thought of as compatibly oriented Parseval frames with the same Gram matrix. There is a double cover $\Grtil_{d,n}\to \Grtil^u_{d,n}$ whose projection comes from forgetting the orientation. We denote $\E_{d,n}$ as the preimage of $\E^u_{d,n}$ under this covering map. Elements of $\E_{d,n}$ are equivalence classes of compatibly oriented \emph{equal norm} Parseval frames with the same Gram matrix, and one can again decide whether $F\in \Grtil_{d,n}$ is contained in $\E_{d,n}$ by examining the diagonal entries of the corresponding orthogonal projection matrix. 

\subsection{Obstruction Theory}\label{sec:obstruction_theory}
In this section we will give an overview of obstruction theory in the context of our problem. The background material here can be found in \cite[Ch 8]{Steenrod_Fibre_Bundles} or \cite[\S 4]{Hatcher_Algebraic_Topology}. Roughly speaking, we are concerned with problems of the following flavor. Let $(X,A)$ be a pair of topological spaces. Suppose we have a map $f:Y\to X$. Is it possible to homotope $f$ to have image in $A$?

Suppose that $Y$ is equipped with a (finite) CW structure. Explicitly, we have a filtration 
\[
\emptyset=Y^{(-1)}\subset Y^{(0)}\subset \cdots \subset Y^{(k)}=Y.
\]
Each stage is called a \emph{skeleton} and is obtained inductively from the previous one by attaching a collection of disks along their boundary. More precisely, $Y^{(0)}$ is a collection of points. Given $Y^{(i-1)}$, we form $Y^{(i)}$ by attaching a collection of $i$-disks by gluing the bounding $(i-1)$-sphere to $Y^{(i-1)}$. Once $Y$ has been constructed, we can regard each $i$-cell as a map of pairs $(\Delta^i,S^{i-1})\to (Y^{(i)},Y^{(i-1)})$.

Next, we recall the definition of relative homotopy groups. Given a pair of topological spaces $(X,A)$ we can consider the collection of homotopy classes of maps of triples $f:(\Delta^k,S^{k-1},\ast)\to (X,A,p)$, where $p\in A$. For  $k\geq2$, these homotopy classes form the relative homotopy group $\pi_k(X,A)$; for $k=1$ they form a pointed set. In the group case, the identity element consists of maps of triples that can be homotoped to have image in $A$. Another way to think about this is that an element of $\pi_k(X,A)$ is the same as an element $g\in \pi_{k-1}(A)$ along with a trivializing disk living in $X$. 

The spirit of obstruction theory is to try to understand the desired deformation problem one skeleton at a time. We will assume that $(X,A)$ is a CW pair, that $A$ is simply connected, and that the pair $(X,A)$ is $(d-1)$-connected in the sense that $\pi_k(X,A)=0$ for $k\leq d-1$, for $d > 2$; these assumptions are satisfied by the spaces we consider later in the paper. Consider the identity map of pairs $\Id:(X,A)\to(X,A)$. Throughout the following construction, we keep \(A\) pointwise fixed. By the connectivity hypothesis and the homotopy extension property for CW pairs, we can successively deform $\Id$, relative to $A$, so that its $(d-1)$-skeleton has image in $A$. Since we have homotoped the 0-skeleton into $A$, we can regard each $1$-cell as a map $(\Delta^1,S^0)\to (X,A)$. It hence defines an element of $\pi_1(X,A)$. Since $\pi_1(X,A)=0$ we can deform this cell to have image in $A$. Repeating this construction we can homotope $\Id$ so that its 1-skeleton has image in $A$. By repeatedly using this idea we can homotope $\Id$ so that its $(d-1)$-skeleton has image in $A$. When we start trying to deform the $d$-skeleton we encounter an honest obstruction. The $d$-cells of $X$ again give elements of $\pi_d(X,A)$, but our hypotheses no longer require this group to vanish, so these elements may be nonzero. In this way we have an assignment of an element of $\pi_d(X,A)$ to each $d$-cell of $X$. Any cell whose image is already in $A$ gives the trivial element of $\pi_d(X,A)$. In other words, we have constructed a relative cellular cochain with coefficients in $\pi_d(X,A)$. We denote this element as $o_{(X,A)}\in C^d(X,A;\pi_d(X,A))$. It is a classical fact that $o_{(X,A)}$ is a cocycle and hence defines an element $[o_{(X,A)}]\in H^d(X,A;\pi_d(X,A))$. This element is called the \emph{primary obstruction class}. Furthermore, $[o_{(X,A)}]=0$ if and only if $\Id$ can be deformed, relative to $A$, to a map carrying $X^{(d)}$ into $A$. Here vanishing of the cohomology class permits us to modify the deformation already chosen on the $(d-1)$-skeleton.

This procedure can be continued to produce further obstructions to deforming higher skeleta to have image in $A$, however, at this point the procedure becomes non-canonical because the obstructions at a particular stage depend on the choice of how lower skeleta were deformed into $A$. Fortunately, as we will see, in our cases of interest we will only need to consider primary obstructions. 

\begin{remark}
    It is possible to replicate the constructions above without the need to have a CW structure by using the technology of Postnikov towers, however, for those unfamiliar with these ideas it is often useful to frame obstruction theory in its classical CW context. Details about Postnikov towers and relations to obstruction theory can be found in \cite{Hatcher_Algebraic_Topology}.
\end{remark}

\section{Vector bundles}\label{sec:vector_bundles}

Let $\GL^+(d)$ be the group of orientation preserving automorphisms of $\RR^d$. This group admits a canonical faithful action on $\RR^d$. An orientable rank-$d$ vector bundle over a $k$-dimensional manifold is an $\RR^d$ bundle over $M$ with structure group $\GL^+(d)$. Material regarding vector bundles, classifying spaces, and tautological bundles is classical and can be found in \cite{Husemoller1994}.

Such bundles are classified by the \emph{stable oriented Grassmannian} $\Gr_{d}=\varinjlim_n\Gr_{d,n}$. Intuitively this space can be viewed as the space of oriented $d$-dimensional subspaces in a (countably) infinite dimensional real vector space. There is a canonical orientable rank-$d$ vector bundle, $\tau_d$, over $\Gr_d$. For each $P\in \Gr_d$ the fiber over $P$ is equal to $P$. This bundle is called the \emph{tautological bundle}. Given a smooth map $f:M\to \Gr_d$ we get a vector bundle $f^\ast(\tau_d)$ via pullback. It is a standard fact that every orientable rank-$d$ bundle over $M$ arises from this construction and homotopic maps give rise to isomorphic bundles. For this reason the (homotopy class of) map is called the \emph{classifying map} for the bundle. Each $\Gr_{d,n}$ is included in $\Gr_{d}$ and pulling back $\tau_d$ via the inclusion gives the tautological orientable rank-$d$ bundle, $\tau_{d,n}$ over $\Gr_{d,n}$. 

\subsection{Frame fields on vector bundles}\label{sec:frames_on_bundles}

Let $M$ be a smooth $k$-dimensional manifold and let $E\to M$ be an orientable rank-$d$ vector bundle. A collection $\{\sigma_1,\ldots,\sigma_n\}$ of sections of $E\to M$ is a \emph{frame field of size $n$} if, for each $p\in M$, $\{\sigma_1(p),\ldots \sigma_n(p)\}$ is a frame in the fiber $E_p$ over $p$. Next, assume that $E\to M$ is equipped with a smoothly varying fiberwise inner product. One can then impose the Parseval (resp.\ equal norm) condition pointwise and this gives rise to the notion of a frame field being Parseval (resp.\ equal norm).

In the orientable setting we have the following existence result for Parseval frame fields.

\begin{theorem}[\!\!{\cite[Thm 4.1]{BallasNeedhamShonkwiler2025}}]\label{thm:parseval_frames_exist}
Let $E\to M$ be an orientable, rank-$d$ vector bundle over a $k$-dimensional manifold $M$, equipped with a smoothly varying fiberwise inner product. For each $n\geq k+d$ there is a Parseval frame field of size $n$ on $E$.
\end{theorem}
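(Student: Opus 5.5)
The statement is the result of \cite{BallasNeedhamShonkwiler2025}. We prove it by working directly with the equivalent condition of an isometric bundle embedding. By the discussion in the background section, a Parseval frame field of size $n$ on $E$ is the same as a fiberwise isometric linear embedding $A:E\to M\times\RR^n$. We then set $\sigma_i(p)=A_p^\ast(e_i)$, so that $\sigma_i$ is the orthogonal projection of $e_i$ onto $A_p(E_p)$, pulled back to $E_p$. Since $A_p^\ast A_p=\Id$, this collection is Parseval. The task therefore reduces to producing a bundle monomorphism $E\hookrightarrow \epsilon^n$; Gram–Schmidt, applied fiberwise using the inner product on $E$, then upgrades any injective bundle map to an isometric one. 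It also suffices to treat $n=k+d$, since for larger $n$ we compose with the standard inclusion $\RR^{k+d}\subset\RR^n$.

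To build the injective bundle map, we first use compactness of $M$, or a finite good cover together with partitions of unity, to obtain some injective bundle map $E\to\epsilon^N$ for large $N$. This gives a classifying map $f:M\to\Gr_{d,N}$. We then want to compress $N$ down to $k+d$. Equivalently, we want to homotope $f$ into $\Gr_{d,k+d}\subset \Gr_{d,N}$, which yields an isomorphic bundle; an isomorphism can be chosen orientation preserving because $E$ is oriented.

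The standard method is a general-position projection. Suppose $E\subset\epsilon^N$ with $N>k+d$. We look for a vector $v\in\RR^N\setminus 0$ that never lies in any fiber $E_p$. The set of bad directions is the image of the projectivized bundle $\mathbb{P}(E)\to\mathbb{RP}^{N-1}$, which has dimension $k+d-1$. Whenever $k+d-1<N-1$, that is $N>k+d$, we can choose $[v]$ outside this image by Sard's theorem, after smoothing the embedding. Orthogonal projection onto $v^\perp\cong\RR^{N-1}$ is then injective on every fiber. Iterating this down to $N=k+d$ gives the desired monomorphism.

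I expect the main obstacle to be the final step, where $N=k+d$. In that range the dimension count $k+d-1<N-1$ still holds, so the argument goes through. The care needed is in smoothing the continuous embedding and applying Sard's theorem to the map $\mathbb{P}(E)\to\mathbb{RP}^{N-1}$. Orientability is not even needed for the existence of the monomorphism. Finally, the resulting frame field, obtained as the projections of the $e_i$, is automatically continuous.
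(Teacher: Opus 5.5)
Your argument is correct, but it takes a different route from the one the paper has in mind. The paper gives no proof here and imports the result from \cite{BallasNeedhamShonkwiler2025}. In the paper's own language (Lemma~\ref{lem:classifying_map}), the statement is a compression problem for a classifying map $M\to\Gr_d$, and connectivity solves it. Since $\St_{d,n}$ is $(n-d-1)$-connected, the pair $(\Gr_d,\Gr_{d,n})$ is $(n-d)$-connected, and cellular approximation (equivalently, obstruction theory) pushes the classifying map of a bundle over a $k$-dimensional base into $\Gr_{d,n}$ once $n\geq d+k$.

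You instead build the monomorphism $E\hookrightarrow\epsilon^{d+k}$ directly with the Whitney-style projection trick. This is more elementary and explicit, and it makes clear that orientability is irrelevant. It still fits the classifying-map picture, because each projection step is realized by the homotopy $I-t\,vv^{T}/\abs{v}^2$, $t\in[0,1]$, which stays injective on every fiber. The cost is that you need smooth structures (for Sard) and an initial embedding into some $\epsilon^N$. That is harmless for closed $M$; for noncompact $M$ you should use a cover by $k+1$ trivializing open sets rather than a finite good cover, which need not exist. The connectivity argument, by contrast, works verbatim over any $k$-dimensional CW complex, and it is the form the rest of the paper's obstruction theory builds on.

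Two small repairs are needed. First, fiberwise Gram--Schmidt depends on an ordered local frame, so it does not obviously glue to a global isometric embedding. Use the canonical polar part $U=T(T^\ast T)^{-1/2}$ instead, as in the discussion preceding Lemma~\ref{lem:classifying_map}. Second, your comment on the final step is mis-indexed. At $N=k+d$ the inequality $k+d-1<N-1$ fails, so no projection out of $\RR^{k+d}$ is possible. The last projection goes from $N=k+d+1$ to $k+d$, where $k+d-1<k+d$ holds, so your iteration ``whenever $N>k+d$'' already stops exactly where it should.
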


Next, we show that there is a relationship between classifying maps and Parseval frame fields. Suppose one has a Parseval frame field $\mathcal{C}=\{\sigma_1,\ldots, \sigma_n\}$ on $E$. For each point $p\in M$, the frame field provides an analysis operator from $E_p\to \RR^n$. This map is always injective and hence its image defines a point in $\Gr^u_{d,n}$. There are 2 choices of orientation on the image and a point in $\Grtil_{d,n}$ is specified by selecting the orientation that agrees with the orientation on $E_p$ when pulled back via the analysis map. In other words, a Parseval frame field of size $n$ determines a map $f_{\mathcal{C}}:M\to \Grtil_{d,n}\subset \Grtil_d$. 

There is also a complementary construction going in the other direction. There are $n$ canonical sections of the tautological bundle $\tau_{d,n}$. For $1\leq i \leq n$ we define $s_i(P)$ to be the orthogonal projection of the $i$th standard basis vector of $\RR^n$ onto $P$. Next, suppose that $f:M\to \Grtil_d$ is a classifying map for $E$, and suppose further that $f$ can be homotoped to have image in $\Grtil_{d,n}$; we continue to denote the resulting map $M\to\Grtil_{d,n}$ by $f$. An orientation-preserving bundle isomorphism $T:E\to f^\ast\tau_{d,n}$ gives rise to an isomorphism $U = T(T^\ast T)^{-1/2}$ which is an isometry on each fiber, where the pullback carries the metric induced from $\mathbb R^n$. Using this isometry to identify the bundles, the pulled-back sections
\[
\mathcal C_f=\{U^{-1}(f^\ast s_1),\ldots,U^{-1}(f^\ast s_n)\}
\]
give a Parseval frame field of size $n$ on $E$. The fact that this frame is Parseval comes from the Han–Larson dilation theorem \cite{han2000frames} which says that Parseval frames come from orthogonally projecting an orthonormal basis onto a subspace.   
The following lemma describes how these constructions are related to one another.

\begin{lemma}\label{lem:classifying_map}
Let $E\to M$ be an oriented rank-$d$ vector bundle with a fiberwise inner product.
\begin{itemize}
 \item If $f:M\to \Grtil_{d,n}$ is a classifying map for $E$ then $\mathcal{C}_f$ is a Parseval frame field.
    \item Let $\mathcal{C}=\{\sigma_1,\ldots, \sigma_n\}$ be a Parseval frame field on $E$. Then the map $f_{\mathcal{C}}:M\to \Grtil_{d,n}$ described above is a classifying map for $E$. 
    \end{itemize}
\end{lemma}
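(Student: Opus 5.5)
Both parts come down to fiberwise linear algebra, plus one check that the relevant maps are continuous bundle maps.

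\textbf{First item.} I will fix a point $p\in M$ and set $P=f(p)\in\Grtil_{d,n}$. The fiber of $f^\ast\tau_{d,n}$ over $p$ is $P$ itself, with the inner product inherited from $\RR^n$. The vectors $s_i(P)$ are the orthogonal projections of the standard basis $e_1,\ldots,e_n$ onto $P$. By the Han--Larson theorem, or directly because
\[
\sum_i \langle v,\pi_P e_i\rangle \pi_P e_i=\pi_P\sum_i\langle v,e_i\rangle e_i=v \quad\text{for } v\in P,
\]
these vectors form a Parseval frame for $P$. The map $U$ is a continuous fiberwise isometry, since $T^\ast T$ is a continuous field of positive definite operators and $(T^\ast T)^{-1/2}$ depends continuously on it. The Parseval identity is preserved under isometries, so $\{U^{-1}s_i(P)\}$ is a Parseval frame for $E_p$. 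These sections are continuous because the $s_i$ are continuous on $\Grtil_{d,n}$.

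\textbf{Second item.} Given a Parseval frame field $\mathcal C$, the analysis operator $A_p:E_p\to\RR^n$ is an isometric embedding for each $p$. Its image is $f_{\mathcal C}(p)$, oriented so that $A_p$ preserves orientation. I then define
\[
\Phi:E\to f_{\mathcal C}^\ast\tau_{d,n},\qquad v\in E_p\mapsto (p,A_p v).
\]
This map is fiberwise a linear, orientation-preserving isomorphism onto the fiber $f_{\mathcal C}(p)$. It is continuous because the $\sigma_i$ and the inner product are continuous. A continuous fiberwise isomorphism of vector bundles over the same base is a bundle isomorphism. Hence $E\cong f_{\mathcal C}^\ast\tau_{d,n}$, and composing with the inclusion $\Grtil_{d,n}\hookrightarrow\Grtil_d$ shows that $f_{\mathcal C}$ is a classifying map.

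\textbf{Main obstacle.} The only delicate step is checking that $f_{\mathcal C}$ is continuous as a map to the oriented Grassmannian. I will handle this in two stages:
\begin{itemize}
\item Locally, I choose a continuous orthonormal frame $e_1(p),\ldots,e_d(p)$ of $E$ compatible with its orientation. Then $p\mapsto(A_pe_1(p),\ldots,A_pe_d(p))$ is a continuous map into $\St_{d,n}$.
\item Composing with the projection $\St_{d,n}\to\Grtil_{d,n}$ gives $f_{\mathcal C}$ locally, so $f_{\mathcal C}$ is continuous. Changing the local frame acts by $\SO(d)$, so these local descriptions agree on overlaps.
\end{itemize}
Finally, I will note that $\mathcal C_{f_{\mathcal C}}$ recovers $\mathcal C$, since $A_p^\ast e_i=\sigma_i(p)$. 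This is not needed for the statement, but it confirms that the two constructions are mutually inverse.
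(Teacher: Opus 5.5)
Your proposal is correct and follows the same route as the paper. For the first item, you project the standard basis onto $P$ and transport the resulting Parseval frame to $E$ by the fiberwise isometry $U$. For the second, you use the analysis operators as a continuous, fiberwise orientation-preserving isomorphism $E\to f_{\mathcal C}^\ast\tau_{d,n}$. Your direct verification of the Parseval identity, in place of citing Han--Larson, and your local Stiefel-manifold argument for the continuity of $f_{\mathcal C}$ fill in details that the paper only states briefly.
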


\begin{proof}
The first point follows from the discussion preceding the lemma.

Since the sections and the fiberwise inner product are continuous, the corresponding analysis operators vary continuously, so $f=f_{\mathcal C}$ is continuous. Let $p\in M$ and $A_p:E_p\to \RR^n$ be the corresponding analysis operator. Since the frame is Parseval this gives an isometric embedding of $E_p$ into $\RR^n$. By construction of the pullback, $f^\ast(\tau_{d,n})_p=\operatorname{Im}(A_p)\subset \RR^n$. 

It follows that the analysis operator gives a fiberwise orientation-preserving isomorphism $A_p:E_p\to f^\ast(\tau_{d,n})_p$. Since the analysis operators vary continuously these maps piece together to give an isomorphism of vector bundles, and so $f^\ast(\tau_{d,n})\cong E$. 
\end{proof}

In light of Lemma \ref{lem:classifying_map} we will often speak about classifying maps coming from a certain Parseval frame. 

Recall from Section \ref{sec:background}, that elements of $\E_{d,n}$ correspond to $\SO(d)$-orbits of equal norm Parseval frames. Therefore, if one applies the above classifying map construction to an equal norm Parseval frame field, then one gets a classifying map $f$ whose image lives in $\E_{d,n}$. If the original Parseval frame is equal norm, then its corresponding map has image in $\E_{d,n}$. Conversely, given a classifying map $f:M\to \E_{d,n}$, the corresponding Parseval frame field $\mathcal{C}_f$ is equal norm.  The discussion above is summarized in the following proposition.

\begin{proposition}\label{prop:homotope_to_equal_norm}
The bundle $E\to M$ admits an equal norm Parseval frame field of size $n$ if and only if the classifying map $f:M\to \Grtil_{d}$ for $E$ can be homotoped to have image in $\E_{d,n}$.
\end{proposition}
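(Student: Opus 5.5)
Both implications will be proved by passing between frame fields and classifying maps with Lemma \ref{lem:classifying_map}, and then tracking the equal norm condition through the identification of $\E_{d,n}$ with diagonal entries of projection matrices.

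\emph{Forward direction.} Suppose $\mathcal C=\{\sigma_1,\ldots,\sigma_n\}$ is an equal norm Parseval frame field. By Lemma \ref{lem:classifying_map}, $f_{\mathcal C}:M\to\Grtil_{d,n}$ is a classifying map for $E$. At each $p$, the image $F=\operatorname{Im}(A_p)$ has orthogonal projection matrix $P_F$ equal to the Gram matrix $A_pA_p^\ast$. Its diagonal entries are $\|\sigma_i(p)\|^2$, which all equal $d/n$. Hence $f_{\mathcal C}(p)\in\E_{d,n}$ for every $p$.

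Any classifying map $f:M\to\Grtil_d$ for $E$ is homotopic to $\iota\circ f_{\mathcal C}$, where $\iota:\Grtil_{d,n}\hookrightarrow\Grtil_d$ is the inclusion, because classifying maps of isomorphic oriented bundles are homotopic. This is the uniqueness half of the classification theorem. So $f$ can be homotoped to have image in $\E_{d,n}$.

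\emph{Reverse direction.} Suppose $f$ is homotopic to a map $g$ with $g(M)\subset\E_{d,n}\subset\Grtil_{d,n}$. Homotopic maps pull back isomorphic bundles, so $g^\ast\tau_{d,n}=g^\ast\tau_d\cong f^\ast\tau_d\cong E$ as oriented bundles. Thus $g$ is a classifying map for $E$ with image in $\Grtil_{d,n}$, and Lemma \ref{lem:classifying_map} gives a Parseval frame field $\mathcal C_g=\{U^{-1}(g^\ast s_i)\}$.

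To see that $\mathcal C_g$ is equal norm, note that $U$ is a fiberwise isometry. Hence $\|U^{-1}(g^\ast s_i)(p)\|=\|s_i(g(p))\|$, the length of the projection of $e_i$ onto the plane $F=g(p)$. This length squared is $\langle P_Fe_i,e_i\rangle=(P_F)_{ii}$, which equals $d/n$ because $F\in\E_{d,n}$. So all frame vectors have norm $\sqrt{d/n}$ at every point.

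The only step needing care is the use of the uniqueness of classifying maps up to homotopy in the forward direction, together with making sure orientations match. This is handled by choosing orientations on the images so that the analysis operators are orientation preserving, as in the construction of $f_{\mathcal C}$. With that in place, the rest is direct bookkeeping.
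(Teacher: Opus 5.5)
Your proposal is correct and takes the paper's route. You pass between frame fields and classifying maps via Lemma \ref{lem:classifying_map} and read off the equal norm condition from the diagonal entries $(P_F)_{ii}=\|P_Fe_i\|^2=d/n$, while also spelling out two steps the paper leaves implicit: the given classifying map is homotopic to $f_{\mathcal C}$ by uniqueness of classifying maps up to homotopy, and $\mathcal C_g$ is equal norm because $U$ is a fiberwise isometry.
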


\begin{proof}
    Homotoping a classifying map does not change the isomorphism class of its pullback, so if the classifying map can be homotoped into $\E_{d,n}$ then we can construct an equal norm Parseval frame of size $n$ on $E$. Conversely, if $E$ admits an equal norm Parseval frame then the construction above provides a classifying map $f:M\to \Gr_{d,n}$ whose image is contained in $\E_{d,n}$.
\end{proof}

\subsection{The Euler Class}\label{sec:eulerclass}

In this section, we describe the Euler class. The Thom class, Euler class, and their relationship via Poincar\'e duality are classical (see for instance \cite[Ch 11]{milnor1974characteristic} or \cite[Ch VI]{BottTu1982}). We will describe the class in two different ways. The first is motivated by thinking of the Euler class as an obstruction to finding non-zero sections of vector bundles and the second comes from applying Poincar\'e duality. Both of these descriptions and their relation will become particularly important in the proof of the main theorems in Sections \ref{sec:proof_of_main} and \ref{sec:complementary_bundles}. Consider an oriented rank-$d$ vector bundle $E$ over a $k$-dimensional manifold $M$. Consider the following sequence of pairs of fibrations
\[
(\RR^d,\RR^d\minus 0)\to (E,E\minus M)\to (M,M). 
\]

Here we are identifying $M$ with the zero section in $E$. Taking the corresponding long exact sequence, we find that $\pi_j(\RR^d,\RR^d\minus 0)\cong \pi_j(E,E\minus M)$ for each $j$. The pair $(\RR^d,\RR^d\minus 0)$ is well known to be $(d-1)$-connected and has $\pi_d(\RR^d,\RR^d\minus 0)\cong \ZZ$. A generator for $\pi_d(\RR^d,\RR^d\minus 0)$ corresponds to an orientation on $\RR^d$ and can be represented by a small disk centered at the origin. Similarly, a generator for $\pi_d(E,E\minus M)$ can be represented by a small disk centered at the origin in one's favorite fiber.

Since \(E\) is an oriented rank-\(d\) vector bundle, the Thom isomorphism gives
\[
H^d(E,E\setminus M;\mathbb Z)\cong H^0(M;\mathbb Z)\cong\mathbb Z.
\]
The Thom class $u_E$ is the distinguished generator whose restriction to each fiber is the positive generator of \(H^d(\mathbb R^d,\mathbb R^d\setminus\{0\};\mathbb Z)\). 

Given a section $\sigma:M\to E$, we can assume, by applying a small perturbation if necessary, that $\sigma$ is transverse to the zero section of $E$. We get a relative map $\sigma:(M,M\minus Z_\sigma)\to (E,E\minus M)$, where $Z_\sigma$ is the set of zeros of $\sigma$. By the transversality assumptions, $Z_\sigma$ is a smooth codimension $d$ submanifold. Pulling back gives $\sigma^\ast(u_E)\in H^d(M,M\minus Z_\sigma;\ZZ)$. 

The long exact sequence of the pair $(M,M\minus Z_\sigma)$ gives a natural map
\[
H^d(M,M\minus Z_\sigma;\ZZ)\to H^d(M;\ZZ).
\]
The image of $\sigma^\ast(u_E)$ under this map is the \emph{Euler class} $e(E)$. Any two choices of section are homotopic via a ``straight line'' homotopy, and so the corresponding absolute class is independent of the choice of section. This class provides an obstruction to the existence of \emph{any} nowhere zero section of $E$. 

There is a dual perspective that will be useful for our purposes. More detail regarding this perspective can be found in \cite{BottTu1982}. Let $\sigma:M\to E$ be a section transverse to the zero section, with zero set $Z_\sigma$. Since $M$ is closed and $\operatorname{rank}(E)=\dim M=d$, the set $Z_\sigma$ is finite. The orientations of $M$ and $E$ determine a signed zero-cycle
\[
[Z_\sigma]=\sum_{p\in Z_\sigma}\operatorname{ind}_p(\sigma)[p]\in H_0(Z_\sigma;\ZZ).
\]
Its relative Poincar\'e dual lies in
\[
H^d(M,M\minus Z_\sigma;\ZZ)
\cong \bigoplus_{p\in Z_\sigma}\ZZ
\]
and is characterized by its evaluation on a small positively oriented disk around each zero $p$, which equals $\operatorname{ind}_p(\sigma)$. Its image in $H^d(M;\ZZ)$ is the Euler class $e(E)$. The discussion above and the proposition below are standard (see \cite{milnor1974characteristic}, Chapter 11, or \cite{BottTu1982}, Chapter VI), but we include a proof for completeness because it is central to the argument. More generally, when the rank of \(E\) is smaller than the dimension of \(M\), the zero locus of a transverse section is a submanifold of codimension \(d\), and the same Thom-class description of its Poincaré dual applies.

\begin{proposition}\label{prop:poincare_dual_euler}
    Let $E\to M$ be an oriented rank-$d$ vector bundle and let $\sigma:M\to E$ be a section transverse to the zero section. Then
$$
PD([Z_\sigma])=\sigma^\ast(u_E)
\in H^d(M,M\minus Z_\sigma;\ZZ).
$$
Furthermore, for any $A\subset M\minus Z_\sigma$, the images of these classes in $H^d(M,A;\ZZ)$ agree. In particular, taking $A=\emptyset$ gives the Euler class $e(E)$.
\end{proposition}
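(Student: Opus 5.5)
The plan is to reduce both classes, by excision and the Thom isomorphism, to classes on a tubular neighborhood of $Z_\sigma$, and then compare them on a single small normal disk at one point of each component of $Z_\sigma$. Write $k=\dim M$. By transversality, $Z_\sigma$ is a closed codimension-$d$ submanifold. The derivative of $\sigma$ in the fiber direction gives a bundle isomorphism
\[
\nu(Z_\sigma)\to E|_{Z_\sigma},
\]
where $\nu(Z_\sigma)$ is the normal bundle. Orient $Z_\sigma$ (and hence define $[Z_\sigma]$) so that this isomorphism is orientation preserving with respect to the orientation of $E$ and the orientation of $\nu(Z_\sigma)$ coming from $M$ and $Z_\sigma$. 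This is the convention implicit in the statement; in the case $k=d$ it reproduces the signed count $\sum \operatorname{ind}_p(\sigma)[p]$.

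\textbf{Step 1.} Choose a tubular neighborhood $N\cong\nu(Z_\sigma)$ of $Z_\sigma$. Excision and the Thom isomorphism for $\nu(Z_\sigma)$ give
\[
H^d(M,M\minus Z_\sigma;\ZZ)\cong H^d(N,N\minus Z_\sigma;\ZZ)\cong H^0(Z_\sigma;\ZZ).
\]
So a class in $H^d(M,M\minus Z_\sigma;\ZZ)$ is determined by its values on small normal $d$-disks $(D,\partial D)\to (M,M\minus Z_\sigma)$, one through a point of each component of $Z_\sigma$. For the relative Poincar\'e dual I use the standard characterization: $PD([Z_\sigma])$ corresponds under this isomorphism to the Thom class of $\nu(Z_\sigma)$. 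Equivalently, it evaluates to $+1$ on each positively oriented normal disk. This is the same characterization the paper already used for points.

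\textbf{Step 2.} Evaluate $\sigma^\ast(u_E)$ on a normal disk $D$ centered at $z\in Z_\sigma$. By naturality, this value is the value of $u_E$ on the map of pairs
\[
\sigma|_D:(D,\partial D)\to (E,E\minus M).
\]
Trivialize $E$ over $D$, so that $(E|_D,E|_D\minus D)\simeq D\times(\RR^d,\RR^d\minus 0)$. Then the class of $\sigma|_D$ is the local degree at $0$ of the map $D\to\RR^d$ obtained by projecting $\sigma$ to the fiber. Because $\sigma$ is transverse to the zero section, this map has derivative at $z$ equal to the normal derivative $\nu_z\to E_z$. That derivative is an isomorphism, and it is orientation preserving by the choice of orientation of $Z_\sigma$. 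To make this precise I would use the linear homotopy
\[
\sigma_t(v)=t^{-1}\sigma(tv),\qquad t\in(0,1],
\]
extended at $t=0$ by the derivative. For $D$ small, $\sigma_t$ stays nonzero on $\partial D$, so it is a homotopy of maps of pairs. Hence the local degree is $+1$, which matches $PD([Z_\sigma])$ on every normal disk. By Step 1 the two classes agree in $H^d(M,M\minus Z_\sigma;\ZZ)$.

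\textbf{Step 3.} For $A\subset M\minus Z_\sigma$, the inclusion of pairs $(M,A)\to(M,M\minus Z_\sigma)$ induces a restriction map on $H^d$. Equal classes have equal images under this map, so the images in $H^d(M,A;\ZZ)$ agree. Taking $A=\emptyset$ gives the image of $\sigma^\ast(u_E)$ in $H^d(M;\ZZ)$, which is $e(E)$ by definition.

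The main obstacle is Step 1. It requires fixing the orientation conventions consistently, and justifying that a class in $H^d(M,M\minus Z_\sigma;\ZZ)$ is detected fiberwise on normal disks, which is the Thom isomorphism applied to a possibly noncompact or multi-component $Z_\sigma$. Once those conventions are pinned down, Step 2 is a local degree computation and Step 3 is formal.
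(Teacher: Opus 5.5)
Your proposal is correct and takes essentially the paper's route. Both arguments identify $\sigma^\ast(u_E)$ with $PD([Z_\sigma])$ by evaluating on small normal disks, where both give the local index. Both then get the second claim by naturality along $(M,A)\to(M,M\minus Z_\sigma)$. The paper only cites the disk identification as standard, whereas you justify it with the Thom isomorphism on a tubular neighborhood and the local-degree computation.
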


\begin{proof}
    The first statement follows from the identification of Thom class with the relative Poincar\'e dual of the fundamental class of the zero locus described above. If $A\subset M\minus Z_{\sigma}$ then inclusion $(M,A)\to (M,M\minus Z_{\sigma})$ gives a natural map $H^d(M,M\minus Z_{\sigma})\to H^d(M,A)$. Under this map both the Thom and relative Poincar\'e dual map to their counterparts in $H^d(M,A)$. In particular, taking $A=\emptyset$ gives the Euler class. 
\end{proof}

We close the section by discussing some properties of the Euler class when $d$ is odd.

 Every vector bundle admits a non-trivial isomorphism given by applying the linear antipodal map fiberwise. When $d$ is even (resp.\ odd) this isomorphism is orientation preserving (resp.\ reversing). It follows that $e(E)=(-1)^de(E)$.  If $d$ is even this provides no information, but when $d$ is odd this tells us that the Euler class of $E$ is an element of $2$-torsion. We will use this fact in the form of the following lemma.

\begin{lemma}\label{lem:euler_is_torsion}
    Let $d$ be odd, and let $E\to M$ be an orientable rank-$d$ vector bundle over a closed orientable $d$-manifold. Then $e(E)=0$.
\end{lemma}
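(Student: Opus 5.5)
The plan is to combine the 2-torsion observation made just before the lemma with the fact that the relevant integral cohomology group of $M$ is torsion free.

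\textbf{Step 1: the Euler class is 2-torsion.} Fix an orientation on $E$ and let $\alpha:E\to E$ be the fiberwise antipodal map $v\mapsto -v$. This is a bundle automorphism covering the identity of $M$. On each fiber it has determinant $(-1)^d=-1$, so it reverses the orientation of $E$.

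By naturality of the Thom class, $\alpha^\ast(u_E)$ is the Thom class of $E$ with the opposite orientation, which is $-u_E$. Changing the orientation of $E$ negates the Euler class. Alternatively, one can see this directly from the section description in Section~\ref{sec:eulerclass}. If $\sigma$ is a transverse section, then $-\sigma=\alpha\circ\sigma$ is also transverse with the same zero set. Its local index at each zero is multiplied by $\det(-\Id_{\RR^d})=(-1)^d$. Since $-\sigma$ is homotopic to $\sigma$ through sections (by the straight-line homotopy used to show $e(E)$ is well defined), the two signed zero-cycles define the same class. Hence
\[
e(E)=(-1)^d e(E)=-e(E), \qquad\text{so}\qquad 2e(E)=0 \text{ in } H^d(M;\ZZ).
\]

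\textbf{Step 2: $H^d(M;\ZZ)$ is torsion free.} Since $M$ is a closed, connected, orientable $d$-manifold, we have $H^d(M;\ZZ)\cong \ZZ$. One can see this via Poincar\'e duality, $H^d(M;\ZZ)\cong H_0(M;\ZZ)\cong\ZZ$. Alternatively, use the universal coefficient theorem, $H^d(M;\ZZ)\cong \Hom(H_d(M),\ZZ)\oplus \operatorname{Ext}(H_{d-1}(M),\ZZ)$, together with the standard fact that $H_{d-1}(M;\ZZ)$ is torsion free for closed orientable $M$.

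Because $\ZZ$ has no nonzero 2-torsion, Step 1 forces $e(E)=0$. If $M$ is not assumed connected, apply the argument componentwise.

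\textbf{Main subtlety.} The only delicate point is justifying that $e$ changes sign when the orientation of $E$ is reversed, equivalently that the local indices of $-\sigma$ are $(-1)^d$ times those of $\sigma$. This is a one-line determinant computation in local trivializations, where $-\sigma$ corresponds to composing the local map $\RR^d\to\RR^d$ with $-\Id$. So I expect no real obstacle.
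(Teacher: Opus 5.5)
Your proof is correct and follows the paper's own argument: for odd $d$ the fiberwise antipodal map reverses orientation, so $e(E)=-e(E)$, and since $H^d(M;\ZZ)\cong\ZZ$ is torsion free this forces $e(E)=0$. Your version also spells out the sign change of the Thom class (equivalently, of the local indices of $-\sigma$), which the paper leaves implicit.
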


\begin{proof}
    By the discussion above, when $d$ is odd we have $e(E)\in H^d(M;\ZZ)\cong \ZZ$ is 2-torsion. This forces $e(E)=0$.
\end{proof}

\section{Topology of the Grassmannian/equal norm pair}\label{sec:GIT}

The main techniques of this section involve Geometric Invariant Theory (GIT). For background on the moment map and GIT techniques used in this
section, we refer to \cite{GeorgoulasRobbinSalamon}; see also
\cite{DolgachevInvariantTheory} for the algebraic GIT perspective. The main result of this section is similar to Theorem 2.8 of \cite{CaineNeedhamShonkwiler2025} (see also~\cite{caineNormalizingParsevalFrames2024}), and the ideas here were inspired by the work there. More specifically, we model the framework of the argument by constructing a certain energy function and showing that the corresponding basin of attraction for the negative gradient flow is the semistable set of a certain algebraic group action. 

We begin by proving some basic ``Schur–Horn'' type inequalities for elements of $\E_{d,n}$ that will become important later. Let $F\in \E_{d,n}$ and let $I\subset \{1,\ldots,n\}$. Let $V_I\subset F$ be the span of $\{P_Fe_i\}_{i\in I}$ and let $\ell$ denote its dimension.  It follows that $\tr(P_{V_I})=\sum_{i=1}^n\|P_{V_I}e_i\|^2=\ell$. For $i\in I$ we have $P_Fe_i\in V_I$, and so $\|P_{V_I}e_i\|^2=\|P_Fe_i\|^2$ for $i\in I$. It follows that
\[
\ell=\sum_{i=1}^n\|P_{V_I}e_i\|^2\geq \sum_{i\in I}\|P_Fe_i\|^2=\abs{I}\frac{d}{n}
\]
with the last equality coming because $F$ is equal norm. Morally speaking, this tells us that the span of a subset of vectors of size $\abs{I}$ coming from an equal norm Parseval frame of size $n$ in a $d$-dimensional space must have dimension at least $\abs{I}\frac{d}{n}$. As we will see, this inequality is intimately tied to the GIT arguments that we will use later.

Next, let $\ell\in \{0,\ldots, d-1\}$, and let $I\subset \{1,\ldots, n\}$. For $F\in \Grtil_{d,n}$, define $P_F^I$ to be the restriction of orthogonal projection onto $F$ to the coordinate subspace of $\RR^n$ whose coordinates are indexed by $I$. Then we define 
\[
Z_{\ell,I}=\{F\in \Grtil_{d,n}\mid \rank(P_F^I)\leq \ell\}
\]
and let $Z=\cup_{\ell,I}Z_{\ell,I}$, where the union is taken over all $\ell\in \{0,\ldots, d-1\}$ and all $I\subset \{1,\ldots,n\}$ with $\abs{I}>\frac{\ell n}{d}$. The discussion above shows that $\E_{d,n}\subset \Grtil_{d,n}\minus Z$. The main result of this section is the following.
\begin{theorem}\label{thm:homotop_equivalence}
    There is a strong deformation retraction from $\Grtil_{d,n}\minus Z$ onto $\E_{d,n}$. In particular, the inclusion $\E_{d,n}\hookrightarrow \Grtil_{d,n}\minus Z$ is a homotopy equivalence. 
\end{theorem}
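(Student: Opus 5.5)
We will realize the retraction as the negative gradient flow of a frame potential type energy on $\Grtil_{d,n}$, following the strategy of \cite{CaineNeedhamShonkwiler2025}. The energy is
\[
\Phi(F)=\sum_{i=1}^n\Bigl(\|P_Fe_i\|^2-\tfrac{d}{n}\Bigr)^2 ,
\]
which vanishes exactly on $\E_{d,n}$. The relevant group action is that of the diagonal torus $T=(\RR_{>0})^n$, or its complexification, acting on $\RR^n$ by $x\mapsto tx$ and hence on $\Grtil_{d,n}$. The corresponding moment map is $\mu(F)=\operatorname{diag}(P_F)-\tfrac dn\mathbf 1$, whose components are $\|P_Fe_i\|^2-\tfrac dn$. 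Then $\Phi=\|\mu\|^2$, and $\E_{d,n}=\mu^{-1}(0)$.

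The steps are as follows.

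\begin{enumerate}
\item Verify that $\mu$ is a moment map for the compact torus action on the Grassmannian, viewed as a real form of the Plücker-embedded complex Grassmannian. Equivalently, verify directly that the gradient of $\Phi$ is tangent to $T$-orbits, given by the infinitesimal action of $\mu(F)$. In particular, the flow of $-\nabla\Phi$ stays inside $T$-orbits. Since $\Grtil_{d,n}$ is compact and $\Phi$ is real analytic, the Łojasiewicz gradient inequality gives the two facts we need. First, every flow line converges to a single critical point. Second, the limit map $F\mapsto F_\infty$ depends continuously on $F$ on any open set where the limits all lie in the zero level set $\Phi^{-1}(0)$.

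\item Apply Kempf--Ness theory in the real form of \cite{GeorgoulasRobbinSalamon}. It says that the flow limit lies in $\mu^{-1}(0)$ exactly when the closure of the orbit $T\cdot F$ meets $\mu^{-1}(0)$, that is, when $F$ is semistable. It then suffices to identify the semistable set with $\Grtil_{d,n}\minus Z$.

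\item Identify the semistable set using the Hilbert--Mumford criterion: $F$ is unstable exactly when some one-parameter subgroup $t\mapsto \operatorname{diag}(t^{a_1},\dots,t^{a_n})$ drives $F$ to a limit $F_0$ with $\langle \mu(F_0),a\rangle<0$. Such limits are coordinate-split planes $F_0=(F_0\cap\RR^I)\oplus(F_0\cap\RR^{I^c})$. By convexity of the torus moment image, it is enough to consider two-step weights, with $a$ supported as an indicator-type vector on a coordinate set $I$. A short computation shows that the limit plane sees exactly $\rank(P_F^I)$ dimensions in the $\RR^I$ block. The Hilbert--Mumford inequality then becomes $\rank(P^I_F)<|I|\tfrac dn$. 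This matches $F\in Z_{\ell,I}$ with $\ell=\rank(P^I_F)$ and $|I|>\ell n/d$. The converse inclusion $Z\cap\E_{d,n}=\emptyset$ is the Schur--Horn type inequality proved above. Together with invariance of $Z$ under the flow (since $Z$ is $T$-invariant and closed), this yields the identification.

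\item Define the retraction by $H(F,s)=\phi_{s/(1-s)}(F)$ for $s<1$ and $H(F,1)=F_\infty$, where $\phi$ is the gradient flow. This fixes $\E_{d,n}$ pointwise, since those points are critical, and continuity at $s=1$ follows from step 1.
\end{enumerate}

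The main obstacle is step 3, matching the Hilbert--Mumford unstable locus precisely with the union of the Schubert-type sets $Z_{\ell,I}$. This requires care with real versus complex forms, since we only have a real torus acting on a real Grassmannian, and with showing that general weight vectors reduce to indicator weights. If the direct Hilbert--Mumford route is messy, we would instead argue by hand. Given $F\notin Z$, we would show the flow cannot converge to a non-minimal critical point. Such a critical point is a split plane whose moment value is negative on some coordinate block $I$, and following the flow backward would force $\rank(P_F^I)<|I|d/n$, placing $F$ in $Z$.
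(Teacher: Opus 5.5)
Your overall route is the paper's: the negative gradient flow of $\|\mu-c\|^2$, Kempf--Ness to identify the basin of attraction of $\mu^{-1}(c)$ with the semistable set, and Hilbert--Mumford to identify the semistable set with the complement of $Z$. Several of your variations are fine:
\begin{itemize}
\item using {\L}ojasiewicz in place of Lerman's theorem;
\item letting $(\RR_{>0})^n$ act directly on $\Grtil_{d,n}$ (the real Kempf--Ness statement follows from the complex one by conjugation invariance of the energy, which is how the paper descends);
\item your argument that points of $Z$ are unstable because $Z$ is closed, torus-invariant and disjoint from $\E_{d,n}$. This is a tidy alternative to the paper's explicit separating vector.
\end{itemize}

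The gap is the other inclusion in Step 3: that every unstable $F$ lies in some $Z_{\ell,I}$. This is the real content of the paper's Theorem \ref{thm:complement_of_Z_is_semistable}. You reduce to indicator-type weights ``by convexity of the torus moment image.'' But convexity only gives that $\operatorname{conv}\{1_B : B\in S(F)\}$ is a polytope separated from $c$ by some functional $a$. It does not say that $a$ can be taken to be $\pm 1_I$, modulo multiples of $(1,\ldots,1)$. Your description of the limit plane is also valid only for two-valued weights. A weight with $m$ distinct values produces a limit split along all $m$ level sets of $a$, not a two-block plane $(F_0\cap\RR^I)\oplus(F_0\cap\RR^{I^c})$.

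The missing idea is the sorting and Abel-summation argument in the paper. Take a separating $a$, so $\langle a,1_B\rangle>\langle a,c\rangle$ for all $B\in S(F)$. Reorder so that $a_1\le\cdots\le a_n$. Build the greedy basis $B$ from $P_Fe_1,P_Fe_2,\ldots$ and put $\ell_k=\dim\operatorname{span}\{P_Fe_1,\ldots,P_Fe_k\}$. Then
\[
\langle a,1_B\rangle-\langle a,c\rangle=\sum_{k=1}^{n-1}\Bigl(\ell_k-\tfrac{dk}{n}\Bigr)(a_k-a_{k+1}).
\]
Every $a_k-a_{k+1}\le 0$, so some $\ell_k<dk/n$, which means $F\in Z_{\ell_k,\{1,\ldots,k\}}$.

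In your one-parameter-subgroup language the same fix works. The blocks of the $m$-block limit $F_0$ have dimensions equal to the rank jumps of $P_F$ on the nested sublevel sets of $a$, and Abel summation of $\langle \mu(F_0)-c,a\rangle$ gives the same conclusion. Alternatively, you can cite Edmonds' description of the matroid base polytope by rank inequalities.

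Your fallback does not close the gap as stated. $\rank(P^L_F)$ is constant along the flow line, since the flow stays in a torus orbit, and it can only drop in the limit. So knowing that a block $L$ of the critical limit $F_\infty$ satisfies $\rank(P^L_{F_\infty})<|L|d/n$ gives no upper bound on $\rank(P^L_F)$. You would need the further Kirwan--Ness fact that the flow limit is governed by the one-parameter subgroup generated by $\mu(F_\infty)-c$, and then still the summation argument above.
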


The proof of Theorem \ref{thm:homotop_equivalence} is given by showing that negative gradient flow along a certain ``energy'' function $E$ provides a strong deformation retract.  

The relevant machinery from GIT will require us to work over the complex numbers. Let $\Grtil_{d,n}^\CC$ be the \emph{complex Grassmannian} consisting of $d$-dimensional subspaces in $\CC^n$. Inside $\Grtil_{d,n}^\CC$ we have the space $\E^\CC_{d,n}$ of equal norm points. The space $\Grtil_{d,n}^\CC$ can be equipped with a K\"ahler structure obtained by pulling back the standard Fubini–Study structure on projective space via the Pl\"ucker embedding $\Grtil_{d,n}^\CC\hookrightarrow \mathbb{P}(\wedge^d(\CC^n)).$ The compact torus group $K=(S^1)^n$ acts on $\Grtil_{d,n}^\CC$ via matrix multiplication by diagonal matrices with unit modulus entries. This $K$-action on $\Grtil_{d,n}^\CC$ is Hamiltonian, with moment map $\mu:\Grtil_{d,n}^\CC\to \RR^n$ given by $\mu(F)=\left(\abs{P_Fe_1}^2,\ldots,\abs{P_Fe_n}^2\right)$. Since $\tr(P_F)=d$ it follows that this map has image in the plane defined by the equation $\sum_ix_i=d$. If we let $c=(\frac{d}{n},\ldots,\frac{d}{n})$  and view $\Grtil^u_{d,n}$ as the real locus of $\Grtil^\CC_{d,n}$ then it follows that $\E^u_{d,n}=\mu^{-1}(c)\cap \Grtil^u_{d,n}$.  

Let $K^\CC$ be the complexification of $K$, which can be realized as diagonal matrices with non-zero complex entries. A point $F\in \Grtil_{d,n}^\CC$ is called \emph{c-semistable} if $\overline{K^\CC\cdot F}\cap \mu^{-1}(c)\neq \emptyset$ and unstable if it is not semistable. In words, semistable points are those whose $K^\CC$ orbit closure contains an equal norm point. There is a more concrete description of the set of semistable points, called the \emph{Hilbert–Mumford criterion}. Let $F\in \Grtil_{d,n}^\CC$. Using the Pl\"ucker embedding, we can write $F=[\sum_{B}w_B(F)e_B]$, where the sum is taken over all subsets $B\subset \{1,\ldots, n\}$ of size $d$. Let $S(F)=\{B\mid w_B(F)\neq 0\}$. This set indexes the non-zero coordinates of $F$ under the Pl\"ucker embedding and can alternately be described as those $B$ for which $\{P_Fe_i\}_{i\in B}$ is a basis for $F$. In this context, we let $1_B\in \RR^n$ be the vector with ones in the slots indexed by elements of $B$ and zero elsewhere. In the setting of torus actions one can state the Hilbert–Mumford condition concisely using the language of convex polytopes (see \cite[Theorem 9.2]{DolgachevInvariantTheory} and \cite[Theorem 3.4]{AmendolaKohnReichenbachSeigal}).

\begin{theorem}\label{thm:Hilber_Mumford}
With the terminology above, a point $F\in \Grtil_{d,n}^\CC$ is $c$-semistable if and only if the vector $c=(d/n,\ldots,d/n)$ is contained in the convex hull $\mathcal{C}$ of the set $\{1_B\mid B\in S(F)\}$. 
\end{theorem}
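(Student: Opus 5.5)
This statement is the torus case of the Hilbert--Mumford criterion. My plan is to prove it directly from the moment map description rather than quoting general GIT. The key structural fact is that the $K^\CC$-orbit of $F$ is determined in Plücker coordinates by the support $S(F)$. Write $t=\operatorname{diag}(t_1,\ldots,t_n)$. Then $w_B(t\cdot F)=t^{1_B}w_B(F)$, where $t^{1_B}=\prod_{i\in B}t_i$. So every point of the orbit has the same support $S(F)$, and the weights of the torus action on $\mathbb{P}(\wedge^d\CC^n)$ occurring at $F$ are exactly $\{1_B\mid B\in S(F)\}$.

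I would use the standard formula for the moment map of a torus acting linearly on projective space: in Plücker coordinates,
\[
\mu([w])=\frac{\sum_B \abs{w_B}^2\,1_B}{\sum_B\abs{w_B}^2}.
\]
This agrees with $(\abs{P_Fe_i}^2)_i$, since $\abs{P_Fe_i}^2=\sum_{B\ni i}\abs{w_B}^2/\sum_B\abs{w_B}^2$; this is the Cauchy--Binet identity for the diagonal of a projection. From the formula, $\mu(t\cdot F)$ is a convex combination with strictly positive coefficients of the $1_B$, $B\in S(F)$. Hence $\mu(K^\CC\cdot F)$ lies in the relative interior of $\mathcal{C}$, and by continuity $\mu(\overline{K^\CC\cdot F})\subset \mathcal{C}$. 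This gives the forward direction: if some point of the orbit closure maps to $c$, then $c\in\mathcal{C}$.

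For the converse, assume $c\in\mathcal{C}$, and let $\mathcal{F}$ be the unique face of $\mathcal{C}$ containing $c$ in its relative interior. Let $S'=\{B\in S(F)\mid 1_B\in\mathcal{F}\}$. Choose a linear functional $\lambda\in\ZZ^n$ that is constant on $\mathcal{F}$ and strictly larger on the other vertices of $\mathcal{C}$ (for instance a supporting functional of the face). The one-parameter subgroup $s\mapsto\operatorname{diag}(s^{\lambda_i})$, as $s\to 0$, sends $F$ to a limit point $F'$ with Plücker coordinates $w_B(F)$ for $B\in S'$ and $0$ otherwise. Then $F'$ lies in $\overline{K^\CC\cdot F}$, and since it is a limit of points of the Grassmannian, which is closed, $F'$ is again a $d$-plane with $S(F')=S'$. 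Now $c$ lies in the relative interior of $\operatorname{conv}\{1_B\mid B\in S'\}$. So it suffices to show that the image $\mu(K^\CC\cdot F')$ is the whole relative interior of that convex hull.

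This is the main step. Restrict to real diagonal matrices $t_i=e^{x_i}$ and consider
\[
\phi(x)=\log\sum_{B\in S'}\abs{w_B(F')}^2e^{2\langle x,1_B\rangle},
\]
which is a convex function on $\RR^n$. Its gradient is $2\mu(e^x\cdot F')$. The function $\phi(x)-2\langle x,c\rangle$ is bounded below and proper modulo the directions along which all $\langle\cdot,1_B\rangle-\langle\cdot,c\rangle$ are constant. This holds precisely because $c$ is in the relative interior: for every direction $v$ not in that subspace, some $B\in S'$ has $\langle v,1_B\rangle>\langle v,c\rangle$. Hence the function attains a minimum, and at a critical point we get $\mu(e^x\cdot F')=c$. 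The expected technical obstacle is making this properness argument precise, namely the coercivity along non-degenerate directions and the quotient by the lineality space. This is the classical Atiyah/Kempf--Ness argument for tori, and if needed I would instead cite the standard references already given.
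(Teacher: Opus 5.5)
Your proof is correct, and it takes a genuinely different route from the paper. The paper does not prove this statement itself. It quotes the Hilbert--Mumford criterion for torus actions from the algebraic GIT literature (Dolgachev; Am\'endola--Kohn--Reichenbach--Seigal), where semistability is the algebraic notion attached to an integral character. Matching that with the paper's analytic definition $\overline{K^\CC\cdot F}\cap\mu^{-1}(c)\neq\emptyset$ tacitly uses the Kempf--Ness correspondence, and the remark after the statement is needed to clear the denominators of $c$.

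You instead work directly with the analytic definition. The Cauchy--Binet identification of $\mu$ with the normalized average of the weights $1_B$ gives necessity at once, since the orbit closure maps into the closed set $\mathcal{C}$. For sufficiency you degenerate along a one-parameter subgroup to the face containing $c$ in its relative interior, and then minimize the Kempf--Ness potential on the real torus. This gives a self-contained argument that needs neither the algebraic/symplectic comparison nor integrality of $c$; even $\lambda$ could be real, using real $s\in(0,1)$. The citation buys only brevity.

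The step you flag as the main obstacle is short. Put $\psi(x)=\phi(x)-2\langle x,c\rangle$ and $L=\{v\mid \langle v,1_B-c\rangle=0 \text{ for all } B\in S'\}$. Since $c$ is in the relative interior of $\operatorname{conv}\{1_B\mid B\in S'\}$, you can write $c=\sum_{B\in S'}\alpha_B1_B$ with every $\alpha_B>0$. Then $\sum_{B}\alpha_B\langle v,1_B-c\rangle=0$ forces $\max_{B\in S'}\langle v,1_B-c\rangle>0$ for every nonzero $v\in L^\perp$. By compactness of the unit sphere in $L^\perp$, this maximum is at least $\delta\abs{v}$ for some $\delta>0$, so $\psi(x)\geq\log\min_{B\in S'}\abs{w_B(F')}^2+2\delta\abs{x_{L^\perp}}$. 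Since $\psi$ is invariant under translation by $L$, it attains a global minimum, and there $\mu(e^x\cdot F')=c$. Finally, $e^x\cdot F'$ lies in $\overline{K^\CC\cdot F}$ because that closure is $K^\CC$-invariant.
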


\begin{remark}
The point $c=(d/n,\ldots,d/n)$ need not be integral, whereas the
usual GIT formulation uses an integral character.  However, since
$nc=(d,\ldots,d)\in\ZZ^n$, we may clear denominators without changing
the Hilbert–Mumford inequalities.  We therefore continue to use the
more natural normalization $c$.
\end{remark}

Next, let $Z_{\ell,I}^\CC=\{F\in \Grtil_{d,n}^\CC\mid \rank(P_F^I)\leq \ell\}$, and let $Z^\CC=\cup_{\ell,I}Z_{\ell,I}^\CC$, where the union is taken over $\ell \in \{0,\ldots,d-1\}$ and $I\subset \{1,\ldots, n\}$ with $\abs{I}>\frac{\ell n}{d}$. With this notation we have the following.

\begin{theorem}\label{thm:complement_of_Z_is_semistable}
    The set of $c$-semistable points is exactly $\Grtil_{d,n}^\CC\minus Z^\CC$.
\end{theorem}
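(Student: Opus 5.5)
We use the Hilbert–Mumford criterion (Theorem \ref{thm:Hilber_Mumford}) to reduce the statement to a combinatorial fact about matroid base polytopes. For $F\in\Grtil^\CC_{d,n}$ let $\mathcal M_F$ be the matroid on $\{1,\ldots,n\}$ represented by the vectors $P_Fe_1,\ldots,P_Fe_n$. Its bases are exactly $S(F)$, and its rank function is $r_F(I)=\rank(P_F^I)=\dim\operatorname{span}\{P_Fe_i\}_{i\in I}$. In this language $F\in Z^\CC$ exactly when there is some $I$ with $r_F(I)<|I|\,d/n$. If $r_F(I)=\ell\le d-1$ then $|I|>\ell n/d$, and conversely. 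Note that $\ell=d$ cannot occur, since $|I|\le n$ forces $|I|d/n\le d$. So the theorem is equivalent to the claim
\[
c\in\operatorname{conv}\{1_B\mid B\in S(F)\}\iff |I|\tfrac{d}{n}\le r_F(I)\ \text{for all } I\subset\{1,\ldots,n\}.
\]

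\emph{Necessity.} For every basis $B$ and every $I$ we have $\langle 1_B,1_I\rangle=|B\cap I|\le r_F(I)$, because $B\cap I$ is an independent subset of $I$. If $c=\sum_B\lambda_B1_B$ is a convex combination, pairing with $1_I$ gives $|I|d/n=\langle c,1_I\rangle\le r_F(I)$. Hence semistable points lie outside $Z^\CC$. This half could also be done directly with the trace computation already given for $\E_{d,n}$, together with the observation that $\rank(P^I_F)$ is lower semicontinuous along $K^\CC$-orbit closures. The polytope argument is cleaner, though.

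\emph{Sufficiency.} This is where the real content lies. I would invoke Edmonds' theorem: the base polytope of a matroid is
\[
\{x\in\RR^n\mid \textstyle\sum_i x_i=r(E),\ x\ge 0,\ \langle x,1_I\rangle\le r(I)\ \forall I\}.
\]
Here $c$ satisfies $\sum_i c_i=d=r_F(\{1,\ldots,n\})$. The full ground set has rank $d$ because $P_F$ is surjective onto $F$, or equivalently because $F$ would lie in $Z^\CC_{d-1,\{1,\ldots,n\}}$ otherwise. We also have $c\ge0$, and $c$ satisfies the rank inequalities by hypothesis. So $c$ lies in the base polytope, which is exactly the convex hull in Theorem \ref{thm:Hilber_Mumford}.

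The main obstacle is justifying Edmonds' description, which is really a citation. If a self-contained argument is preferred, I would proceed by contradiction using a separating hyperplane: suppose $c\notin\mathcal C$, and choose a linear functional $a$ with $\langle a,1_B\rangle<\langle a,c\rangle$ for all $B\in S(F)$. Sort the coordinates so that $a_{i_1}\ge\cdots\ge a_{i_n}$. The greedy algorithm shows that the maximum of $\langle a,1_B\rangle$ over bases equals $\sum_k a_{i_k}\bigl(r(J_k)-r(J_{k-1})\bigr)$, where $J_k=\{i_1,\ldots,i_k\}$. Summation by parts then rewrites both $\max_B\langle a,1_B\rangle$ and $\langle a,c\rangle$ as nonnegative combinations of the prefix quantities $r(J_k)$ and $|J_k|d/n$, plus a common multiple of the total. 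This yields some prefix $J_k$ with $r(J_k)<|J_k|d/n$, which is a contradiction.
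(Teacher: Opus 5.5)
Your proposal is correct. The easy direction is the paper's argument in different form. The paper separates $c$ from the hull using the functional $a=\abs{I}\mathbf{1}-n1_I$. Since $\langle\mathbf{1},1_B\rangle=d=\langle\mathbf{1},c\rangle$, this is equivalent to your pairing with $1_I$ combined with the bound $\abs{B\cap I}\le r_F(I)$.

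For the hard direction, your primary route genuinely differs from the paper's. You identify $\operatorname{conv}\{1_B\mid B\in S(F)\}$ as the base polytope of the matroid $\mathcal M_F$ and cite Edmonds' inequality description. Then the condition $F\notin Z^\CC$ is literally the list of rank inequalities. The paper never uses matroid language and instead proves the needed case directly. Its proof is exactly your self-contained fallback with the signs reversed (replace $a$ by $-a$):
\begin{itemize}
\item take a separating $a$ with $\langle a,1_B\rangle>\langle a,c\rangle$ for all $B\in S(F)$;
\item sort $a$ increasingly;
\item build a basis greedily by scanning $P_Fe_1,P_Fe_2,\ldots$ in that order;
\item apply summation by parts to both sides to find a prefix $I_k$ with $\ell_k<kd/n$, i.e.\ $F\in Z^\CC_{\ell_k,I_k}$.
\end{itemize}

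One simplification applies to your fallback. You do not need the fact that the greedy basis maximizes $\langle a,1_B\rangle$ over all bases. Every $B\in S(F)$ already satisfies the strict separating inequality, so it suffices to evaluate on the single greedy basis, which is what the paper does.

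The trade-off is as follows. The Edmonds citation makes the structure transparent: the unstable loci $Z^\CC_{\ell,I}$ are exactly the violated rank inequalities of a matroid base polytope. The paper's argument is self-contained and avoids any matroid-theoretic citation. It is essentially the standard greedy proof of Edmonds' theorem, specialized to the single point $c$.
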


\begin{proof}
    We will show that points fail to satisfy the Hilbert–Mumford condition if and only if they are contained in $Z^\CC$.  First suppose that $F\in Z^\CC$, then $F\in Z_{\ell,I}^\CC$ for some appropriate choice of $\ell$ and $I$. We define an integer-valued vector $ a\in \RR^n$ so that $a_i=-(n-\abs{I})$ when $i\in I$ and $a_i=\abs{I}$ if $i\notin I$. It is straightforward to check that $\langle a,c\rangle=0$. Next, we will show that for each $B\in S(F)$ that $\langle a,1_B\rangle>0$. Thus the convex hull is on the positive side of a hyperplane that contains $c$, and hence does not contain $c$. 

    To see this observe that if $B\in S(F)$ then $\{P_Fe_i\}_{i\in B}$ is a basis for $F$, and is in particular linearly independent. Since the span of $\{P_Fe_i\}_{i\in I}$ has dimension at most $\ell$, it follows that $\abs{B\cap I}\leq \ell$. Computing we find that 
    \begin{align*}\langle a,1_B\rangle &=\sum_{i\in B}a_i
    =-(n-\abs{I})\abs{B\cap I}+\abs{I}(d-\abs{B\cap I})\\ &=\abs{I}d-n\abs{B\cap I}\geq \abs{I}d-n\ell>0
    \end{align*}

    On the other hand, suppose that $c$ is not in the desired convex hull $\mathcal{C}$, then there is a vector $a\in \RR^n$ so that $\langle a,1_B\rangle >\langle a,c\rangle$ for every $B\in S(F)$. By renumbering the coordinates we can assume without loss of generality that $a_1\leq a_2\leq\ldots\leq a_n$. Next, we construct a basis $B$ for $F$ by starting with the empty set and considering $v_i=P_Fe_i$ in order, adding each vector if and only if doing so preserves linear independence. Since $B$ is a basis for $F$ it follows that $B\in S(F)$. Furthermore, if we let $\ell_k$ be the dimension of the span of $\{v_1,\ldots, v_k\}$, then by construction we have $\abs{B\cap \{1,\ldots, k\}}=\ell_k$.

    Computing we find $
    \langle a, 1_B\rangle =\sum_{i=1}^na_ib_i$, where $b_i$ is the $i$th component of $1_B$. Set $\ell_0=0$. Then $b_i=\ell_i-\ell_{i-1}$, and so we have 
    \[
\sum_{i=1}^na_ib_i=\sum_{i=1}^na_i(\ell_i-\ell_{i-1})=\ell_na_n+\sum_{i=1}^{n-1}\ell_i(a_{i}-a_{i+1})=da_n+\sum_{i=1}^{n-1}\ell_i(a_{i}-a_{i+1}).
    \]

    On the other hand $\langle a,c\rangle =\frac{d}{n}\sum_{i=1}^na_i$. Writing $1=i-(i-1)$ gives 
    \[
    \frac{d}{n}\sum_{i=1}^na_i=\frac{d}{n}\left(\sum_{i=1}^na_i(i-(i-1))\right)=\frac{d}{n}\left(na_n+\sum_{i=1}^{n-1}i(a_i-a_{i+1})\right)=da_n+\sum_{i=1}^{n-1}\frac{di}{n}(a_i-a_{i+1})
    \]
    Since $\langle a,1_B\rangle>\langle a,c\rangle$ it follows that 
    $\sum_{i=1}^{n-1}(\ell_i-\frac{di}{n})(a_i-a_{i+1})>0$. Since each $a_i-a_{i+1}$ is non-positive by construction it follows that there must be some $k<n$ for which $\ell_k-\frac{dk}{n}<0$, and hence $k>\frac{\ell_kn}{d}$. It follows that if we let $I_k=\{1,\ldots, k\}$ then $F\in Z_{\ell_k,I_k}^\CC\subset Z^\CC$.
\end{proof}

\subsection{Kempf–Ness gradient flow}

We now introduce the energy function that we will flow along. Define $E:\Grtil_{d,n}^\CC\to \RR$ by $E(F)=\abs{\mu(F)}^2=\sum_{i=1}^n\abs{P_Fe_i}^4$. By applying Cauchy–Schwarz with the vectors $\mu(F)$ and $c$ we have 
\[
\frac{d^4}{n^2}=\langle \mu(F),c\rangle^2\leq \abs{\mu(F)}^2\abs{c}^2=\frac{d^2}{n}E(F)
\]

It follows that $\frac{d^2}{n}\leq E(F)$ with equality precisely when $\mu(F)=c$. Hence the energy is minimized exactly along the equal norm locus.

\begin{remark}
The Kempf–Ness theory is usually formulated so that the distinguished
level of the moment map is zero.  Here we instead work with the
unshifted moment map
\[
    \mu(F)=\bigl(\abs{P_Fe_1}^2,\ldots,\abs{P_Fe_n}^2\bigr)
\]
and the level
\[
    c=(d/n,\ldots,d/n).
\]
Since the image of $\mu$ lies in the affine hyperplane
$\sum_i x_i=d$, we have
\[
    \abs{\mu(F)}^2
    =
    \abs{\mu(F)-c}^2+\abs{c}^2.
\]
Thus the negative gradient flow of the unshifted energy
$E=\abs{\mu}^2$ agrees with the usual Kempf–Ness flow for the shifted
moment map $\mu-c$.  In particular, the standard results identifying
the semistable locus with the basin of attraction of the zero set
apply here with $\mu^{-1}(c)$ in place of $\mu^{-1}(0)$.
\end{remark}

By the Kempf--Ness theorem and the Kirwan--Ness
description of the gradient-flow stratification
\cite[Lemma~5.2.5(b) and Corollary~7.1.9]{Woodward2010},
the semistable locus is precisely the basin of
attraction of $\mu^{-1}(c)$ under the negative
gradient flow of $E$. Here we have used the fact
that the squared norms of $\mu$ and the centered
moment map $\mu-c$ differ by a constant, and
therefore have identical gradient flows.

By Atiyah's connectedness theorem
\cite[Theorem~1]{Atiyah1982}, the fibers of
the moment map on the complex Grassmannian
are connected. In particular, $\mu^{-1}(c)$
is connected. Since this is the global minimum
set of $E$, it is a connected component of
the critical set of $E$.

Finally, the moment map is proper because
the complex Grassmannian is compact.
Lerman's theorem
\cite[Theorem~1.1]{LermanMomentMap}
therefore implies that the negative gradient
flow defines a strong deformation retraction
of the semistable locus onto $\mu^{-1}(c)$.

Putting this all together we get the following:

\begin{theorem}\label{thm:complex_deformation_retraction}
    Negative gradient flow along the energy function $E$ gives a strong deformation retraction of $\Grtil_{d,n}^\CC\minus Z^\CC$ onto $\E^\CC_{d,n}$. 
\end{theorem}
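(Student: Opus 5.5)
The plan is to assemble Theorem \ref{thm:complex_deformation_retraction} directly from the ingredients just established, checking that each hypothesis of the cited results holds in our setting.

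First, Theorem \ref{thm:complement_of_Z_is_semistable} identifies the $c$-semistable locus with $\Grtil_{d,n}^\CC\minus Z^\CC$. Second, the Kempf--Ness/Kirwan--Ness description cited from \cite{Woodward2010} identifies this semistable locus with the basin of attraction of $\mu^{-1}(c)$ under the negative gradient flow of $\abs{\mu-c}^2$. By the remark preceding the theorem, $E=\abs{\mu}^2$ differs from $\abs{\mu-c}^2$ by the constant $\abs{c}^2$, so the two functions have the same flow. Third, the flow exists for all positive time, since $\Grtil^\CC_{d,n}$ is compact. Every trajectory starting in the semistable set therefore has its $\omega$-limit inside $\mu^{-1}(c)=\E^\CC_{d,n}$. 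Here I use that $\mu^{-1}(c)$ is exactly the equal-norm locus, because the diagonal entries of $P_F$ are $\abs{P_Fe_i}^2$.

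The main obstacle is upgrading ``basin of attraction'' to a genuine \emph{strong deformation retraction}. This requires two things:
\begin{itemize}
\item each trajectory converges to a single limit point, rather than merely accumulating on $\mu^{-1}(c)$;
\item the limit map $r(F)=\lim_{t\to\infty}\phi_t(F)$ is continuous on the whole semistable set.
\end{itemize}
I would handle both by invoking Lerman's theorem \cite{LermanMomentMap}. Its inputs are that $\mu$ is proper, which is automatic since the domain is compact, and that $\mu^{-1}(c)$ is a connected component of the critical set of $\abs{\mu-c}^2$. For the latter, $\mu^{-1}(c)$ is the global minimum set of $E$ by the Cauchy--Schwarz computation, and it is connected by Atiyah's convexity/connectedness theorem \cite{Atiyah1982}. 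Lerman's argument rests on a Łojasiewicz inequality for the real-analytic function $\abs{\mu-c}^2$, which gives finite trajectory length. That yields both convergence and continuity of $r$.

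Given these, I define $H(F,s)=\phi_{s/(1-s)}(F)$ for $s<1$ and $H(F,1)=r(F)$. By Lerman's theorem $H$ is continuous on $(\Grtil^\CC_{d,n}\minus Z^\CC)\times[0,1]$. The flow preserves the semistable set, since it moves within closures of $K^\CC$-orbits and $Z^\CC$ is $K^\CC$-invariant. Points of $\mu^{-1}(c)$ are critical points of $E$ and so stay fixed for all time. Together these give exactly the statement of the theorem.
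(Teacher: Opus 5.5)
Your proposal is correct and follows essentially the same route as the paper. Both identify the semistable locus with $\Grtil_{d,n}^\CC\minus Z^\CC$, invoke Kempf--Ness/Kirwan--Ness to identify it with the basin of attraction of $\mu^{-1}(c)$ (noting that $E$ and $\abs{\mu-c}^2$ differ by a constant), and then apply Lerman's theorem using properness of $\mu$ and the fact that $\mu^{-1}(c)$ is a connected component of the critical set, via Atiyah connectedness and global minimality. Your explicit reparametrized homotopy and the flow-invariance checks just spell out what Lerman's theorem already delivers.
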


We can now prove Theorem \ref{thm:homotop_equivalence}.

\begin{proof}[Proof of Theorem \ref{thm:homotop_equivalence}]
    Complex conjugation is an isometry under which the function $E$ is invariant. It follows that the gradient of $E$ is also conjugation invariant and hence the gradient flow preserves the real Grassmannian $\Grtil^u_{d,n}$. Here we have $\E^u_{d,n}=\E^\CC_{d,n}\cap \Grtil^u_{d,n}$ and define $Z^u=Z^\CC\cap \Grtil^u_{d,n}$. Hence $\Grtil^u_{d,n}\minus Z^u$ strong deformation retracts onto $\E^u_{d,n}$.  

    Let $H:\Grtil^u_{d,n}\minus Z^u\times [0,1]\to \Grtil^u_{d,n}\minus Z^u$ be the homotopy corresponding to this strong deformation retract and let $p:\Grtil_{d,n}\to \Grtil^u_{d,n}$ be the covering projection. Composing, we get a homotopy $H\circ p:\Grtil_{d,n}\minus Z\times [0,1]\to \Grtil^u_{d,n}\minus Z^u$. The identity is a lift of $H\circ p\vert_0$ and so we can lift $H\circ p$ through the covering map $p$ to get 
    $\tilde H:\Grtil_{d,n}\minus Z \times [0,1]\to \Grtil_{d,n}\minus Z$. Since the image of $H\circ p\vert_1$ is contained in $\E^u_{d,n}$ and $\E_{d,n}$ is the $p$-preimage of $\E^u_{d,n}$ it follows that the image of $\tilde H\vert_1$ is contained in $\E_{d,n}$. Finally, for each $x\in \E_{d,n}$, $H(p(x),t)=p(x)$, and so it follows that $\tilde H(x,t)=x$. Thus $\tilde H$ restricts to the identity along $\E_{d,n}$ throughout the homotopy, and we have the desired strong deformation retraction. 
    
\end{proof}

\section{Homotopy groups of the Grassmannian/equal norm pair}\label{sec:topology_of_pair}

In this section, we discuss the connectivity of the pair $(\Grtil_{d,n},\E_{d,n})$ and study its first non-trivial relative homotopy group in our cases of interest. By Theorem \ref{thm:homotop_equivalence} the inclusion of pairs $(\Grtil_{d,n},\E_{d,n})\hookrightarrow (\Grtil_{d,n},\Grtil_{d,n}\minus Z)$ induces isomorphisms on relative homotopy and homology groups. In order to describe the connectivity of this pair we will want to understand the codimension of the various strata of $Z$. 

There is a slightly different description of the $Z_{\ell,I}$. Let $E_{I^c}$ be the coordinate subspace indexed by the elements of the complement $I^c$. With this in mind it is easy to check that 
\[
Z_{\ell,I}=\{F\in \Grtil_{d,n}\mid \dim(F\cap E_{I^c})\geq d-\ell\}
\]

This set is the inverse image of a real Schubert variety under the orientation double cover (see \cite{FultonYoungTableaux} for background on Schubert varieties). Specifically, the codimension of $Z_{\ell,I}$ is computed in \cite[\S 9.4]{FultonYoungTableaux} and is given by
\begin{equation}\label{eq:codim_formula}
    (d-\ell)(\abs{I}-\ell)
\end{equation}
This gives the following codimension bound. 

\begin{theorem}\label{thm:possible_codimensions}
    If $d>2 $ and $n \geq 2d$, then each unstable rank locus $Z_{\ell,I}$ has codimension at least $d$. 
\end{theorem}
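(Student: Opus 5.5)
We need to show that $(d-\ell)(|I|-\ell)\ge d$ whenever $0\le \ell\le d-1$ and $|I|>\ell n/d$, and that each $Z_{\ell,I}$ that is actually nonempty (or proper) satisfies this. The plan is to start from the codimension formula \eqref{eq:codim_formula} and minimize it over the admissible range of $(\ell,|I|)$.

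\emph{Integer bound on $|I|$.} Since $n\ge 2d$, the condition $|I|>\ell n/d$ gives $|I|>2\ell$. Because $|I|$ is an integer, $|I|\ge 2\ell+1$, and hence $|I|-\ell\ge \ell+1$. The codimension is therefore at least $(d-\ell)(\ell+1)$.

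\emph{Minimizing over $\ell$.} The function $g(\ell)=(d-\ell)(\ell+1)$ is a concave quadratic in $\ell$. Its minimum over the integer interval $0\le \ell\le d-1$ is therefore attained at an endpoint:
\[
g(0)=d,\qquad g(d-1)=1\cdot d=d.
\]
So $(d-\ell)(|I|-\ell)\ge g(\ell)\ge d$ for all admissible $\ell$ and $I$, which is the claimed bound.

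\emph{Edge cases.}
\begin{itemize}
\item The formula \eqref{eq:codim_formula} from \cite{FultonYoungTableaux} is valid only when the Schubert condition $\dim(F\cap E_{I^c})\ge d-\ell$ is nondegenerate, i.e.\ when $d-\ell\le n-|I|$. If $n-|I|<d-\ell$, then $Z_{\ell,I}$ is empty and there is nothing to prove.
\item When $I$ is large enough that the naive formula might exceed $\dim \Grtil_{d,n}=d(n-d)$, the set is again empty or the bound holds trivially.
\item Passing through the orientation double cover preserves codimension, since it is a local diffeomorphism.
\end{itemize}

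The main point needing care is this applicability of the Schubert codimension formula. The inequality itself is elementary, and it is sharp at $\ell=0,\ |I|=1$ and at $\ell=d-1,\ |I|=2d-1$ (the latter only when $n=2d$); these are exactly the top-dimensional strata used later. The hypothesis $d>2$ is not needed for the inequality and is presumably included only for consistency with later sections.
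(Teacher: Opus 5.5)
Your argument is correct and is essentially the paper's proof: both use $|I|>\ell n/d\geq 2\ell$ to get $|I|-\ell\geq \ell+1$ and then bound $(d-\ell)(\ell+1)$ below by $d$ (the paper via the identity $(d-\ell)(\ell+1)=d+\ell(d-1-\ell)$ rather than concavity), and your remarks on empty loci and the double cover are accurate, if not strictly needed. One small correction to your closing aside, which the proof does not use: the equality case $(\ell,|I|)=(d-1,2d-1)$ is also unstable when $n=2d+1$, since $d(2d-1)>(d-1)(2d+1)$, and it gives the strata $D_{i,j}$, so it is not exclusive to $n=2d$.
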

\begin{proof}
Write $m=|I|$. Since $Z_{\ell,I}$ is unstable
and $n\geq 2d$, we have
\[
    m>\frac{\ell n}{d}\geq 2\ell.
\]
Thus $m-\ell\geq \ell+1$, and the
codimension formula gives
\[
\begin{aligned}
    \operatorname{codim} Z_{\ell,I}
    &=(d-\ell)(m-\ell)\\
    &\geq (d-\ell)(\ell+1)\\
    &=d+\ell(d-1-\ell)\\
    &\geq d.
\end{aligned}
\]
\end{proof}

Just by knowing the codimension we can make some statements regarding the low-dimensional homotopy groups. Near a smooth point of a codimension-\(d\) component of \(Z\) that lies in no other component, $\Grtil_{d,n}$ looks locally like a neighborhood of the origin in $\RR^{d}\times \RR^{K-d}$, where $K=\dim(\Grtil_{d,n})$. A $d$-disk in the first factor will intersect $Z$ in exactly one point. Such a disk is called a \emph{meridional disk}. 

\begin{theorem}\label{thm:d-1_connected_plus_generators}
    If $d>2$  and $n\geq 2d$, then the pair $(\Grtil_{d,n},\E_{d,n})$ is $(d-1)$-connected and $\pi_d(\Grtil_{d,n},\E_{d,n})$ is generated by meridional disks around the various top-dimensional components of $Z$. 
\end{theorem}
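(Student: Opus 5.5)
By Theorem~\ref{thm:homotop_equivalence}, the inclusion of pairs $(\Grtil_{d,n},\E_{d,n})\hookrightarrow(\Grtil_{d,n},\Grtil_{d,n}\minus Z)$ induces isomorphisms on relative homotopy groups. It therefore suffices to study the pair $(\Grtil_{d,n},\Grtil_{d,n}\minus Z)$, where $Z$ is a finite union of the closed sets $Z_{\ell,I}$. By Theorem~\ref{thm:possible_codimensions}, each $Z_{\ell,I}$ has codimension at least $d$. Each $Z_{\ell,I}$ is (the double cover of) a real Schubert variety, so it admits a Whitney stratification by finitely many smooth submanifolds. The top stratum has codimension $(d-\ell)(|I|-\ell)$, and all lower strata have strictly larger codimension. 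Consequently, $Z$ is a finite union of smooth submanifolds $S_\alpha$ of $\Grtil_{d,n}$, each of codimension at least $d$.

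For connectivity, take $k\le d-1$ and a map of triples $f:(D^k,S^{k-1},\ast)\to(\Grtil_{d,n},\Grtil_{d,n}\minus Z,p)$. First homotope $f$ rel $S^{k-1}$ to a smooth map; this is possible because $f(S^{k-1})$ already lies in the open set $\Grtil_{d,n}\minus Z$, so a small perturbation keeps the boundary away from $Z$. Next make $f$ transverse to every stratum $S_\alpha$. Since $k+\dim S_\alpha\le (d-1)+(K-d)<K$, transversality forces $f(D^k)\cap Z=\emptyset$. Hence $[f]=0$, and the pair is $(d-1)$-connected. The same argument applied to based maps of $D^{k}\times I$ shows that homotopies may also be taken transverse, which we need in the next step.

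For generators, take $k=d$ and again make $f$ transverse to all strata. Strata of codimension greater than $d$ are missed entirely. Each codimension-$d$ stratum is met in finitely many interior points $x_1,\dots,x_r$ of $D^d$, and at each of these $f$ is a local diffeomorphism onto a normal slice; that is, a small disk around $x_j$ maps to a meridional disk, with a sign. To complete the argument:
\begin{itemize}
\item Choose disjoint small disks $D_j$ around the $x_j$ and paths in $D^d\minus\bigcup D_j$ from the basepoint to $\partial D_j$.
\item On the complement of the disks and paths, $f$ maps into $\Grtil_{d,n}\minus Z$.
\item The standard decomposition of a disk with holes then writes $[f]$ as a sum, in $\pi_d$ (abelian since $d\ge 3$), of the classes of the $f|_{D_j}$, each transported along its path.
\end{itemize}
Each $f|_{D_j}$ is $\pm$ a meridional disk around the top-dimensional piece of $Z$ containing $f(x_j)$. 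One point needs checking: a codimension-$d$ point of $Z$ lying in two components, or a singular point of the union, must have codimension greater than $d$. This holds because intersections and singular loci of the $Z_{\ell,I}$ are unions of Schubert cells of smaller dimension, so transversality avoids them.

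The main obstacle is the change of basepoint, i.e.\ the $\pi_1(\Grtil_{d,n}\minus Z)$-action. Meridians around the same component but transported along different paths could a priori differ. I would handle this by noting that $\Grtil_{d,n}\minus Z\simeq\E_{d,n}$, and that the complement of a codimension-$\ge 3$ set (recall $d>2$) in the simply connected $\Grtil_{d,n}$ (here $n\ge 2d$) is simply connected. With trivial action, a meridional disk around a given component is well defined up to sign, since the normal slices of a connected smooth stratum are all homotopic through the stratum. So meridians of the top-dimensional components generate. Verifying that each top-dimensional stratum is connected, or else counting one meridian per connected component, is the remaining detail I would check against the Schubert cell description.
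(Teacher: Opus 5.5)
Your connectivity argument is the same as the paper's: transversality to a stratification of $Z$ whose strata all have codimension at least $d$.

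For the generation statement you take a genuinely different route, and it is correct.

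\textbf{The paper's route.} It passes to homology. Relative Hurewicz gives $\pi_d(\Grtil_{d,n},\E_{d,n})\cong H_d(\Grtil_{d,n},\Grtil_{d,n}\minus Z)$. Alexander--Lefschetz duality then identifies this with $H^{K-d}(Z)$, the top cohomology of $Z$, which is generated by classes dual to its top-dimensional components.

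\textbf{Your route.} You stay in homotopy. You make a representative $d$-disk transverse, excise small disks around its finitely many intersection points with the codimension-$d$ strata, and sum the local classes. You use $\pi_1(\Grtil_{d,n}\minus Z)=0$ to remove the dependence on the connecting paths. The paper needs this same fact, implicitly, to apply relative Hurewicz.

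\textbf{What each buys.} Your argument avoids duality on the singular space $Z$ and makes precise what ``a meridian'' means. Meridians taken at points of one connected piece of the smooth codimension-$d$ locus agree up to sign. Pieces separated by singular or intersection loci of codimension $d+1$ can give different classes. This is exactly the $D_{ij}^{\pm}$ phenomenon for $n=2d+1$ described in the paper's remark. So the ``remaining detail'' you flag at the end is not needed here, since the theorem only asserts generation.

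The paper's route gives more: an isomorphism $\pi_d\cong H^{K-d}(Z)$, not just a generating set. Proposition~\ref{prop:homotopy_of_stiefel_pair} uses this isomorphism, via Mayer--Vietoris, to show the meridians are free generators when $n\neq 2d+1$. Your argument by itself gives no independence.

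\textbf{One small correction.} The $Z_{\ell,I}$ are Schubert varieties for different coordinate flags, so their intersections are not unions of Schubert cells of a single decomposition. The right reason that points lying on two codimension-$d$ components have codimension greater than $d$ is different. Two distinct irreducible components of the same dimension meet in a proper algebraic subset of each, hence in strictly smaller dimension. A transverse $d$-disk therefore misses such points.
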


\begin{remark}
Each point of a codimension-$d$ stratum determines a local meridian.
However, meridians at different points of the same stratum need not
be equal in $\pi_d(\Grtil_{d,n},\E_{d,n})$, and it is possible for these meridians to
satisfy nontrivial relations. Both phenomena occur when $n=2d+1$.
\end{remark}

\begin{proof}
First, recall that $\Grtil_{d,n}$ is simply connected for \(d,n-d\geq 2\). Since \(d>2\) and \(n\geq 2d\), this holds in our setting. An element of $\pi_k(\Grtil_{d,n},\Grtil_{d,n}\minus Z)$ is represented by a map of pairs $(\Delta^k,S^{k-1})\to (\Grtil_{d,n},\Grtil_{d,n}\minus Z)$. By relative transversality (see for instance \cite{GuilleminPollack1974}) we can homotope this map so that $\Delta^k$ is transverse to every stratum of a Whitney stratification of \(Z\) compatible with the rank loci. Each stratum has codimension at least $d$ in $\Grtil_{d,n}$, and so if $k<d$ then $\Delta^k$ will be disjoint from $Z$ for dimensional reasons. Thus $(\Delta^k,S^{k-1})\to (\Grtil_{d,n},\Grtil_{d,n}\minus Z)$ can be homotoped to have image in $\Grtil_{d,n}\minus Z$ and so the corresponding class in $\pi_k(\Grtil_{d,n},\Grtil_{d,n}\minus Z)$ is trivial. 

By Hurewicz, we have that $\pi_d(\Grtil_{d,n},\E_{d,n})\cong H_d(\Grtil_{d,n},\E_{d,n}).$ Applying Poincar\'e–Lefschetz duality gives $H_d(\Grtil_{d,n},\E_{d,n})\cong H_d(\Grtil_{d,n},\Grtil_{d,n}\minus Z)\cong H^{K-d}(Z)$, where $K=\dim(\Grtil_{d,n})$. Since $Z$ has codimension $d$ this is the top dimensional cohomology of $Z$ and we see that $H_d(\Grtil_{d,n},\E_{d,n})$ is generated by meridional disks that are Poincar\'e dual to the top dimensional components of $Z$. 
\end{proof}

\subsection{The top dimensional pieces}

Theorem \ref{thm:d-1_connected_plus_generators} tells us that the generators of the first non-trivial homotopy group come from top dimensional pieces of $Z$. Our next goal is to enumerate these pieces. 

By examining the equality cases in the proof of Theorem \(\ref{thm:possible_codimensions}\), we see that codimension \(d\) occurs only when \((\ell,|I|)=(0,1)\) or \((d-1,2d-1)\). The first case gives the \(n\) components \(Z_i:=Z_{0,\{i\}}\), corresponding to \(d\)-planes orthogonal to the \(i\)th coordinate vector.

In the second case $d\leq \abs{I}$, and so $\ell=d-1$. In order to have codimension $d$ it must be the case that $\abs{I}=2d-1$. When $n=2d$ there are again $n$ of these indexed by the $n$ complements $J_i:=\{1,\ldots,n\}\minus \{i\}$. These are $N_i:=Z_{d-1,J_i}$. The $N_i$ also have a very simple description. Precisely, $N_i$ corresponds to $d$-planes containing $e_i$. To see this observe that membership $F\in N_i$ means that the projections of $\{e_j\}_{j\neq i}$ onto $F$ fail to span $F$. Hence there is a non-zero $v\in F$ with $\langle v,P_F(e_j)\rangle=0$ for $P_F$ the orthogonal projection operator and $j\neq i$. Computing we find 

\[
\langle v,e_j\rangle=\langle P_F(v),e_j\rangle = \langle v,P_F(e_j)\rangle=0
\]

Hence $v$ is orthogonal to $\{e_j\}_{j\neq i}$ and so $v$ is a multiple of $e_i$. 

When $n=2d+1$ there are $\binom{n}{2}$ such components indexed by $J_{i,j}:=\{1,\ldots,n\}\minus\{i,j\}$ and defined as $D_{i,j}:=Z_{d-1,J_{i,j}}$. $D_{i,j}$ can similarly be described as $d$-planes that intersect the $ij$-coordinate plane non-trivially.

If $n\geq 2d+2$, then the instability inequality and the fact that $d>2$ imply
\[
d\abs{I}=2d^2-d < 2d^2-2=(2d+2)(d-1)\leq n\ell.
\]
But then $\abs{I} < \frac{n\ell}{d}$, so these rank conditions do not define unstable strata when $n\geq 2d+2$, and hence do not need to be considered. We summarize this discussion with the following lemma.

\begin{lemma}\label{lem:enumerate_top_dim_pieces}
    Let $d>2$ and $n\geq 2d$.
    \begin{itemize}
        \item Each $Z_i$ is an unstable codimension-$d$ component of $Z$.
        \item If $n\geq 2d+2$, there are no additional codimension-$d$ components.
        \item If $n=2d+1$, each $D_{i,j}$ is also a codimension-$d$ component, for $1\leq i<j\leq n$.
        \item If $n=2d$, each $N_i$ is also a codimension-$d$ component.
    \end{itemize}
\end{lemma}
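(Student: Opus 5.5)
The plan is to combine the codimension formula \eqref{eq:codim_formula} with the classification of equality cases from the proof of Theorem \ref{thm:possible_codimensions}. For each $Z_{\ell,I}$ occurring in $Z$ we want to check three things: that it is unstable, that its codimension is exactly $d$, and that it is not contained in a larger stratum of $Z$. The last point matters because a codimension-$d$ rank locus sitting inside a strictly bigger piece of $Z$ would not be a component.

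\textbf{Step 1: the loci $Z_i$.} For $Z_i=Z_{0,\{i\}}$, instability reads $1>0$, which always holds. The codimension is $(d-0)(1-0)=d$. By Theorem \ref{thm:possible_codimensions}, every $Z_{\ell,I}$ has codimension at least $d$, so no rank locus of $Z$ can strictly contain $Z_i$ with smaller codimension. Moreover, $Z_i$ is irreducible: it is the Grassmannian of $d$-planes inside $e_i^\perp$, hence smooth and connected. So $Z_i$ is an irreducible closed set of the minimal possible codimension inside $Z$. It therefore cannot lie properly inside another irreducible piece of dimension equal to its own, and so it is a top-dimensional component.

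\textbf{Step 2: which loci have codimension exactly $d$.} Equality in the chain of inequalities from Theorem \ref{thm:possible_codimensions} requires both $m-\ell=\ell+1$ and $\ell(d-1-\ell)=0$. If $\ell=0$, then $m=1$. If $\ell=d-1$, then $m=2d-1$; this is the only way to get equality with $\ell\ge 1$, since for $0<\ell<d-1$ the second term is positive. Instability for the second case requires $2d-1>(d-1)n/d$, i.e.\ $d(2d-1)>n(d-1)$.
\begin{itemize}
\item For $n\ge 2d+2$, the displayed computation before the lemma shows this inequality fails. Hence no further codimension-$d$ loci occur, which gives the second bullet.
\item For $n=2d+1$, we check $2d^2-d>(2d+1)(d-1)=2d^2-d-1$, which holds. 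The subsets $I$ of size $2d-1$ are exactly the $J_{i,j}$, giving the $D_{i,j}$.
\item For $n=2d$, we check $2d^2-d>2d^2-2d$, which holds. The subsets $I$ of size $2d-1$ are the $J_i$, giving the $N_i$.
\end{itemize}
In each case the codimension is $(d-(d-1))(2d-1-(d-1))=d$.

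\textbf{Step 3: these loci are genuine components.} It remains to see that the $D_{i,j}$ and $N_i$ are irreducible and not contained in one another or in some $Z_k$. By the description in the text, $N_i$ is the set of planes containing $e_i$. This set is homeomorphic to (a double cover of) a Grassmannian of $(d-1)$-planes in $e_i^\perp$, and it is irreducible of codimension $d$ as a Schubert variety. Similarly, $D_{i,j}$ is the Schubert variety of planes meeting a fixed $2$-plane. Two distinct irreducible closed sets of the same dimension cannot contain one another. The $N_i$ are distinct from the $Z_k$: take a generic plane through $e_i$; it is not orthogonal to any $e_k$, and it contains no $e_k$ for $k\ne i$. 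The same kind of argument separates the $D_{i,j}$ from the $Z_k$ and from each other.

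The main care point is the irreducibility and smoothness of the real Schubert varieties and their orientation double covers. I would handle this by citing \cite{FultonYoungTableaux} for irreducibility of the complex Schubert cells and checking directly, using the explicit descriptions above, that the real loci are connected manifolds on a dense open subset.
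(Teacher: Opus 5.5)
Your proposal is correct and follows essentially the same route as the paper. The paper also reads off the only codimension-$d$ cases $(\ell,|I|)=(0,1)$ and $(d-1,2d-1)$ from the equality cases in Theorem \ref{thm:possible_codimensions}, and then checks instability of the second case separately for $n=2d$, $n=2d+1$ and $n\geq 2d+2$; your Steps 1 and 3 (irreducibility and mutual non-containment of the $Z_i$, $N_i$, $D_{i,j}$) go slightly beyond the paper, which treats each unstable codimension-$d$ rank locus as a component without further comment.
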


\subsection{Relations among the generators}\label{sec:relations}

In this section we discuss possible relations amongst the generators of $\pi_d(\Grtil_{d,n},\E_{d,n})$ coming from Theorem \ref{thm:d-1_connected_plus_generators}. Relations amongst pairs of meridional generators can occur if there are strata that intersect a particular component. The goal of this section is to understand precisely how this can happen. We restrict the following analysis to $n=2d$ and $n\geq 2d+2$.

\begin{proposition}\label{prop:codimension_of_intersections1}
    Assume $d>2$ and $2d+1\neq n\geq 2d$. Let $S$ be one of the codimension-$d$ strata of $Z$ and let $Z_{\ell,I}$ be one of the other unstable strata. The codimension of $Z_{\ell,I}\cap S$ in $S$ is at least 2. 
\end{proposition}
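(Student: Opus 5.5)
The plan is to compute everything inside the smaller Grassmannian that each codimension-$d$ stratum already is. By Lemma~\ref{lem:enumerate_top_dim_pieces}, when $n\geq 2d+2$ the only codimension-$d$ strata are the $Z_i$. When $n=2d$ they are the $Z_i$ and the $N_i$. This is the only place the hypothesis $n\neq 2d+1$ enters: it rules out the $D_{i,j}$, which do not have a clean product description.

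Both kinds of strata are Grassmannians of smaller dimension:
\begin{itemize}
\item $Z_i=\{F\subset e_i^\perp\}$ is a copy of $\Grtil_{d,n-1}$.
\item $N_i=\{F=\langle e_i\rangle\oplus F',\ F'\subset e_i^\perp\}$ is a copy of $\Grtil_{d-1,n-1}$.
\end{itemize}
In each case I will rewrite the condition $\dim(F\cap E_{I^c})\geq d-\ell$ defining $Z_{\ell,I}$ as a single Schubert condition of the same type in the smaller Grassmannian. Then I will apply the codimension formula \eqref{eq:codim_formula} there, with adjusted parameters $(d',\ell',I')$. If the rewritten condition is empty, the intersection is empty and there is nothing to prove.

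\emph{Case $S=Z_i$.} Since $F\subset e_i^\perp$, we have $F\cap E_{I^c}=F\cap E_{I^c\setminus\{i\}}$. Inside $\RR^{n-1}=e_i^\perp$ this is the rank locus for the index set $I\setminus\{i\}$ with the same $\ell$, so
\[
\operatorname{codim}_S(Z_{\ell,I}\cap S)=(d-\ell)\bigl(|I\setminus\{i\}|-\ell\bigr).
\]
\begin{itemize}
\item If $i\notin I$, this equals $(d-\ell)(|I|-\ell)$. That is at least $d\geq 3$ by the computation in Theorem~\ref{thm:possible_codimensions}, which only used $|I|>\ell n/d\geq 2\ell$.
\item If $i\in I$ and $\ell\geq 1$, then $|I|-1-\ell\geq \ell$, so the codimension is at least $(d-\ell)\ell\geq d-1\geq 2$.
\item If $i\in I$ and $\ell=0$, then $Z_{0,I}\neq S$ forces $|I|\geq 2$, so the codimension is at least $d$.
\end{itemize}

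\emph{Case $S=N_i$ (so $n=2d$).} Write $F=\langle e_i\rangle\oplus F'$.
\begin{itemize}
\item If $i\in I$, then any vector $ae_i+f$ with $f\in F'$ that lies in $E_{I^c}$ must have $a=0$. Hence $F\cap E_{I^c}=F'\cap E_{I^c}$. In $\Grtil_{d-1,n-1}$ this is the Schubert condition with $d'=d-1$, $\ell'=\ell-1$ and $|I'|=|I|-1$. The codimension is $(d-\ell)(|I|-\ell)\geq d$, or the intersection is empty when $\ell=0$.
\item If $i\notin I$, then $\dim(F\cap E_{I^c})=1+\dim(F'\cap E_{I^c\setminus\{i\}})$. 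This gives the condition with $d'=d-1$, $\ell'=\ell$ and $|I'|=|I|$, of codimension $(d-1-\ell)(|I|-\ell)$. When $\ell=d-1$ the condition is vacuous, so $N_i\subset Z_{d-1,I}$. The instability bound $|I|>2(d-1)$ together with $i\notin I$ then forces $I=J_i$, which is $S$ itself and is excluded. When $\ell\leq d-2$, we have $|I|-\ell\geq \ell+1$, so the codimension is at least $(d-1-\ell)(\ell+1)\geq d-1\geq 2$.
\end{itemize}

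The step I expect to need the most care is the bookkeeping in the reduction to the smaller Grassmannian. I must check that the dimension shifts are right in both the $i\in I$ and $i\notin I$ sub-cases, and that the codimension formula from \cite[\S 9.4]{FultonYoungTableaux} applies verbatim in $\Grtil_{d',n-1}$, including the degenerate cases where the condition is empty or all of $S$. The arithmetic lower bounds themselves are the same as in Theorem~\ref{thm:possible_codimensions}, and they use $d>2$ only to get $d-1\geq 2$.
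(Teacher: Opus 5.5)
Your proposal is correct and follows essentially the same route as the paper: the same two cases ($S=Z_i$ and, for $n=2d$, $S=N_i$), the same sub-cases $i\in I$ versus $i\notin I$, the same reduced codimensions $(d-\ell)(\abs{I}-1-\ell)$, $(d-\ell)(\abs{I}-\ell)$ and $(d-1-\ell)(\abs{I}-\ell)$, and the same lower bounds, including excluding $I=J_i$ when $\ell=d-1$. The only difference is presentation: you make explicit the reduction to a single Schubert condition in $\Grtil_{d,n-1}$ or $\Grtil_{d-1,n-1}$, and you note that the intersection is empty in the $N_i$, $i\in I$, $\ell=0$ case; the paper phrases the same count informally as asking a set of projected vectors to span a subspace of bounded dimension.
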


\begin{proof}
    The proof breaks into 2 cases depending on the type of $S$. 
  
    First, suppose $S=Z_i$. Each point in $Z_i$ is a $d$-plane orthogonal to $e_i$. If $i\notin I$ then none of the projections of the vectors indexed by $I$ are forced to be trivial and so we are asking for $\abs{I}$ vectors to span a space of dimension at most $\ell$, and hence the codimension $c_{\ell,I}:=(\abs{I}-\ell)(d-\ell)\geq d$. When $i\in I$ the projection of one of the vectors is forced to be trivial and so we are asking $\abs{I}-1$ vectors to span a space of dimension at most $\ell$ and so the codimension is $(\abs{I}-1-\ell)(d-\ell)=c_{\ell,I}-(d-\ell)$. 

    Since $n\geq 2d$, the instability condition implies
$\abs{I}\geq 2\ell+1$. Thus, when $i\in I$ and
$\ell\geq 1$, the codimension is at least
\[
(\abs{I}-1-\ell)(d-\ell)
\geq \ell(d-\ell)
\geq d-1\geq 2.
\]
When $\ell=0$, the case $I=\{i\}$ is excluded,
and otherwise the codimension is at least $d$.

    This proves the required bound for $S=Z_i$. 

    We now move on to the second case, where $S=N_i$, which implies $n=2d$. Points of $S$ are $d$-planes that contain $e_i$. Let $P$ be the orthogonal projection onto this $d$-plane. If $j\neq i$, then 
    \[
    \langle e_i,P(e_j)\rangle=\langle P(e_i),e_j\rangle=\langle e_i,e_j\rangle=0.
    \]
    
    If $i\in I$, then being in $Z_{\ell,I}$ is asking for $\abs{I}-1$ vectors to lie in a subspace of dimension at most $\ell-1$ in the $(d-1)$-dimensional orthogonal complement to $e_i$, and the codimension is 
    \[(\abs{I}-1-(\ell-1))((d-1)-(\ell-1))=c_{\ell,I}\geq d > 2.\]
    
    If $i\notin I$ then we are asking for $\abs{I}$ vectors to span a space of dimension at most $\ell$ in the $(d-1)$-dimensional complement of $e_i$. Thus the codimension is $(\abs{I}-\ell)(d-1-\ell)=c_{\ell,I}-(\abs{I}-\ell)$. For $0\leq\ell\leq d-2$, the instability
condition gives $\abs{I}\geq 2\ell+1$, and hence
\[
(\abs{I}-\ell)(d-1-\ell)
\geq(\ell+1)(d-1-\ell)
\geq d-1\geq 2.
\]
If $\ell=d-1$, instability forces
$\abs{I}\geq 2d-1$. Since $i\notin I$,
we must have
$I=\{1,\ldots,2d\}\minus\{i\}$,
which defines $N_i$ itself and is excluded.

    \end{proof}

    Using this result we can compute $\pi_d(\Grtil_{d,n},\Grtil_{d,n}\minus Z)$ when $n=2d$ or $n\geq 2d+2$. 

\begin{proposition}\label{prop:homotopy_of_stiefel_pair}
    Let $d>2$.
    \begin{itemize}
        
        \item If $n\geq 2d+2$, then $\pi_d(\Grtil_{d,n},\Grtil_{d,n}\minus Z)\cong \ZZ^n$ with generators given by meridional disks around each of the $Z_i$.
        \item If $n= 2d$, then $\pi_d(\Grtil_{d,n},\Grtil_{d,n}\minus Z)\cong \ZZ^{2n}$ with generators given by meridional disks around each of the $Z_i$ and the $N_i$.
    \end{itemize}
\end{proposition}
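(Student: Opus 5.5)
The plan is to compute $H_d(\Grtil_{d,n},\Grtil_{d,n}\minus Z)$ and then pass to homotopy by Hurewicz. Theorem~\ref{thm:d-1_connected_plus_generators} gives that the pair is $(d-1)$-connected and that $\Grtil_{d,n}$ is simply connected. The homotopy equivalence of Theorem~\ref{thm:homotop_equivalence} lets me replace $\E_{d,n}$ by $\Grtil_{d,n}\minus Z$. It also shows that $\Grtil_{d,n}\minus Z$ is simply connected: since $Z$ has codimension $d>2$, general position lets loops and null-homotopies avoid $Z$. So the relative Hurewicz theorem gives $\pi_d\cong H_d$.

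I would compute $H_d$ by excision and duality. Let $Z_{top}$ be the union of the codimension-$d$ components listed in Lemma~\ref{lem:enumerate_top_dim_pieces}: the $Z_i$, and for $n=2d$ also the $N_i$. Let $Z'$ be the union of all other strata of $Z$ together with the pairwise intersections of the top components. By Theorem~\ref{thm:possible_codimensions}, any other $Z_{\ell,I}$ not of top type has codimension at least $d+1$. By Proposition~\ref{prop:codimension_of_intersections1}, its intersection with a top component has codimension at least $d+2$ in $\Grtil_{d,n}$. Pairwise intersections of top components fall under the same proposition, or can be checked directly: $Z_i\cap Z_j$, $Z_i\cap N_j$ and $N_i\cap N_j$ each impose at least one extra condition, and since the relevant ranks are $d\geq 3$ each such condition has codimension at least $2$ inside the component. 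So $Z'$ has codimension at least $d+1$.

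The long exact sequence of the triple $(\Grtil_{d,n},\Grtil_{d,n}\minus Z',\Grtil_{d,n}\minus Z)$, combined with general position (maps of $(d+1)$-chains miss a codimension-$\geq d+2$ set, and $d$-chains miss a codimension-$\geq d+1$ set), shows that $H_d(\Grtil_{d,n},\Grtil_{d,n}\minus Z)\cong H_d(W,W\minus Z_{top})$, where $W=\Grtil_{d,n}\minus Z'$. On $W$, $Z_{top}$ is a disjoint union of smooth closed codimension-$d$ submanifolds $Z_i^\circ$ and $N_i^\circ$; smoothness comes from the open Schubert cells. By excision onto tubular neighborhoods and the Thom isomorphism, $H_d(W,W\minus Z_{top})\cong\bigoplus H_0(\text{component})$, with twisted coefficients if a normal bundle is non-orientable. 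Each generator is a meridional disk.

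The main obstacle is that each $Z_i^\circ$ and $N_i^\circ$ must be \textbf{connected}, with \textbf{orientable normal bundle}; otherwise the rank is wrong, or some summand is $\ZZ_2$. The normal bundle to $Z_i$ at $F$ is naturally $\Hom(\text{span}(e_i),F)\cong F$, because $Z_i$ is the zero locus of the transverse section $s_i$ of $\tau_{d,n}$. This normal bundle is oriented by the orientation of $F$. Similarly $N_i$ is the zero locus of the section $P_{F^\perp}e_i$ of $\tau^\perp$, which has rank $n-d=d$ and is oriented. Removing $Z'$ does not disconnect, since $Z'$ has codimension $\geq2$ in each component. Connectedness of the oriented loci $\{F\perp e_i\}\cong\Grtil_{d,n-1}$ and $\{e_i\in F\}\cong\Grtil_{d-1,n-1}$ is standard. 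Counting components gives $\ZZ^n$ when $n\geq 2d+2$ and $\ZZ^{2n}$ when $n=2d$.
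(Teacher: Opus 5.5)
Your argument is correct, but it takes a different route from the paper. The paper works cohomologically. It applies Alexander--Lefschetz duality once, $H_d(\Grtil_{d,n},\Grtil_{d,n}\minus Z)\cong H^{q}(Z)$ with $q=K-d$, and then runs two Mayer--Vietoris arguments inside $Z$. The first uses $Z=Z^+\cup Z^-$, where $H^{q-1}(Z^+\cap Z^-)$ and $H^q(Z^-)$ vanish for dimension reasons. The second splits $H^q(Z^+)$ over its components, each asserted to be a closed connected oriented manifold contributing a copy of $\ZZ$.

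You stay on the homology side. You discard the small set $Z'$ using the triple sequence and general position, and then read off $H_d(W,W\minus Z_{top})$ from tubular neighborhoods and the homology Thom isomorphism. Both proofs use exactly the same input: the equality analysis after Theorem~\ref{thm:possible_codimensions} and Proposition~\ref{prop:codimension_of_intersections1}. Your codimension-$(d+2)$ bound on $Z'\cap Z_{top}$ plays precisely the role of the vanishing of $H^{q-1}(Z^+\cap Z^-)$.

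Your version buys several things:
\begin{itemize}
\item The meridional disks appear directly as Thom generators, i.e.\ fibers of the normal bundles, rather than as Poincar\'e duals of fundamental classes.
\item The orientability input becomes explicit through $\nu_{Z_i}=\tau_{d,n}|_{Z_i}$ and $\nu_{N_i}=\tau^\perp_{d,2d}|_{N_i}$.
\item You justify the simple connectivity of $\Grtil_{d,n}\minus Z$ that relative Hurewicz needs, which the paper leaves implicit.
\item You never need duality or Mayer--Vietoris for the singular compact set $Z$ itself.
\item It is transparent where relations come from when $n=2d+1$: there $Z'\cap Z_{top}\supset N_i$ has codimension only $d+1$.
\end{itemize}
The paper's version is shorter.

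One step you should unpack. $H_{d+1}(\Grtil_{d,n},\Grtil_{d,n}\minus Z')$ need not vanish, because $Z'$ can have codimension exactly $d+1$. So injectivity of $H_d(W,W\minus Z_{top})\to H_d(\Grtil_{d,n},\Grtil_{d,n}\minus Z)$ requires showing that the connecting map of the triple is zero. The mechanism is the one your parenthetical points to. A transverse $(d+1)$-chain misses $Z'\cap Z_{top}$ and meets $Z'$ only in isolated points of $Z'\minus Z_{top}$. The small linking spheres at those points lie in $\Grtil_{d,n}\minus Z$ and can be absorbed into the relative part. Equivalently, this follows from $H_{d+1}(\Grtil_{d,n},\Grtil_{d,n}\minus(Z'\cap Z_{top}))=0$ plus excision. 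As you wrote it, the parenthetical reads as if $(d+1)$-chains simply miss $Z'$, which they do not.
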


\begin{proof}

    Let $q=K-d$ where $K=\dim(\Grtil_{d,n})$. By Theorem \ref{thm:d-1_connected_plus_generators}, we have shown that the pair $(\Grtil_{d,n},\Grtil_{d,n}\minus Z)$ is $(d-1)$-connected. By relative Hurewicz it follows that $\pi_d(\Grtil_{d,n},\Grtil_{d,n}\minus Z)\cong H_d(\Grtil_{d,n},\Grtil_{d,n}\minus Z)$. Applying Alexander–Lefschetz duality it follows that $H_d(\Grtil_{d,n},\Grtil_{d,n}\minus Z)\cong H^{q}(Z)$, 

Let $Z^+$ be the union of the top dimensional pieces of $Z$ and $Z^-$ the union of the remaining pieces. We can apply Mayer-Vietoris to the decomposition $Z=Z^+\cup Z^-$.

\[
H^{q-1}(Z^+\cap Z^-)
\longrightarrow H^q(Z)
\longrightarrow H^q(Z^+)\oplus H^q(Z^-)
\longrightarrow H^q(Z^+\cap Z^-).
\]

However, we know that $\dim Z^-\leq q-1$ and $\dim(Z^+\cap Z^-)\leq q-2$. This means that the outer 2 groups and $H^q(Z^-)$ all vanish and we find that $H^q(Z)\cong H^q(Z^+)$.

Any nonempty intersection of pairs of distinct components of $Z^+$
is an oriented Grassmannian. When $n\geq 2d+2$, the
components are the $Z_i$, and
$Z_i\cap Z_j\cong\Grtil_{d,n-2}$ for $i\neq j$,
which has codimension $d$ in $Z_i$.

When $n=2d$, the components are the $Z_i$ and $N_i$.
Intersections of two distinct components of the same type
have codimension $d$, while $Z_i\cap N_j$ has codimension
$d-1$ for $i\neq j$ and is empty otherwise.

Since $d>2$, all intersections of distinct components
have codimension at least two. Repeated application of
a Mayer-Vietoris argument similar to the one above therefore gives
\[
H^q(Z^+)\cong\bigoplus_S H^q(S),
\]
where $S$ ranges over the components of $Z^+$.
Each $S$ is a closed, connected, oriented manifold, so
each summand is isomorphic to $\ZZ$ and the result follows by applying Lemma \ref{lem:enumerate_top_dim_pieces}.

\end{proof}

In the $n=2d+1$ case we are not able to compute $\pi_d(\Grtil_{d,n},\Grtil_{d,n}\minus Z)$. However, we are able to deduce some relations inside this group.

The main idea is that the generators of $\pi_d(\Grtil_{d,n},\Grtil_{d,n}\minus Z)$ come from top dimensional pieces. One source of relations occurs when two pieces intersect and their intersection has codimension 1 inside the respective pieces. When $n\geq 2d$ with $n\neq 2d+1$ we were able to rule out such intersections. When $n=2d+1$ these types of intersections can occur. More precisely $N_j\subset D_{ij}\cap D_{jk}$ and has codimension 1 inside $D_{ij}$ and $D_{jk}$.

We start by describing a convenient coordinate system for the normal bundle to $N_i$ at a ``generic'' point. In this case, define $N_i=\{F\in\Grtil_{d,2d+1}\mid e_i\in F\}$, and let $F\in N_i$ be generic in the sense that if we orthogonally decompose as $F=W\oplus \RR e_i$ then all of the Pl\"ucker coordinates of $W$ are non-zero. This condition ensures that a sufficiently small disk centered $F$ will be disjoint from all components of $Z$ other than $N_i$ and $D_{ij}$ with $j\neq i$.

Next, we construct coordinates for a normal disk to $N_i$ in $\Grtil_{d,2d+1}$ at $F$. Since $e_i\in F$ we can write $F=W\oplus \RR e_i$, where $W$ is the orthogonal complement of $e_i$ in $F$. A small neighborhood of the origin in the $(d+1)-$dimensional space $F^\perp$ provides a normal disk via the map $u\mapsto W\oplus \RR(e_i+u)$.

First observe that if $G\in \Grtil_{d,2d+1}\minus (N_i\cup N_j)$ we can define nowhere zero sections $\tau_i,\tau_j:$ $\Grtil_{d,2d+1}\minus (N_i\cup N_j)$ $\to \tau_{d,2d+1}^\perp$ where $\tau_i(G)$ is the normalized orthogonal projection of $e_i$ onto $G^\perp$. We can normalize these sections to have unit length everywhere and we will. When $G\in D_{ij}\minus (N_i\cup N_j)$ then we can find $a,b\neq 0$ so that $ae_i+be_j\in G$. Taking orthogonal projections and normalizing tells us that $\tau_i(G)=\pm \tau_j(G)$. It follows that $D_{ij}\minus (N_i\cup N_j)$ consists of two components depending on whether $\tau_i=\tau_j$ or $\tau_i=-\tau_j$. We henceforth refer to these components as $D_{ij}^+$ and $D_{ij}^-$, respectively.

Next, we analyze how $D_{ij}$ intersects this disk near $F$. Define $p_j\in F^\perp$ to be the orthogonal projection of $e_j$ onto $F^\perp$. Since $F$ is generic we have $p_j\neq 0$. Suppose that $G\in D_{ij}$ lies in this disk, then $G=W+\RR(e_i+u)$ for some small $u\in F^\perp$. On the other hand, since $G\in D_{ij}$ we see that $G$ contains a vector of the form $v=ae_i+be_j$. Since $e_j\notin G$ there is no harm in assuming that $a=1$. Thus we find that $v=e_i+be_j$. On the other hand $v=w+c(e_i+u)$ for some $w\in W$. There is an orthogonal decomposition $\RR^{2d+1}=F^\perp \oplus W\oplus \RR e_i$ with respect to which the second description of $v$ is written. It follows that $c=1$ and $u=P_{F^\perp}(v)=bp_j$. Conversely, if $u=bp_j$ for some sufficiently small $b$ then we can argue similarly to see that the plane coming from $u$ lies in $D_{ij}$. It follows that the intersection of $D_{ij}$ with the normal disk corresponds precisely to (a portion of) the line in $F^\perp$ spanned by $p_j$.

More precisely, suppose that $u=tp_j$ and $G_u=W\oplus \RR(e_i+u)$. Using the splitting $\RR^{2d+1}=F^\perp\oplus W\oplus \RR e_i$ we see that we can write $e_j=p_j+w_j$, where $w_j\in W$ (here we use the fact that $e_i$ and $e_j$ are orthogonal). It follows that
\[v=e_i+te_j=tw_j+e_i+tp_j=tw_j+e_i+u\]
and hence $v\in G_u$. Taking orthogonal projections onto $G_u^\perp$ we find that $G_u\in D_{ij}^+$ (resp.\ $D_{ij}^-$) if and only if $t<0$
(resp. $t>0$).

Repeating for each $j\neq i$ we see a local picture of a normal $(d+1)$-disk to $N_i$ at $F$ whose intersections with the $D_{ij}$ give $2d$ diameters. Our genericity condition also ensures that these diameters are all distinct. The diameters intersect the boundary $d$-sphere in $4d$ points. These $4d$ points correspond to $D_{ij}^\pm$ for each $j\neq i$. Small disks centered at these points give rise to elements $\{d_{i,j}^\pm\}_{j\neq i}$ of $\pi_d(\Grtil_{d,2d+1},\Grtil_{d,2d+1}\minus Z)$, here the disks are oriented using the orientation of the boundary of the normal.

By relative Hurewicz we have
\[\pi_d(\Grtil_{d,2d+1},\Grtil_{d,2d+1}\minus Z)\cong H_d(\Grtil_{d,2d+1},\Grtil_{d,2d+1}\minus Z).\]

The complement of these $4d$ disks leaves a subset of the complement of $Z$. Consequently, in relative homology the boundary sphere represents the sum of the meridional disks, oriented by the sphere. Since this sphere bounds the normal disk, the sum of these meridians vanishes. More precisely we get a relation $R_i=\sum_{j\neq i}(d_{i,j}^+ + d_{i,j}^-)=0$. 

\section{The stable case}\label{sec:proof_of_main}

Let $n\geq 2d+2$ and $d>2$, and let $E\to M$ be an orientable rank-$d$ vector bundle over a $d$-manifold $M$. Given Proposition \ref{prop:homotope_to_equal_norm}, in order to produce an equal norm Parseval frame of size $n$ we need to homotope a classifying map $f:M\to \Grtil_d$ to have image in $\E_{d,n}$. Since $n > 2d$, we can apply Theorem \ref{thm:parseval_frames_exist} to find a Parseval frame field $\{\sigma_1,\ldots, \sigma_{n}\}$. This is equivalent to deforming the classifying map $f$ to have image in $\Grtil_{d,n} \subset \Gr_d$.

We know from Theorem \ref{thm:d-1_connected_plus_generators} that $(\Grtil_{d,n},\E_{d,n})$ is $(d-1)$-connected. Thus it is possible to further deform $f$ so that it maps the $(d-1)$-skeleton of $M$ into $\E_{d,n}$. By Proposition \ref{prop:homotopy_of_stiefel_pair}, we also know that $\pi_d(\Grtil_{d,n},\E_{d,n})\cong \ZZ^n$. Hence,
by the discussion from Section \ref{sec:obstruction_theory} we have a primary obstruction class $o_{ENF}(E)\in H^d(M,\ZZ^n)$ that vanishes precisely when we can homotope the classifying map so the $d$-skeleton (and hence all of $M$) has image in $\E_{d,n}$. This obstruction can be cast in a more universal language using $\Grtil_{d,n}$. Let $o_{ENF}\in H^d(\Grtil_{d,n},\E_{d,n};\pi_d(\Grtil_{d,n},\E_{d,n}))$ be the primary obstruction to deforming the $d$-skeleton of $\Grtil_{d,n}$ to have image in $\E_{d,n}$. If $f:M\to \Grtil_{d,n}$ is a classifying map for a bundle $E\to M$, then $o_{ENF}(E)=f^\ast(o_{ENF}).$

Our next goal is to recast the obstruction in terms of the Euler class. We have seen 
\[
\pi_d(\Grtil_{d,n},\E_{d,n})\cong \pi_d(\Grtil_{d,n},\Grtil_{d,n}\minus Z)\cong \ZZ^n,
\]
with generators coming from meridional disks $\mu_i$ around the top dimensional components $Z_i$. Using these coordinates and projecting, we get maps 
\[
p_i:H^d(\Grtil_{d,n},\E_{d,n};\pi_d(\Grtil_{d,n},\E_{d,n}))\to H^d(\Grtil_{d,n},\E_{d,n};\ZZ).
\]
We also write $p_i$ for the induced coefficient projections $H^d(M;\ZZ^n)\to H^d(M;\ZZ)$. Orient each meridional disk using the orientation of the tautological bundle via the derivative of its defining section. The next lemma describes how $p_i(o_{ENF}(E))$ is related to the Euler class.

 \begin{lemma}\label{lem:relation_to_Euler}
     Let $n$, $d$, and $E$ be as above. Then for each $i$,
     \begin{itemize}
         \item $p_i(o_{ENF}(E))=e(E)$
         \item $\bigcap_i \ker p_i=0$.
     \end{itemize}
 \end{lemma}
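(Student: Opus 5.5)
Both bullets can be handled by the same device: identify each projected universal class with a Thom-class pullback, then pull back along the classifying map.

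\textbf{First bullet.} The key observation is that $Z_i=Z_{0,\{i\}}$ is exactly the zero locus of the canonical section $s_i$ of $\tau_{d,n}$, where $s_i(P)=P_Pe_i$. We verify first that $s_i$ is transverse to the zero section. Near a point of $Z_i$, the derivative of $s_i$ in the normal directions is an isomorphism onto the fiber. Indeed, in the chart $u\mapsto$ graph of $u:P\to P^\perp$, the projection of $e_i$ (with $e_i\in P^\perp$) moves to first order by $u^\ast e_i$. Since $u^\ast$ ranges over all of $\Hom(P^\perp,P)$, this map is surjective. By Proposition \ref{prop:poincare_dual_euler}, the pullback $s_i^\ast(u_{\tau})$ then lies in $H^d(\Grtil_{d,n},\Grtil_{d,n}\minus Z_i)$ and is Poincaré dual to $[Z_i]$.

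Restricting along $(\Grtil_{d,n},\E_{d,n})\subset(\Grtil_{d,n},\Grtil_{d,n}\minus Z_i)$ produces a class $\theta_i\in H^d(\Grtil_{d,n},\E_{d,n};\ZZ)$. Its image in absolute cohomology is $e(\tau_{d,n})$. We then claim that $p_i(o_{ENF})=\theta_i$. The argument goes through universal coefficients and Hurewicz: since the pair is $(d-1)$-connected, $H^d(\Grtil_{d,n},\E_{d,n};G)\cong\Hom(H_d,G)\cong\Hom(\pi_d,G)$. The primary obstruction of the identity corresponds, under this identification, to the identity homomorphism of $\pi_d$, the standard "characteristic class" description. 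Composing with $p_i$ gives the functional on $\pi_d\cong\ZZ^n$ that reads off the $\mu_i$-coordinate. Now evaluate $\theta_i$ on the generators. On $\mu_j$ it equals the local index of $s_i$ on that disk, which is $1$ for $j=i$ with the chosen orientation. It is $0$ for $j\neq i$, since a small meridional disk around a generic point of $Z_j$ avoids $Z_i$. So both classes are the same functional, and $p_i(o_{ENF})=\theta_i$. Pulling back by a classifying map $f:M\to\Grtil_{d,n}$, and passing to absolute cohomology of $M$ (the sense in which $o_{ENF}(E)=f^\ast o_{ENF}$ is used), gives $p_i(o_{ENF}(E))=f^\ast e(\tau_{d,n})=e(E)$.

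\textbf{Second bullet.} This is just coefficient algebra. Writing $H^d(M;\ZZ^n)=\bigoplus_i H^d(M;\ZZ)$ via the $n$ projections, the maps $p_i$ are exactly the coordinate maps. So a class killed by every $p_i$ has all coordinates zero, and $\bigcap_i\ker p_i=0$.

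\textbf{Main obstacle.} The delicate step is the identification $p_i(o_{ENF})=\theta_i$. It requires the evaluation of the obstruction cocycle on relative cycles to match the Hurewicz identification of meridional disks, together with consistent orientation conventions, so that the sign of the local index is $+1$. I would handle this by working entirely in a single normal disk to a generic point of $Z_i$, where both classes are computed by the local degree of $s_i$.
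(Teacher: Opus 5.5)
Your proposal is correct and takes essentially the same route as the paper. Both arguments identify $Z_i$ as the transverse zero locus of the projection section and apply Proposition~\ref{prop:poincare_dual_euler} with $A=\E_{d,n}$; they match $p_i(o_{ENF})$ with the Thom-class pullback via the Kronecker pairings $\delta_{ij}$ against the meridians (using universal coefficients and $(d-1)$-connectivity), then pull back along the classifying map, with your explicit transversality computation and your $H^d(M;\ZZ^n)$-level treatment of the second bullet only adding detail to steps the paper asserts or handles universally.
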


 \begin{proof}

For $1\leq i\leq n$ there is a section  $\sigma_i:\Grtil_{d,n}\to \tau_{d,n}$ of the tautological bundle defined as follows. For each $P\in \Grtil_{d,n}$ we define $\sigma_i(P)$ to be the orthogonal projection of the $i$th standard basis vector of $\RR^n$ onto $P$. If we let $Z_{\sigma_i}$ be the zero set of this section, we know that it is closely related to the $i$th top-dimensional component $Z_i$ of $Z$. In particular $Z_i=Z_{\sigma_i}$. Each $Z_{\sigma_i}$ is a $(K-d)$-dimensional manifold where $K=\dim(\Grtil_{d,n})$.  In fact, $Z_{\sigma_i}$ can be identified with a lower dimensional Grassmannian $\Grtil_{d,n-1}$.

The section $\sigma_i:\Grtil_{d,n}\to \tau_{d,n}$ is transverse to the zero section and $\E_{d,n}\subset \Grtil_{d,n}\minus Z_{\sigma_i}$. Applying Proposition \ref{prop:poincare_dual_euler} with $A=\E_{d,n}$, we find that $PD([Z_{\sigma_i}])=\sigma_i^\ast(u_{\tau_{d,n}})\in H^d(\Grtil_{d,n},\E_{d,n})$. By Hurewicz $\pi_d(\Grtil_{d,n},\E_{d,n})\cong H_d(\Grtil_{d,n},\E_{d,n})$, with Proposition \ref{prop:homotopy_of_stiefel_pair} furnishing generators coming from meridional disks $[\mu_i]$ around the $Z_{\sigma_i}$. By applying universal coefficients along with $(d-1)$-connectivity of the pair we see that classes in $H^d(\Grtil_{d,n},\E_{d,n})$ are completely determined by their Kronecker pairing with the $[\mu_i]$. By construction, we have $\langle p_i(o_{ENF}),[\mu_j]\rangle =\delta_{ij}$. On the other hand, since $[\mu_i]$ is a meridional disk around $Z_{\sigma_i}$ we have $\langle PD([Z_{\sigma_i}]),[\mu_j]\rangle=\delta_{ij}$ as well. Hence $p_i(o_{ENF})=\sigma_i^\ast(u_{\tau_{d,n}})$.

Next, let $f:M\to \Grtil_{d,n}$ be a classifying map for $E$. Then we have

\[
p_i(o_{ENF}(E))=p_i(f^\ast o_{ENF})=f^\ast(p_i(o_{ENF}))=f^\ast(\sigma_i^\ast(u_{\tau_{d,n}}))=e(E).
\]

Finally, by universal coefficients we have 
\[H^d(\Grtil_{d,n},\E_{d,n};\pi_d(\Grtil_{d,n},\E_{d,n}))\cong \Hom(H_d(\Grtil_{d,n},\E_{d,n}),\ZZ^n)\cong \oplus_{i=1}^n\Hom(H_d(\Grtil_{d,n},\E_{d,n}),\ZZ).\]

The projections $p_i$ are essentially just the components with respect to the splitting above, so $\bigcap_i \ker p_i=0$ follows immediately.  \end{proof}

\begin{proof}[Proof of Theorem \ref{thm:main_thm}]
    One direction of the proof is straightforward. Suppose that one is able to construct an equal norm Parseval frame field of size $n$ consisting of sections $\sigma_1,\ldots, \sigma_n$. Each $\sigma_i$ gives a section of $E$ that is nowhere zero. Thus the Euler class of $E$ must be trivial. 

     On the other hand, suppose that $e(E)=0$. 
    Let $o_{ENF}(E)$ be the obstruction to homotoping the classifying map for $E$ to have image in $\E_{d,n}$. By Lemma $\ref{lem:relation_to_Euler}$, we have that $p_i(o_{ENF}(E))=e(E)=0$. Thus, for each $i$, $o_{ENF}(E)\in \ker p_i$. Again applying Lemma \ref{lem:relation_to_Euler} shows that $o_{ENF}(E)=0$. Hence it is possible to deform the classifying map into $\E_{d,n}$, and hence we can find an equal norm Parseval frame of size $n$.
\end{proof}

\section{Complementary bundles and the critical dimensions}\label{sec:complementary_bundles}

In this section we discuss the sharpness of Theorem \ref{thm:main_thm} and Theorem \ref{thm:parseval_frames_exist}. A primary theme of this section is that a Parseval frame provides an isometric embedding into the trivial bundle and that characteristic classes of the orthogonal complement bundle provide obstructions to constructing equal norm Parseval frames. 

\subsection{Parseval frame fields}
We begin by letting $E\to M$ be a rank-$d$ oriented vector bundle with fiberwise inner products over a $k$-dimensional manifold. By Theorem \ref{thm:parseval_frames_exist} there is a Parseval frame field $\{\sigma_1,\ldots, \sigma_n\}$, where $n=k+d$. Using analysis operators, we get an embedding $E \to M\times \RR^n$ which is a bundle map and an isometric embedding on each fiber. Next, let $E^\perp$ be the orthogonal bundle, which is a rank-$k$ vector bundle over $M$. That is, the fiber $E^\perp_p$ is the orthogonal complement of $E_p$ in $\RR^n$, under the above bundle map embedding. 

There is a more universal description of $E^\perp$ that makes its dependence on the choice of frame more precise. For each $\tau_{d,n}\to \Grtil_{d,n}$ there is a corresponding rank-$(n-d)$ bundle $\tau_{d,n}^\perp \to \Grtil_{d,n}$. These two bundles sum to give the trivial $\RR^n$ bundle over $\Grtil_{d,n}$. Each frame of size $n$ gives a classifying map $f:M\to \Grtil_{d,n}$ and $E^\perp=f^\ast(\tau_{d,n}^\perp)$.

\begin{remark}\label{rem:perp_depends_on_frame}
It is important to note that $E^\perp$ is \emph{not} an invariant of $E$, rather it  depends on the frame giving rise to the isometric embedding into $M\times \RR^n$. In general, it is possible that different Parseval frames will give rise to non-isomorphic complements with different Euler classes. 

\end{remark}

The first problem we will address is the problem of \emph{shortening} a Parseval frame field. Given a Parseval frame field $\mathcal{C}=\{\sigma_1,\ldots, \sigma_n\}$ on $E\to M$ a \emph{shortening of $\mathcal{C}$} is a family $\mathcal{C}_t=\{\sigma_1^t,\ldots, \sigma_n^t\}$ of Parseval frame fields so that $\mathcal{C}_0=\mathcal{C}$ and $\sigma^1_n$ is the zero section. If we remove this zero section from $\mathcal{C}_1$ then we obtain a Parseval frame field of size $n-1$.

Shortening also admits a convenient description at the level of classifying maps.

\begin{proposition}\label{prop:shortenable_classifying_map}
    A Parseval frame field of size $n$ on $E\to M$ is shortenable if and only if the corresponding classifying map $f:M\to \Grtil_{d,n}$ can be homotoped to have image in $\Grtil_{d,n-1}$.
\end{proposition}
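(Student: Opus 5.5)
The plan is to prove both directions by passing between Parseval frame fields and their analysis operators, as in Lemma \ref{lem:classifying_map}. Throughout, I identify $\Grtil_{d,n-1}\subset\Grtil_{d,n}$ with the oriented $d$-planes contained in $e_n^\perp=\RR^{n-1}\times\{0\}$. The key observation is pointwise. If $\F=\{v_1,\ldots,v_n\}$ is a Parseval frame for $E_p$ with analysis operator $A$, then the $n$th coordinate of $A(v)$ is $\langle v,v_n\rangle$. Hence $v_n=0$ if and only if $\operatorname{Im}(A)\subset e_n^\perp$. Equivalently, the canonical section $s_n(P)=P_Pe_n$ vanishes exactly on planes orthogonal to $e_n$.

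\emph{Forward direction.} Suppose $\mathcal C_t$ is a shortening of $\mathcal C$. For each $(p,t)$ the analysis operator $A^t_p$ of $\mathcal C_t$ at $p$ is an isometric embedding $E_p\to\RR^n$, and it depends continuously on $(p,t)$. Its oriented image therefore defines a continuous map $H:M\times[0,1]\to\Grtil_{d,n}$, exactly as in the proof of Lemma \ref{lem:classifying_map}. By construction $H_0=f_{\mathcal C}=f$. Since $\sigma_n^1\equiv 0$, the observation above places every image of $H_1$ in $e_n^\perp$. So $H$ is a homotopy, within $\Grtil_{d,n}$, from $f$ to a map into $\Grtil_{d,n-1}$.

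\emph{Converse direction.} Let $H:M\times[0,1]\to\Grtil_{d,n}$ be a homotopy with $H_0=f=f_{\mathcal C}$ and $H_1(M)\subset\Grtil_{d,n-1}$.
\begin{enumerate}
\item Build a continuous family of fiberwise isomorphisms $T_t:E\to H_t^\ast\tau_{d,n}$ with $T_0=A$, where $A$ is the analysis operator of $\mathcal C$. This is the step I expect to be the main obstacle. I would use the standard fact (cf.\ \cite{Husemoller1994}) that a bundle $\xi$ over $M\times[0,1]$ is isomorphic to $\pi^\ast(\xi|_{M\times\{0\}})$ through an isomorphism restricting to the identity over $M\times\{0\}$. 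Applied to $\xi=H^\ast\tau_{d,n}$, together with $H_0^\ast\tau_{d,n}\cong E$ via $A$, this gives the family $T_t$. Orientations are preserved by continuity in $t$.
\item Replace $T_t$ by its polar part $U_t=T_t(T_t^\ast T_t)^{-1/2}$. This is still continuous in $t$, is a fiberwise orientation-preserving isometry, and satisfies $U_0=A$ because $A$ is already isometric.
\item Set $\sigma_i^t=U_t^{-1}(H_t^\ast s_i)$. Each $\mathcal C_t=\{\sigma^t_1,\ldots,\sigma^t_n\}$ is a Parseval frame field by the first bullet of Lemma \ref{lem:classifying_map}, and the family is continuous in $t$.
\end{enumerate}

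It remains to check the endpoints.
\begin{itemize}
\item At $t=0$, $U_0=A$ and $A^\ast$ is left inverse to $A$ on the image plane. This gives, for all $v$,
\[
\langle A^{-1}P_{f(p)}e_i,v\rangle=\langle P_{f(p)}e_i,Av\rangle=\langle e_i,Av\rangle=\langle \sigma_i(p),v\rangle.
\]
Hence $\sigma_i^0=\sigma_i$ and $\mathcal C_0=\mathcal C$.
\item At $t=1$, every $H_1(p)$ lies in $e_n^\perp$, so $s_n(H_1(p))=0$ and $\sigma_n^1\equiv0$.
\end{itemize}
Thus $\mathcal C_t$ is a shortening of $\mathcal C$.
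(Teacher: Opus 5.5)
Your proposal is correct and follows essentially the same route as the paper. In the forward direction you read off the homotopy from the analysis operators of the shortening. In the converse you use homotopy invariance of bundles over $M\times[0,1]$, normalized to agree with the original analysis operator at $t=0$, to get a continuous family of isometric analysis operators whose images at $t=1$ lie in $e_n^\perp$. Your polar-decomposition step and your explicit check that $\mathcal C_0=\mathcal C$ simply spell out details the paper leaves implicit in its appeal to homotopy invariance for Euclidean bundles.
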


\begin{proof}

If a Parseval frame contains the zero vector as its last vector then the corresponding analysis operator automatically has image in the coordinate hyperplane $\RR^{n-1}\hookrightarrow \RR^{n}$. Thus for a shortenable Parseval frame field its corresponding classifying map can be deformed to have image in $\Grtil_{d,n-1}\subset \Grtil_{d,n}$. Conversely, suppose there is a homotopy $f_t$ of our original classifying map so that the image of $f_1$ lies in $\Grtil_{d,n-1}$. We can use this map to pull back $\tau_{d,n}$ to give a rank-$d$ bundle $\mathcal{E}$ over $M\times I$. By homotopy invariance for Euclidean vector bundles we have that $pr_M^\ast E\cong \mathcal{E}$. More precisely, we have a family of isometries $\phi_{(p,t)}:E_p\to \mathcal{E}_{(p,t)}\subset \RR^n$. These isomorphisms can be chosen so that $\phi_{(p,0)}=Id$. For each $p\in M$ we can regard $\phi_{(p,t)}$ as an analysis operator. These analysis operators are all isometric embeddings, and so this family corresponds to a family $\mathcal{C}_t$ of Parseval frame fields. The image of the analysis operators for $\mathcal{C}_1$ are all contained in $\RR^{n-1}\hookrightarrow \RR^n$, which means that the last section $\sigma_n^1\in \mathcal{C}_1$ has the property that $\sigma^1_n(p)$ is orthogonal to each vector in $E_p$. Thus $\sigma^1_n$ is the zero section and so the original frame is shortenable. 
\end{proof}

We now analyze when a Parseval frame field can be shortened. The following proposition describes how the Euler class of $E^\perp$ is the only obstruction to finding a shortening.

\begin{proposition}\label{prop:euler_obstruction_for_parseval_frames}
Let $E$ be as above with $d>2$. A Parseval frame field of size $d+k$ can be shortened if and only if the corresponding complement satisfies $e(E^\perp)=0$.

\end{proposition}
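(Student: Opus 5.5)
By Proposition \ref{prop:shortenable_classifying_map}, shortening the frame is the same as homotoping the classifying map $f:M\to\Grtil_{d,n}$, $n=d+k$, into $\Grtil_{d,n-1}$. I will identify the obstruction to this deformation with the Euler class of $f^\ast\tau^\perp_{d,n}=E^\perp$.

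The key observation is that $\Grtil_{d,n-1}\subset\Grtil_{d,n}$ is exactly the set of $d$-planes orthogonal to $e_n$, i.e. the planes $P$ for which $e_n\in P^\perp$. Consider the section $s_n$ of $\tau^\perp_{d,n}$ given by orthogonally projecting $e_n$ onto $P^\perp$. Its zero set is $\{P: e_n\in P\}$, which is the locus $N_n$ considered earlier and has codimension $n-d=k$. Its complement retracts onto $\Grtil_{d,n-1}$. Concretely, if $P\not\ni e_n$, the plane $P$ together with the line $\RR e_n$ spans a $(d+1)$-space $Q$, and the $d$-plane $Q\cap e_n^\perp$ carries the induced orientation. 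Rotating $P$ inside $Q$ onto $Q\cap e_n^\perp$, along the unique shortest path that avoids planes containing $e_n$, gives a strong deformation retraction of $\Grtil_{d,n}\minus N_n$ onto $\Grtil_{d,n-1}$. (Equivalently, flow along $P\mapsto$ the plane spanned by the projections orthogonal to $e_n$, interpolated linearly.) So the problem becomes: can $f$ be homotoped off $N_n$?

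Step 1: the pair $(\Grtil_{d,n},\Grtil_{d,n}\minus N_n)$. Since $N_n$ is a closed submanifold of codimension $k$ whose normal bundle is $\tau^\perp|_{N_n}$ (as $s_n$ is transverse), the tubular neighbourhood theorem and excision show the pair is $(k-1)$-connected with $\pi_k\cong\ZZ$, generated by a meridional disk. This needs $N_n$ connected and the complement simply connected, which hold since $N_n\cong\Grtil_{d-1,n-1}$ and $d>2$. If $k=1$ the connectivity is handled directly.

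Step 2: obstruction theory. Since $\dim M=k$, the primary obstruction in $H^k(M;\ZZ)$ is the only obstruction; there are no higher obstructions because $M$ has no cells above dimension $k$. Exactly as in Lemma \ref{lem:relation_to_Euler}, Proposition \ref{prop:poincare_dual_euler} identifies the universal obstruction with $s_n^\ast(u_{\tau^\perp})$ in $H^k(\Grtil_{d,n},\Grtil_{d,n}\minus N_n)$. Pulling back by $f$ then gives $e(f^\ast\tau^\perp)=e(E^\perp)$.

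This yields both directions. If $e(E^\perp)=0$, the obstruction vanishes, $f$ deforms into the complement of $N_n$, and then into $\Grtil_{d,n-1}$. Conversely, a shortening gives $f_1$ with $e_n\in f_1(p)^\perp$ for every $p$, so $s_n\circ f_1$ is a nowhere-zero section of $f_1^\ast\tau^\perp\cong E^\perp$. Hence $e(E^\perp)=0$. A more direct version of this converse: the last frame vector $\sigma_n$ is a section of $E$, and the complement of $E$ contains the projection of $e_n$, which is nonzero.

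The main obstacle is Step 2, specifically checking that the meridian pairs to $\pm1$ with consistent orientations and that homotopy-level (rather than homology-level) obstruction theory is legitimate for this pair. The pair is $(k-1)$-connected with the subspace simply connected, so relative Hurewicz applies as in Section \ref{sec:topology_of_pair}. I would also double-check the retraction construction near planes close to $N_n$.
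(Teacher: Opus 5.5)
Your proof is correct. The necessity direction is the same as the paper's: after shortening, $e_{d+k}$ is orthogonal to every $f_1(p)$, so it gives a nowhere-zero section of $f_1^\ast\tau^\perp_{d,d+k}\cong E^\perp$. For sufficiency, however, you take a genuinely different route.

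\textbf{The paper's argument.} The paper never analyzes the pair $(\Grtil_{d,n},\Grtil_{d,n}\minus N_n)$. It proceeds as follows.
\begin{enumerate}
\item Because $e(E^\perp)=0$, and the Euler class is the complete obstruction to a nonvanishing section of a rank-$k$ bundle over a $k$-manifold, there is a unit nowhere-zero section $\tau$ of $E^\perp$.
\item It views $\tau$ as a map $M\to S^{d+k-1}$. This map is null-homotopic because $S^{d+k-1}$ is $(d+k-2)$-connected and $k\le d+k-2$.
\item It uses the null-homotopy to rotate the hyperplane field $\langle\tau\rangle^\perp\supset E$ onto the constant hyperplane $e_{d+k}^\perp$. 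The bundle $E$ is carried along by homotopy invariance of bundles over $M\times I$.
\end{enumerate}
This argument is short and elementary. It needs no transversality, no relative Hurewicz, no replacement of the open set $\Grtil_{d,n}\minus N_n$ by a CW subcomplex, and no separate treatment of $k=1$. It also makes transparent where the hypothesis on $d$ enters: only in the null-homotopy of $M\to S^{d+k-1}$.

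\textbf{Your argument.} You recast shortening as compressing $f$ into $\Grtil_{d,n}\minus N_n\simeq\Grtil_{d,n-1}$. Your retraction is valid: for $t<1$ the map $I-te_ne_n^{T}$ is invertible, and $(I-te_ne_n^{T})P$ contains $e_n$ only if $P$ does. At $t=1$ it is the orthogonal projection to $e_n^\perp$, which is injective on any $P\not\ni e_n$. You then compute $\pi_k\cong\ZZ$ via a meridian of $N_n$. Finally, you identify the universal obstruction with $s_n^\ast(u_{\tau^\perp})$, so the obstruction for $f$ is $\pm f^\ast e(\tau^\perp_{d,n})=\pm e(E^\perp)$. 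Your worries about signs are immaterial, since only vanishing matters.

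\textbf{What each route buys.} Your route costs more machinery, but it gives a universal statement in the same language as Lemma~\ref{lem:relation_to_Euler} and Lemma~\ref{lem:relation_to_dual_euler}. It makes explicit that $N_n$ plays the same role for shortening that the $N_i$ play for the equal-norm problem. In effect, you re-derive inside the Grassmannian the classical fact the paper simply quotes in its first step: that the Euler class is the complete obstruction to a nonvanishing section.
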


\begin{proof}
For the forward direction, let $f:M\to \Grtil_{d,d+k}$ be the classifying map associated to the Parseval frame. Since the Parseval frame can be shortened, there is a homotopy $f_t:M\to \Grtil_{d,d+k}$ so that the image of $f_1$ lies in $\Grtil_{d,d+k-1}$. Let $E_1^\perp$ be the complement associated to $f_1$, then by homotopy invariance we see that $E^\perp \cong E^\perp_1$. By construction, the last coordinate vector $e_{d+k}$ of $\RR^{d+k}$ is orthogonal to every $f_1(p)$, for $p\in M$, and so the constant vector field $e_{d+k}$ pulls back to give a nowhere zero vector field on $E_1^\perp$, and so $0=e(E_1^\perp)=e(E^\perp)$. 

Here the endpoint $f_1$ is regarded as a map into $\Grtil_{d,d+k}$ via the inclusion $\Grtil_{d,d+k-1}\hookrightarrow \Grtil_{d,d+k}$.

Conversely, suppose we can find a Parseval frame field on $E$ of size $d+k$ such that the corresponding bundle $E^\perp$ has $e(E^\perp)=0$. The triviality of the Euler class means we can find a nowhere zero vector field $\tau:M\to E^\perp$ on $E^\perp$. Without loss of generality we can assume that $\tau$ has been normalized to have unit length. $E^\perp$ is embedded in $M\times \RR^{d+k}$ and so we can regard $\tau$ as a map from $M$ to $S^{d+k-1}$. Let $F=\langle \tau \rangle^\perp$. Since $\tau(p)\in E_p^\perp$ it follows that $E_p\subset F_p\subset \epsilon^{d+k}$.  Our next goal is to deform $F$ to the coordinate subbundle $\epsilon^{d+k-1}$ inside $\epsilon^{d+k}$. 

 The sphere $S^{d+k-1}$ is $(d+k-2)$-connected and $d+k-2>k=\dim(M)$ by our hypothesis on $d$. Thus we can apply cellular approximation to conclude that $\tau$ is homotopic to the trivial map with image $e_{d+k}$. Let $\tau_t$ denote the corresponding homotopy. This gives a rank-$(d+k-1)$ bundle $\mathcal{F}$ over $M\times I$ whose fiber at $(p,t)$ is $\mathcal{F}_{(p,t)}=\langle \tau_t(p)\rangle^\perp$. By homotopy invariance of vector bundles it follows that $\mathcal{F}\cong pr_M^\ast F$, where $pr_M:M\times I\to M$ is projection. More precisely, we have a family of isomorphisms $\phi_{(p,t)}:F_p\to \mathcal{F}_{(p,t)} $ such that $\phi_{(p,0)}=Id$.

 Finally, we can homotope our original classifying map $f:M\to \Grtil_{d,n}$ via $f_t(p)=\phi_{(p,t)}(f(p))$, where orientation on $f_t(p)$ is chosen so that $\phi_{(p,t)}$ is orientation preserving. This gives the desired shortening. 
\end{proof}

Ultimately, we would like to construct an invariant \emph{of $E$} that tells us precisely when we can find a Parseval frame field on $E$ of size $d+k-1$. Proposition \ref{prop:euler_obstruction_for_parseval_frames} is not quite good enough because it only provides the obstruction to shortening \emph{a particular} Parseval frame field of size $d+k$.

Even though the isomorphism class of $E^\perp$ is not an invariant of $E$, it is still possible to extract an honest invariant of $E$.  

\begin{lemma}\label{lem:sw_class_is_invariant}
Let $E\to M$ be an oriented rank-$d$ vector bundle and let $E^\perp$ be the complement of $E$ coming from a particular choice of Parseval frame. The Stiefel–Whitney class $w(E^\perp)\in H^\ast(M,\ZZ_2)$ is independent of the choice of Parseval frame. More precisely, $w(E^\perp)=w(E)^{-1}$ as an element of the cohomology ring. In particular, $w(E^\perp)$ is an invariant of the oriented vector bundle $E$.
\end{lemma}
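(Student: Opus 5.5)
The plan is to use the Whitney product formula together with the triviality of the ambient bundle. A Parseval frame field of size $n$ gives an isometric bundle embedding $E\hookrightarrow M\times\RR^n$ through the analysis operators, as in Lemma~\ref{lem:classifying_map}. Taking fiberwise orthogonal complements produces a splitting
\[
E\oplus E^\perp\cong \epsilon^n .
\]
Equivalently, if $f:M\to\Grtil_{d,n}$ is the classifying map, then $E\cong f^\ast\tau_{d,n}$ and $E^\perp=f^\ast\tau_{d,n}^\perp$, and $\tau_{d,n}\oplus\tau_{d,n}^\perp$ is the trivial $\RR^n$ bundle over $\Grtil_{d,n}$. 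Pulling this back by $f$ gives the same splitting.

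Next, I would apply the Whitney sum formula for Stiefel–Whitney classes:
\[
w(E)\,w(E^\perp)=w(\epsilon^n)=1\in H^\ast(M;\ZZ_2),
\]
where $w(\epsilon^n)=1$ because trivial bundles have trivial Stiefel–Whitney classes, by naturality under pullback from a point.

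To conclude, I would observe that $w(E)=1+w_1(E)+\cdots$ has degree-zero component $1$, and every element of this form is a unit in $H^\ast(M;\ZZ_2)$. Its inverse $\wbar(E)$ can be computed degree by degree via the recursion
\[
\wbar_k=\sum_{i=1}^{k} w_i\,\wbar_{k-i}.
\]
This recursion terminates because $H^j(M;\ZZ_2)=0$ for $j>\dim M$. Since inverses in a commutative ring with unit are unique, $w(E^\perp)=w(E)^{-1}$.

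The right-hand side depends only on the isomorphism class of $E$ and not on the frame. Hence $w(E^\perp)$ is the same for every Parseval frame field, of any size $n$. Note that the rank of $E^\perp$ changes with $n$ but its total Stiefel–Whitney class does not.

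I do not expect any real obstacle here. The only point needing care is making sure that the complement is an honest continuous subbundle of the trivial bundle, so that the Whitney formula applies. The pullback description $E^\perp=f^\ast\tau^\perp_{d,n}$ handles this cleanly, since $\tau_{d,n}^\perp$ is a genuine vector bundle over $\Grtil_{d,n}$.
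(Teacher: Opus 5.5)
Your proposal is correct and follows essentially the paper's argument: the splitting $E\oplus E^\perp\cong\epsilon^n$ and the Whitney product formula give $w(E)\,w(E^\perp)=1$, and uniqueness of inverses finishes the proof. Your remarks that $w(E)$ is a unit and that $E^\perp=f^\ast\tau_{d,n}^\perp$ is an honest bundle only make explicit what the paper leaves implicit.
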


\begin{proof}

If $E^\perp$ comes from a Parseval frame of size $n$, then by construction $E\oplus E^\perp \cong \epsilon^n$. In other words, $E^\perp$ is a representative of the stable inverse of the vector bundle $E$.  Stiefel–Whitney classes behave multiplicatively under direct sums and so $w(E)w(E^\perp)=1$, and the result follows.
\end{proof}

Henceforth, we adopt the notation that $\wbar(E)=w(E)^{-1}$, and $\wbar_k(E)$ is the $k$th graded piece of $\wbar(E)$. From the discussion above, we see that the problem of finding a small Parseval frame can therefore be viewed as the problem of finding a low-rank representative of the stable inverse of $E$. Questions of this type have been studied classically. In particular, Malyi \cite{Malyi1974} studied the possible Euler classes of fixed-rank bundles within a prescribed stable equivalence class.

On the other hand, we have seen in our previous discussion that $w(E^\perp)$ and hence $w_k(E^\perp)$ is an honest invariant of $E$. Furthermore, since $w_k(E^\perp)$ is the mod 2 reduction of $e(E^\perp)$, it must vanish if $E\to M$ admits a Parseval frame of size $d+k-1$. We can now prove the following result that says $w_k(E^\perp)$ is the only obstruction to the existence of a Parseval frame of size $d+k-1$. 

\begin{theorem}\label{thm:d+k-1_obstruction}
    Let $E\to M$ be an orientable rank-$d$ vector bundle, with $d > 2$, over a closed, connected, orientable $k$-manifold. Then $E$ admits a Parseval frame field of size $n = d+k-1$ if and only if $\wbar_k(E)=0$.
\end{theorem}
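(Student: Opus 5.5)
The plan is to prove the two directions separately, reducing everything to Proposition \ref{prop:euler_obstruction_for_parseval_frames} together with a surgery argument on the complementary bundle.

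\emph{Necessity.} Suppose $E$ admits a Parseval frame field of size $d+k-1$. Append the zero section to obtain a Parseval frame field of size $d+k$. Its classifying map factors through $\Grtil_{d,d+k-1}$, so its complement $E^\perp$ contains the constant nowhere-zero section $e_{d+k}$. Hence $e(E^\perp)=0$. By Lemma \ref{lem:sw_class_is_invariant}, $\wbar_k(E)=w_k(E^\perp)$. Since $E^\perp$ has rank $k$, $w_k(E^\perp)$ is the mod $2$ reduction of $e(E^\perp)$, so it vanishes.

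\emph{Sufficiency.} Assume $\wbar_k(E)=0$. By Theorem \ref{thm:parseval_frames_exist}, choose a Parseval frame field of size $d+k$, with classifying map $f$ and complement $E^\perp=f^\ast\tau^\perp_{d,d+k}$. Since $H^k(M;\ZZ)\cong\ZZ$, the hypothesis says $e(E^\perp)=2m[M]^\ast$ for some integer $m$. By Proposition \ref{prop:euler_obstruction_for_parseval_frames}, it suffices to modify $f$, keeping its image in $\Grtil_{d,d+k}$, so that the new complement has Euler class $0$. The key step is to show that $f$ can be changed on a small $k$-disk $D\subset M$ so that $e(E^\perp)$ changes by exactly $\pm 2$, while the pulled-back $E$ is unchanged up to isomorphism. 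Iterating this $\abs{m}$ times then finishes the proof.

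To carry this out, work with homotopy classes of maps $(D,\partial D)\to$ the relevant spaces. First homotope $f$ to be constant, equal to some $P_0$, on a neighborhood of $D$. Replace $f|_D$ by the ``sum'' of $f|_D$ with a map $g:S^k\to\Grtil_{d,d+k}$ based at $P_0$. The pullback of $E$ changes by the clutching class $g^\ast\tau_{d,d+k}$, and the complement changes by $g^\ast\tau^\perp_{d,d+k}$. The Euler class changes additively: $e(E^\perp_{\text{new}})=e(E^\perp)+\langle e(\tau^\perp),g_\ast[S^k]\rangle$. So we need a class $g\in\pi_k(\Grtil_{d,d+k})$ with two properties: $g^\ast\tau_{d,d+k}$ is trivial, so that $E$ is unchanged, and $\langle e(\tau^\perp),g\rangle=\pm2$.

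To find such a $g$, consider the fibration $\SO(d+k)\to\Grtil_{d,d+k}$ with fiber $\SO(d)\times\SO(k)$. The homotopy class of $g^\ast\tau_{d,d+k}$ corresponds to the image of $\partial g\in\pi_{k-1}(\SO(d)\times\SO(k))$ in $\pi_{k-1}(\SO(d))$, while the complement corresponds to the image in $\pi_{k-1}(\SO(k))$. So we need an element $(0,\beta)$ of $\pi_{k-1}(\SO(d))\times\pi_{k-1}(\SO(k))$ whose image in $\pi_{k-1}(\SO(d+k))$ is zero, and whose associated bundle over $S^k$ has Euler number $\pm2$. The natural candidate is $\beta=$ the clutching map of $TS^k$, which has Euler number $2$ when $k$ is even. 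Its image in $\pi_{k-1}(\SO(k+1))$ is already trivial, since $TS^k\oplus\epsilon$ is trivial, and hence its image in $\pi_{k-1}(\SO(d+k))$ is trivial as well. Exactness then gives $g\in\pi_k(\Grtil_{d,d+k})$ with $\partial g=(0,\beta)$.

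When $k$ is odd, $e(E^\perp)$ is $2$-torsion in $\ZZ$, hence already zero, and there is nothing to do. (For $k$ odd, also $\wbar_k$ is the reduction of $0$.)

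I expect the main obstacle to be the middle step. It has two parts. The first is making rigorous that the disk-modification changes the pulled-back $E$ by exactly the clutching class of $g^\ast\tau$, so that the new map still classifies $E$ and isometric identifications can be restored via the polar-decomposition trick. The second is the sign and normalization bookkeeping needed to ensure the change is $\pm2$ with a controllable sign. Both signs are realizable by precomposing $g$ with a reflection of $S^k$, so the sign is not a real problem. If the fibration bookkeeping proves awkward, the fallback is to phrase the argument in terms of $E^\perp$ directly. In that version, one performs a connected-sum-type surgery of $E^\perp$ with $TS^k$ along a disk, preserving the stable class of $E^\perp$ and hence $E\oplus E^\perp_{\text{new}}\cong\epsilon^{d+k}$. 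One then checks that the resulting embedding can be chosen to keep $E$ fixed, using that $d>2$ places us in the stable range for $\pi_{k-1}(\SO(d))\to\pi_{k-1}(\SO(d+k))$ on the relevant classes.
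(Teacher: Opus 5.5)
Your proposal is correct and follows essentially the same route as the paper. Necessity comes from $\wbar_k(E)=w_k(E^\perp)$ being the mod $2$ reduction of $e(E^\perp)$. Sufficiency comes from surgering the classifying map on a disk with a class in $\pi_k(\Grtil_{d,d+k})$ whose $\tau_{d,d+k}$-pullback is trivial and whose $\tau_{d,d+k}^\perp$-pullback is $TS^k$, which shifts the Euler number by $\pm2$ until Proposition~\ref{prop:euler_obstruction_for_parseval_frames} applies. The only difference is how you obtain that class: the paper writes it down explicitly as $\sigma(x)=\RR^{d-1}\oplus\langle x\rangle$, which makes both bundle identifications immediate, whereas you get it abstractly from the exact sequence of $\SO(d)\times\SO(k)\to\SO(d+k)\to\Grtil_{d,d+k}$ (the paper's $\sigma$ is a representative of your $g$).
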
 

The proof will use the following construction for locally modifying a map, which is a particular instance of the construction described in~\cite[Theorem III.6.20]{whitehead2012elements}. Let $f:M\to N$ be a map into some manifold $N$, and let $[\nu] \in \pi_k(N)$ be a homotopy class represented by a map $\nu:S^k \to N$. For a closed disk $\Delta^k \subset M$, there exists a map $g:\Delta^k \to N$ such that $g|_{\partial \Delta^k} = f|_{\partial \Delta^k}$ and the map $S^k \to N$ obtained by gluing $f|_{\Delta^k}$ and $g$ along their common boundary $S^{k-1} \approx \partial \Delta^k$ is homotopic to $\nu$. We define the \emph{surgery of $f$} to be the map $f_\nu:M \to N$ defined by 
\[
f_\nu(m) \coloneqq \left\{\begin{array}{rl}
g(m) & \mbox{if } m \in \Delta^k \\
f(m) & \mbox{if } m \in M \setminus \Delta^k.
\end{array}\right.
\]
It will be convenient to use a different expression for this map, up to homotopy. Given $\Delta^k \subset M$, define the \emph{pinch map}
\[
p:M \to M \vee S^k
\]
to be the map obtained by collapsing the boundary $S^{k-1}  = \partial \Delta^k$ to a chosen basepoint~(cf.~\cite[Section 9.5]{May1999}). Then 
\[
f_\nu \simeq (f\vee \nu)\circ p.
\]
The proof of Theorem \ref{thm:d+k-1_obstruction} will use the following technical lemma about this construction.

\begin{lemma}
\label{lem:change_of_euler}
Let $M$ be a closed, connected, oriented $k$-manifold, let $N$ be a connected manifold, and let $\xi \to N$ be an oriented real vector bundle of rank-$k$. Let $f:M\to N$ be a map and let $[\nu]\in \pi_k(N)$. Suppose that $f_\nu:M\to N$ is obtained from $f$ by surgery along an embedded $k$-disk $\Delta^k$ in $M$. Then
\[
\left\langle e(f_\nu^*\xi),[M]\right\rangle
=
\left\langle e(f^*\xi),[M]\right\rangle
\pm
\left\langle e(\nu^*\xi),[S^k]\right\rangle.
\]
The sign depends on the orientation convention used in defining the surgery.
\end{lemma}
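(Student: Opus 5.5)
We will use the homotopy description $f_\nu\simeq (f\vee\nu)\circ p$, where $p:M\to M\vee S^k$ is the pinch map along $\partial\Delta^k$. Because Euler classes are natural and homotopy invariant, it suffices to compute $e\bigl(((f\vee\nu)\circ p)^*\xi\bigr)=p^*(f\vee\nu)^*e(\xi)$ and evaluate it on $[M]$. So the computation has two steps. First we identify $(f\vee\nu)^*e(\xi)$ in $H^k(M\vee S^k;\ZZ)$. Then we identify $p_*[M]$ in $H_k(M\vee S^k;\ZZ)$.

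For the first step, use that for $k\ge 1$ the wedge inclusions induce an isomorphism
\[
H^k(M\vee S^k;\ZZ)\cong H^k(M;\ZZ)\oplus H^k(S^k;\ZZ).
\]
Under this splitting a class is determined by its restrictions to the two wedge summands. Naturality of the Euler class gives that the restriction of $(f\vee\nu)^*e(\xi)$ to $M$ is $f^*e(\xi)=e(f^*\xi)$. Its restriction to $S^k$ is $\nu^*e(\xi)=e(\nu^*\xi)$.

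For the second step, we claim that $p_*[M]=[M]\pm[S^k]$ in
\[
H_k(M\vee S^k;\ZZ)\cong H_k(M;\ZZ)\oplus H_k(S^k;\ZZ).
\]
To check this, compose $p$ with the collapse $M\vee S^k\to S^k$ that sends $M$ to the basepoint. The composite $M\to S^k$ collapses the complement of the open disk $\Delta^k$ to a point. It is therefore a degree $\pm1$ map, with sign given by whether the identification $\Delta^k/\partial\Delta^k\cong S^k$ preserves orientation. Next compose $p$ with the retraction $M\vee S^k\to M$ that sends $S^k$ to the basepoint. The composite $M\to M$ collapses $\Delta^k$ to a point, so it is homotopic to the identity and has degree $1$. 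Since the two collapse maps together detect the splitting of $H_k(M\vee S^k;\ZZ)$, the claim follows.

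Pairing the cohomology class from the first step with the homology class from the second gives
\[
\langle e(f_\nu^*\xi),[M]\rangle=\langle e(f^*\xi),[M]\rangle\pm\langle e(\nu^*\xi),[S^k]\rangle,
\]
as stated. The sign is exactly the orientation convention chosen for $\Delta^k/\partial\Delta^k\cong S^k$ in the surgery, which is why the statement leaves it unspecified.

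The only slightly delicate point is the claim $f_\nu\simeq (f\vee\nu)\circ p$ itself. The paper asserts it, and it follows from the construction of $g$: gluing $f|_{\Delta^k}$ and $g$ gives a map homotopic to $\nu$. Concretely, we may homotope $f$ so that it is constant on a small subdisk of $\Delta^k$ and perform the surgery there. We do need to keep track of the basepoint. Because $N$ is connected, the free homotopy class of $\nu$ suffices up to the sign and conjugation ambiguities, and these do not affect Euler numbers of oriented pullbacks.
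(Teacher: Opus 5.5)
Your proposal is correct and follows essentially the same route as the paper. Both arguments write $f_\nu\simeq(f\vee\nu)\circ p$, use $p_*[M]=[M]\pm[S^k]$ under the wedge splitting of $H_k(M\vee S^k;\ZZ)$, and evaluate $e(\xi)$ by naturality; the only differences are that you pull back the cohomology class rather than push forward the homology class, and that you justify $p_*[M]=[M]\pm[S^k]$ via the two collapse maps, which the paper simply asserts.
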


\begin{proof}
Recall that
\[
f_\nu \simeq (f\vee \nu)\circ p,
\]
where $p$ is the pinch map. Under the canonical identification
\[
H_k(M\vee S^k;\mathbb Z)
\cong
H_k(M;\mathbb Z)\oplus H_k(S^k;\mathbb Z),
\]
we have
\[
p_*[M]=[M]\pm[S^k],
\]
where the degree $\pm1$ of the second summand depends on the orientation convention. Using basic properties of Euler classes and wedge products, we have 
\begin{align*}
\left\langle e(f_\nu^*\xi),[M]\right\rangle
&=
\left\langle f_\nu^*e(\xi),[M]\right\rangle \\
&=
\left\langle e(\xi),(f_\nu)_*[M]\right\rangle \\
&=
\left\langle
e(\xi),
(f\vee \nu)_*\bigl([M]\pm[S^k]\bigr)
\right\rangle \\
&=
\left\langle e(\xi),f_*[M]\right\rangle
\pm
\left\langle e(\xi),\nu_*[S^k]\right\rangle \\
&=
\left\langle e(f^*\xi),[M]\right\rangle
\pm
\left\langle e(\nu^*\xi),[S^k]\right\rangle.
\end{align*}
\end{proof}

\begin{proof}[Proof of Theorem \ref{thm:d+k-1_obstruction}]
    From the previous discussion it is clear that $\wbar_k(E)=0$ is a necessary condition for the existence of such a frame field. Our goal is then to establish sufficiency.
    
    We first consider the case that $k$ is odd, in which case $\wbar_k(E)=0$ holds automatically. This case is essentially trivial: Lemma \ref{lem:euler_is_torsion} tells us that all oriented rank-$k$ bundles over $M$ have trivial Euler class, so for any particular Parseval frame field with classifying map $f:M\to \Gr_{d,d+k}$, we have $e(E^\perp)=0$, and Proposition \ref{prop:euler_obstruction_for_parseval_frames} shows that there is no obstruction to shortening any Parseval frame field of size $d+k$ to have size $d+k-1$. 
    
    The rest of the proof assumes that we are in the interesting case, where $k$ is even. The strategy is as follows. Choose an arbitrary Parseval frame of size $d+k$ and let $f:M\to \Grtil_{d,d+k}$ be the associated classifying map for $E$. To make explicit the dependence on $f$, we will denote the corresponding complement as $E_f^\perp$. By hypothesis, $\wbar_k(E)=0$. Since $\wbar_k(E)$ is the mod 2 reduction of $e(E^\perp_f)$, it follows that $\langle e(E_f^\perp), [M]\rangle$ is an even integer. Our goal is to define a new map $f_\sigma:M \to \Grtil_{d,d+k}$ satisfying
    \begin{equation}\label{eqn:goal_conditions}
        f_\sigma^\ast \tau_{d,d+k} \cong E \qquad \mbox{and} \qquad 
        \langle e(E_{f_\sigma}^\perp), [M] \rangle = \langle e(E_f^\perp), [M]\rangle \pm 2.
    \end{equation}
    Repeating the construction, this yields a classifying map for $E$ whose complementary bundle has trivial Euler class, so that Proposition \ref{prop:euler_obstruction_for_parseval_frames} can be applied to complete the proof. Our goal is then to define a classifying map $f_\sigma$ with the desired properties \eqref{eqn:goal_conditions}.

    Toward establishing the goal described above, we define a map $\sigma:S^k \to \Grtil_{d,d+k}$ as 
    \[
    \sigma(x) = \RR^{d-1} \oplus \langle x\rangle \subset \RR^{d-1} \oplus \RR^{k+1} \approx \RR^{d+k},
    \]
    where $(e_1,\ldots,e_{d-1},x)$ is declared to be a positively oriented basis. Before proceeding, let us record some facts about $\sigma$. First, $\sigma^\ast(\tau_{d,d+k})$ admits $d$ pointwise linearly independent sections: $x \mapsto e_i$ for $i=1,\ldots,d-1$ and $x \mapsto x$. 
    It follows that 
    \begin{equation}\label{eqn:sigma_properties_1}
        \sigma^\ast (\tau_{d,d+k}) \cong \epsilon^d, 
    \end{equation}
    Second, $\sigma(x)^\perp=\langle x\rangle ^\perp \cap \RR^{k+1}$, which is precisely the tangent plane to $S^k$ at $x$. It follows that $\sigma^\ast(\tau_{d,d+k}^\perp)\cong TS^k$, whence
    \begin{equation}\label{eqn:sigma_properties_2}
        \langle e(\sigma^\ast \tau^\perp_{d,d+k}), [S^k] \rangle = 2.
    \end{equation}
    Now we define $f_\sigma$ by surgery along a small disk $\Delta^k$, with respect to $[\sigma]$. 
    
    It remains to check that $f_\sigma$ satisfies \eqref{eqn:goal_conditions}. First, \eqref{eqn:sigma_properties_1} implies that 
    \[
    S^k \xrightarrow{\sigma} \Grtil_{d,d+k} \subset \Grtil_d
    \]
    is nullhomotopic as a map into $\Grtil_d$, so that $f_\sigma \simeq (f \vee \sigma) \circ p$ is homotopic to $f$ when both are considered as maps into $\Grtil_d$, hence $f$ and $f_\sigma$ classify the same bundle up to isomorphism, establishing the first condition in \eqref{eqn:goal_conditions}. Finally, the second condition of \eqref{eqn:goal_conditions} follows by Lemma \ref{lem:change_of_euler} and \eqref{eqn:sigma_properties_2}.
\end{proof}

\subsection{Equal-norm Parseval frame fields}

There is a similar obstruction theoretic picture for equal norm Parseval frames. Let $E\to M$ be an oriented rank-$d$ vector bundle over a $d$-manifold. Suppose $E$ admits an equal norm Parseval frame of size $2d$. Once again we get an isometric embedding $E\hookrightarrow M\times \RR^{2d}$ and can take the complement $E^\perp$, which is now another rank-$d$ vector bundle. Since $E$ admits an equal norm Parseval frame field and hence a nowhere zero section it follows that $e(E)=0$. It turns out that $E^\perp$ also has trivial Euler class. 

\begin{proposition}\label{prop:perp_has_trivial_euler_class}
    Let $n=2d$ or $2d+1$ and let $E$ be as above. Suppose that $E$ admits an equal norm Parseval frame field of size $n$. If $E^\perp$ is the corresponding complement, then $e(E^\perp)=0$. Furthermore, if $n=2d+1$ then $w_{d}(E^\perp)=0$.
\end{proposition}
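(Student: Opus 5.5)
We prove this by exhibiting a nowhere-zero section of $E^\perp$, or in the $n=2d+1$ case a section whose zero count is even.

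Let $\mathcal C=\{\sigma_1,\ldots,\sigma_n\}$ be the equal norm Parseval frame field, and let $f:M\to \E_{d,n}\subset\Grtil_{d,n}$ be its classifying map, so that $E^\perp=f^\ast\tau_{d,n}^\perp$. Consider the canonical sections $t_i$ of $\tau^\perp_{d,n}$, where $t_i(P)$ is the orthogonal projection of $e_i$ onto $P^\perp$. The zero set of $t_i$ is exactly $\{P\mid e_i\in P\}$. It therefore suffices to show that this set misses $\E_{d,n}$; the pulled-back section $f^\ast t_i$ is then nowhere zero, and $e(E^\perp)=0$.

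\textbf{Case $n=2d$.} The zero set of $t_i$ is $N_i=Z_{d-1,J_i}$, and the paper has already shown $N_i\subset Z$ with $\E_{d,n}\subset \Grtil_{d,n}\minus Z$. One can also see this directly. If $e_i\in F$, then $\|P_Fe_i\|^2=1$, whereas for $F\in\E_{d,n}$ every diagonal entry of $P_F$ equals $d/n<1$.

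\textbf{Case $n=2d+1$.} The same norm argument shows $e_i\notin F$ for every $F\in\E_{d,n}$ (this is the case $N_i\subset Z$), since $d/n<1$. So each $t_i$ is nowhere zero on $\E_{d,n}$ and $e(E^\perp)=0$. Here $E^\perp$ has rank $d+1$.

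\textbf{The class $w_d(E^\perp)$.} This is the main point, and the plan is to realize it by a pair of sections. Take $t_i$ and $t_j$ with $i\neq j$. They are pointwise linearly dependent exactly where $\mathrm{span}(e_i,e_j)$ meets $F$ nontrivially, that is, on $D_{ij}\cup N_i\cup N_j$.

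To make the intended statement precise, we would argue as follows. Because $t_i$ is nowhere zero on the image, the line field it spans gives a splitting $E^\perp\cong L\oplus Q$ with $L$ trivial, so $w_d(E^\perp)=w_d(Q)$. The section $t_j$, projected to $Q$, vanishes only where $F$ meets $\mathrm{span}(e_i,e_j)$.

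If that locus is avoided by $\E_{d,n}$, then $w_d(Q)$ vanishes. For this we need a Schur–Horn-type inequality. Apply the inequality from Section \ref{sec:GIT} with $I=\{1,\ldots,n\}\minus\{i,j\}$. If $v=ae_i+be_j\in F$ is nonzero, then $v$ is orthogonal to $P_Fe_k$ for all $k\in I$, so $\dim V_I\leq d-1$. The inequality then gives $(2d-1)d/(2d+1)\leq d-1$, i.e. $2d^2-d\le 2d^2-d-1$, which is false. Hence $\E_{d,2d+1}\cap D_{ij}=\emptyset$. This is precisely the statement that $D_{ij}\subset Z$.

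With this, $t_i$ and $t_j$ are pointwise independent on $\E_{d,2d+1}$. So $E^\perp$ contains a trivial rank-2 subbundle, its complement has rank $d-1$, and therefore $w_d(E^\perp)=0$ (and in fact $w_{d+1}(E^\perp)=0$ as well).

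This final step was expected to be the main obstacle, since it requires identifying which degeneracy loci are forced to avoid $\E_{d,n}$. The instability inequality handles it directly.
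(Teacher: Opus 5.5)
Your proof is correct, and its skeleton is the same as the paper's. Your $t_i$ are exactly the projected standard basis fields the paper uses, and nonvanishing comes from the same norm count ($\|P_Fe_i\|^2=d/n<1$, equivalently $|t_i|^2=1-d/n>0$). In both arguments, $w_d(E^\perp)=0$ follows by splitting off the trivial rank-$2$ subbundle spanned by two pointwise independent $t_i,t_j$.

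The two arguments differ only in how independence is proved. The paper stays on the complementary side. The $t_i$ form an equal-norm Parseval frame for $E^\perp$ with $|t_i|^2=\frac{d+1}{2d+1}>\frac12$. If $t_i=\pm t_j$ at some point, the Parseval identity evaluated at $u=t_i/|t_i|$ gives $1\geq 2\cdot\frac{d+1}{2d+1}>1$, a contradiction.

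You instead identify the dependency locus as $\{F : F\cap \mathrm{span}(e_i,e_j)\neq 0\}$. You then rule it out using the Schur--Horn inequality of Section~\ref{sec:GIT}, applied to the original frame with $I=\{i,j\}^c$.

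The paper's computation is that same inequality applied to the complementary frame with $I=\{i,j\}$ and $\ell=1$. Both conditions reduce to $F\cap\mathrm{span}(e_i,e_j)\neq 0$, so the two arguments are Naimark-dual forms of one estimate. The paper's version is self-contained and never leaves $E^\perp$. Yours makes visible that the obstruction is precisely the unstable strata $N_i$ and $D_{ij}$ lying in $Z$, which connects the proposition to the meridian analysis of Section~\ref{sec:topology_of_pair}.

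Two small remarks. Your opening plan about a section with an even number of zeros is never used and can be dropped. Also, for $n=2d+1$ the vanishing of $e(E^\perp)\in H^{d+1}(M;\ZZ)=0$ is automatic.
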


\begin{remark}
Notice that in this case $E^\perp$ is a rank-$(n-d)$ bundle and the Euler class lives in $H^{n-d}(M)$. If $n\geq 2d+1$ then $n-d>d$ and this group vanishes for dimensional reasons. Thus the interesting cases occur when $n\leq 2d$.
\end{remark}

\begin{proof}
    Orthogonally projecting the standard basis vector fields from $M\times \RR^{n}$ onto $E^\perp$ gives a Parseval frame field of size $n$. We claim that this frame field is also equal norm, in which case it follows that $e(E^\perp)=0$ since it admits a nowhere zero section. To prove the claim, we let $m\in M$ and $P_m$ be the orthogonal projection matrix from $\RR^{n}$ to the fiber $E_m$. Let $P^\perp_m$ be the corresponding projection matrix onto $E^\perp_m$. We then have the identity $P_m+P^\perp_m=I$. The length squared of the projection of the $i$th standard basis vector onto $E^\perp_m$ is the matrix entry $(P^\perp_m)_{ii}=1-(P_m)_{ii}=1-d/n=\frac{n-d}{n}$. Thus the corresponding frame field is in fact equal norm.  

In the $n=2d+1$ case the above construction provides an equal norm Parseval frame field of size $n$ on $E^\perp$. The equal norm condition gives  $\abs{\sigma_{i}(m)}^2=\frac{d+1}{2d+1}>\frac{1}{2}$ for each $m\in M$.  Let $\sigma_i$ and $\sigma_j$ be two of the sections coming from the equal norm Parseval frame. Suppose for contradiction that there was some $m\in M$, where $\sigma_i(m)$ and $\sigma_j(m)$ are linearly dependent. Since $\sigma_i(m)$ and $\sigma_j(m)$ have the same length we have $\sigma_i(m)=\pm \sigma_j(m)$. Next, let $u=\displaystyle \frac{\sigma_i(m)}{\abs{\sigma_i(m)}}$. Since $u$ is a unit vector and the frame field is Parseval we compute
\[1=\sum_{k=1}^{2d+1}\abs{\langle u,\sigma_k(m)\rangle}^2
    \geq \abs{\langle u,\sigma_i(m)\rangle}^2+\abs{\langle u,\sigma_j(m)\rangle}^2=\frac{d+1}{2d+1}+\frac{d+1}{2d+1}>1\]
This gives a contradiction. Hence $\sigma_i$ and $\sigma_j$ are pointwise linearly independent. These linearly independent sections allow us to split off a 2-dimensional trivial bundle and write $E^\perp\cong F\oplus \epsilon^2$. Since $E^\perp$ and $F$ are stably equivalent we have $w_d(E^\perp)=w_d(F)=0$ since $F$ has rank $d-1$.
\end{proof}

\subsection{\texorpdfstring{$n=2d$}{n=2d} case}

 We now prove Theorem~\ref{thm:critical_dimension_theorem}. Recall that in this case, $\pi_d(\Grtil_{d,2d},\E_{d,2d})\cong \ZZ^{2n}$ (see Prop \ref{prop:homotopy_of_stiefel_pair}), and thus has additional generators compared to the $n\geq 2d+2$ case. We have seen that these additional generators come from meridional disks that encircle the $N_i$ from Lemma \ref{lem:enumerate_top_dim_pieces}. Intuitively, each of these strata consists of collections of vectors where the $i$th vector is essential in the sense that removing it drops the rank of the analysis operator from $d$ to $d-1$. Just as the $Z_i$ can be viewed as the zero locus of a section of the tautological bundle, we will show that the $N_i$ are corresponding zero sets of sections into the complement to the tautological bundle.  

For each $i$ we can define a section $\tau_i:\Grtil_{d,2d}\to \tau_{d,2d}^\perp$ by defining $\tau_i(P)$ to be the orthogonal projection of the $i$th standard basis vector to $P^\perp$. A point in $N_i$ corresponds to a $d$-plane $F$ in $\RR^{2d}$ such that the orthogonal projection of the standard basis elements indexed by $J_i$ fails to span $F$. Let $P_F$ be the corresponding orthogonal projection matrix. Since the vectors $\{P_F(e_k)\}_{k\in J_i}$ fail to span, there is a non-zero vector $v\in F$ that is orthogonal to all of them. Computing we find
\[
0=\langle v,P_F(e_k)\rangle=\langle P_F(v),e_k\rangle=\langle v,e_k\rangle. 
\]
In other words $v$ is orthogonal to all the standard basis vectors except for the $i$th one and is thus a multiple of $e_i$. This means that we can reinterpret $N_i$ as consisting of $d$-planes that contain $e_i$. If $e_i$ is contained in $F$ then it orthogonally projects to 0 in $F^\perp$, and so we see that $N_i$ is the zero locus of $\tau_i$. 

This gives a nice way to interpret $e(E^\perp)$. Given a classifying map $f:M\to \Grtil_{d,2d}$ coming from a particular Parseval frame, we form the complement bundle $E^\perp$ via pullback of $\tau_{d,2d}^\perp$. The Euler class of $E^\perp$ measures our inability to deform the original frame so that the $i$th section is nowhere essential, in the sense that if it is omitted then we still have a frame field. When $n\geq 2d+1$ this Euler class vanishes for dimensional reasons and we see that there is never an obstruction to deforming any particular section to be inessential. Given our discussion above we can prove a generalization of Lemma \ref{lem:relation_to_Euler} that we will state after introducing some notation. 

 For $1\leq i\leq 4d$ we have corresponding projection maps $$p_i:H^d(\Grtil_{d,2d},\E_{d,2d};\pi_d(\Grtil_{d,2d},\E_{d,2d}))\to H^d(\Grtil_{d,2d},\E_{d,2d};\ZZ).$$ 
We use the same notation $p_i$ for the induced projections on $H^d(M;\ZZ^{4d})$. Orient the meridians around $Z_i$ and $N_i$ using their identifications with the fibers of $\tau_{d,2d}$ and $\tau_{d,2d}^{\perp}$, respectively. Let $E\to M$ be an orientable rank-$d$ vector bundle over a $d$-manifold $M$ and let $f:M\to \Grtil_{d,2d}$ be a classifying map (or equivalently a choice of Parseval frame field of size $2d$ on $E$). As in the $n\geq 2d+2$ case we have a single obstruction $o_{ENF}(f)=f^\ast(o_{ENF})\in H^d(M;\pi_d(\Grtil_{d,n},\E_{d,n}))$. This class is the unique obstruction to deforming $f$ so that it has image in $\E_{d,n}$, or equivalently, deforming the corresponding Parseval frame to be equal norm. One subtlety that arises in this setting is that $o_{ENF}(f)$ is not an invariant of $E$ but of the classifying map $f$. This is essentially just a reframing of the fact that the complementary bundle $E^\perp$ is not an invariant of the bundle $E$, but depends on a choice of Parseval frame. 

\begin{lemma}\label{lem:relation_to_dual_euler}
    Let $n=2d$, $d>2$, let $E\to M$ be an orientable rank-$d$ vector bundle, and $\{\sigma_1,\ldots,\sigma_{2d}\}$ a Parseval frame field on $E$. Let $E^\perp$ be the complementary bundle in $M\times \RR^{2d}$ corresponding to this frame. 
    \begin{itemize}
        \item For $1\leq i\leq n$, $p_i(o_{ENF}(f))=e(E)$.
        \item For $n+1\leq i\leq 2n$, $p_i(o_{ENF}(f))=e(E^\perp)$
        \item $\cap_i \ker p_i=0$
    \end{itemize}
\end{lemma}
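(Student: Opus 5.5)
The plan mirrors the proof of Lemma \ref{lem:relation_to_Euler}, now using two families of sections. For $1\leq i\leq n$ we use the tautological sections $\sigma_i:\Grtil_{d,2d}\to\tau_{d,2d}$, $\sigma_i(P)=P_P(e_i)$, with zero locus $Z_i$. For the remaining indices we use the complementary sections $\tau_i:\Grtil_{d,2d}\to\tau^\perp_{d,2d}$ constructed just above, whose zero locus is $N_i$. Both bundles have rank $d$ and both families avoid $\E_{d,2d}$: points of $\E_{d,2d}$ have $\abs{P_Fe_i}^2=\tfrac12$ and $\abs{P_{F^\perp}e_i}^2=\tfrac12$. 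So Proposition \ref{prop:poincare_dual_euler} with $A=\E_{d,2d}$ gives relative classes
\[
\sigma_i^\ast(u_{\tau_{d,2d}})=PD([Z_i]),\qquad \tau_i^\ast(u_{\tau^\perp_{d,2d}})=PD([N_i])\in H^d(\Grtil_{d,2d},\E_{d,2d};\ZZ).
\]
Their images in absolute cohomology are $e(\tau_{d,2d})$ and $e(\tau^\perp_{d,2d})$, respectively.

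The first step is to check transversality. For $\sigma_i$ this is as in the stable case. For $\tau_i$ it follows from the local description of $N_i=\{F: e_i\in F\}$: near $F\in N_i$, the map $u\mapsto W\oplus\RR(e_i+u)$ with $u\in F^\perp$ gives a normal $d$-disk. In these coordinates $\tau_i$ is, to first order, $u\mapsto -u$ up to normalization, so its derivative is an isomorphism onto the fiber $F^\perp$. This also fixes the orientation of the meridian around $N_i$ to agree with the convention stated before the lemma.

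The second step is to compute Kronecker pairings against the generators from Proposition \ref{prop:homotopy_of_stiefel_pair}. There the meridians $[\mu_1],\dots,[\mu_{2n}]$ around $Z_1,\dots,Z_n,N_1,\dots,N_n$ freely generate $\pi_d\cong H_d(\Grtil_{d,2d},\E_{d,2d})\cong\ZZ^{2n}$. By $(d-1)$-connectivity and universal coefficients, a class in $H^d(\Grtil_{d,2d},\E_{d,2d};\ZZ)$ is determined by its pairings with the $[\mu_j]$. By construction of the obstruction cocycle in these coordinates, $\langle p_i(o_{ENF}),[\mu_j]\rangle=\delta_{ij}$. A meridian around one component meets only that component, transversally in a single point, and the components are pairwise distinct. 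Hence the Poincaré dual of $[Z_i]$, respectively $[N_i]$, pairs to $\delta_{ij}$ with the meridians. This gives $p_i(o_{ENF})=\sigma_i^\ast u_{\tau_{d,2d}}$ for $i\leq n$ and $p_{n+i}(o_{ENF})=\tau_i^\ast u_{\tau^\perp_{d,2d}}$.

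The last step is to pull back along $f$. Since $f$ lands in $\Grtil_{d,2d}$ and the pair map sends $(M,\emptyset)$ in, naturality gives $p_i(o_{ENF}(f))=f^\ast e(\tau_{d,2d})=e(E)$ and $p_{n+i}(o_{ENF}(f))=f^\ast e(\tau^\perp_{d,2d})=e(E^\perp)$, using $E^\perp=f^\ast\tau^\perp_{d,2d}$. The statement $\bigcap_i\ker p_i=0$ follows exactly as before, because $H^d(\,\cdot\,;\ZZ^{2n})$ splits as a direct sum over the $2n$ coordinate projections.

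The main obstacle I expect is the sign and orientation bookkeeping for the $N_i$ meridians. The orientation of $\tau^\perp_{d,2d}$ has to be induced compatibly from $\RR^{2d}$ and the orientation of $\tau_{d,2d}$. If the convention fails, the identity could come out as $-e(E^\perp)$, which is harmless for the vanishing statements but should be fixed by choosing the meridian orientation via $d\tau_i$, as stipulated.
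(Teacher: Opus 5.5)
Your proposal is correct and takes essentially the same route as the paper. The paper's proof simply defers to Lemma~\ref{lem:relation_to_Euler}, with the sections $\tau_i$ of $\tau^\perp_{d,2d}$, whose zero loci are the $N_i$, playing the role of the $\sigma_i$. Your explicit computation $P_{G_u^\perp}e_i=(\abs{u}^2e_i-u)/(1+\abs{u}^2)$, which gives derivative $-\mathrm{id}$ on $F^\perp$, supplies the transversality and meridian-orientation check that the paper only asserts.
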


\begin{proof}
    The proof of the first and third bullet points is essentially the same as the proof of Lemma \ref{lem:relation_to_Euler}. For $2d+1\leq i\leq 4d$ our previous discussion shows that the $i$th generator of $\pi_d(\Grtil_{d,n},\E_{d,n})$ is a meridian around the zero locus of a transverse section of the universal bundle $\tau_{d,2d}^\perp$, hence exactly as in the case of the $Z_i$, the corresponding projection is just computing the Euler class $e(E^\perp)$.
\end{proof}

This lemma has the following corollary.
\begin{corollary}\label{cor:deform_frame_to_equal_norm_2d}
    Let $d>2$ and $n=2d$ and suppose that $E\to M$ is an orientable rank-$d$ vector bundle over $M$. Suppose that $\{\sigma_1,\ldots,\sigma_{2d}\}$ is a Parseval frame field for $E$. We can deform this frame through Parseval frames to be equal norm if and only if $e(E)=0$ and $e(E^\perp)=0$, where $E^\perp$ is the complement of $E$ determined by the original Parseval frame field.
\end{corollary}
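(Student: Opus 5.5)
The plan is to deduce Corollary~\ref{cor:deform_frame_to_equal_norm_2d} directly from Lemma~\ref{lem:relation_to_dual_euler}, in the same way the proof of Theorem~\ref{thm:main_thm} used Lemma~\ref{lem:relation_to_Euler}. Let $f:M\to\Grtil_{d,2d}$ be the classifying map of the given Parseval frame field (Lemma~\ref{lem:classifying_map}). Deforming the frame through Parseval frames of size $2d$ to an equal norm one is the same as homotoping $f$ inside $\Grtil_{d,2d}$ to a map with image in $\E_{d,2d}$. In one direction, a homotopy $f_t$ gives a family of analysis operators and hence a family of Parseval frames, as in Proposition~\ref{prop:shortenable_classifying_map}. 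In the other, a family of Parseval frames gives a family of classifying maps. So the statement reduces to a homotopy problem for $f$ relative to the pair $(\Grtil_{d,2d},\E_{d,2d})$.

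For necessity, suppose the deformed frame is equal norm with classifying map $f_1\simeq f$. Each section is then nowhere zero, so $e(E)=0$. By Proposition~\ref{prop:perp_has_trivial_euler_class}, the complement $f_1^\ast\tau^\perp_{d,2d}$ has trivial Euler class. Homotopy invariance gives $f_1^\ast\tau^\perp_{d,2d}\cong f^\ast\tau^\perp_{d,2d}=E^\perp$, so $e(E^\perp)=0$.

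For sufficiency, I would use the obstruction theory of Section~\ref{sec:obstruction_theory}. By Theorem~\ref{thm:d-1_connected_plus_generators} the pair $(\Grtil_{d,2d},\E_{d,2d})$ is $(d-1)$-connected, so $f$ can be homotoped (within $\Grtil_{d,2d}$) to carry the $(d-1)$-skeleton of $M$ into $\E_{d,2d}$. The only obstruction on the $d$-skeleton, which is all of $M$ since $\dim M=d$, is $o_{ENF}(f)=f^\ast o_{ENF}\in H^d(M;\pi_d(\Grtil_{d,2d},\E_{d,2d}))$. There are no higher obstructions for dimensional reasons. Lemma~\ref{lem:relation_to_dual_euler} gives that every coordinate projection $p_i(o_{ENF}(f))$ is either $e(E)$ or $e(E^\perp)$, and both vanish by hypothesis. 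Hence $o_{ENF}(f)\in\bigcap_i\ker p_i=0$, and $f$ deforms into $\E_{d,2d}$.

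The main subtlety is the naturality step: $o_{ENF}(f)$ must be exactly the obstruction for the particular map $f$, not merely for its class in $\Grtil_d$. This is why the homotopy must stay inside $\Grtil_{d,2d}$. The point to check is that pulling back the universal primary obstruction along $f$ gives the primary obstruction for $f$, with the cocycle computed after the skeletal deformation. This holds because $\pi_d$ of the pair is abelian and $\pi_1(\E_{d,2d})$ acts trivially, so the coefficients are untwisted. That in turn follows from the pair being simply connected and the Hurewicz identification used in Proposition~\ref{prop:homotopy_of_stiefel_pair}.
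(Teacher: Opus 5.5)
Your proposal is correct and follows the same route as the paper. Necessity comes from the nowhere-zero sections together with Proposition~\ref{prop:perp_has_trivial_euler_class}, where your homotopy-invariance step $f_1^\ast\tau^\perp_{d,2d}\cong f^\ast\tau^\perp_{d,2d}$ makes explicit what the paper calls ``the preceding discussion''; sufficiency comes from noting that each $p_i(o_{ENF}(f))$ is $e(E)$ or $e(E^\perp)$, so $o_{ENF}(f)\in\bigcap_i\ker p_i=0$ by Lemma~\ref{lem:relation_to_dual_euler}. One quibble in your last paragraph: the untwisted coefficients come from $\pi_1(\E_{d,2d})=0$, which follows from the long exact sequence of the $(d-1)$-connected pair (with $d>2$) and $\pi_1(\Grtil_{d,2d})=0$, not from the Hurewicz identification, which itself presupposes this.
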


\begin{proof}
By the preceding discussion, the Euler class restrictions are necessary for the frame to be deformed to have equal norm.

Let $f:M\to \Grtil_{d,2d}$ be the classifying map coming from the frame field $\{\sigma_1,\ldots,\sigma_{2d}\}$. The only obstruction to deforming this frame to be equal norm is $o_{ENF}(f)=f^\ast(o_{ENF})\in H^d(M,\pi_d(\Grtil_{d,2d},\E_{d,2d}))$. By hypothesis, $e(E)=e(E^\perp)=0$, and so by Lemma \ref{lem:relation_to_dual_euler} it follows that $p_i(o_{ENF}(f))=0$ for $1
\leq i\leq 4d$, and hence by applying Lemma \ref{lem:relation_to_dual_euler} again it follows that $o_{ENF}(f)=0$ and the frame can be deformed to be equal norm.
\end{proof}

As in the previous section we can show that the top Stiefel–Whitney class is the only additional obstruction to finding an equal norm Parseval frame. The argument again uses a surgery construction to drive the Euler class of $E^\perp$ to zero. We're now in a position to prove Theorem~\ref{thm:critical_dimension_theorem}, which we restate for convenience.

\begin{theorem}\label{thm:2d_obstruction}
    Let $d>2$, $n=2d$, and let $E\to M$ be an orientable rank-$d$ vector bundle over a closed orientable $d$-manifold. $E$ admits an equal-norm Parseval frame field of size $2d$ if and only if $e(E)=0$ and $\wbar_d(E)=0$.
\end{theorem}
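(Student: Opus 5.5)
Necessity is immediate from what has already been established. An equal norm Parseval frame field gives nowhere-zero sections of $E$, so $e(E)=0$. Proposition~\ref{prop:perp_has_trivial_euler_class} also gives $e(E^\perp)=0$ for the complement determined by the frame. Since $E^\perp$ has rank $d$, $w_d(E^\perp)$ is the mod 2 reduction of $e(E^\perp)$. Lemma~\ref{lem:sw_class_is_invariant} then gives $\wbar_d(E)=w_d(E^\perp)=0$.

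For sufficiency, assume $e(E)=0$ and $\wbar_d(E)=0$. By Theorem~\ref{thm:parseval_frames_exist} with $k=d$, choose a Parseval frame field of size $2d$, and let $f:M\to\Grtil_{d,2d}$ be its classifying map with complement $E_f^\perp=f^\ast\tau_{d,2d}^\perp$. By Corollary~\ref{cor:deform_frame_to_equal_norm_2d} it suffices to modify $f$, without changing the isomorphism class of $f^\ast\tau_{d,2d}$, so that $e(E_f^\perp)=0$.

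If $d$ is odd, Lemma~\ref{lem:euler_is_torsion} gives $e(E_f^\perp)=0$ for every choice of $f$, and we are done. If $d$ is even, $\langle e(E_f^\perp),[M]\rangle$ reduces mod 2 to $\wbar_d(E)=0$, so it is even. We then reduce it in steps of $\pm2$ by surgery, exactly as in the proof of Theorem~\ref{thm:d+k-1_obstruction} with $k=d$. Define
\[
\sigma:S^d\to\Grtil_{d,2d},\qquad \sigma(x)=\RR^{d-1}\oplus\langle x\rangle\subset \RR^{d-1}\oplus\RR^{d+1}.
\]
Then $\sigma^\ast\tau_{d,2d}\cong\epsilon^d$, so $\sigma$ is nullhomotopic in $\Grtil_d$, and $\sigma^\ast\tau^\perp_{d,2d}\cong TS^d$, which has Euler number $2$. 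Surgery of $f$ along a small disk with respect to $[\sigma]$, or $-[\sigma]$ (precomposing with a reflection of $S^d$), gives $f_\sigma\simeq(f\vee\sigma)\circ p$. This map still classifies $E$ in $\Grtil_d$. By Lemma~\ref{lem:change_of_euler}, its complement has Euler number changed by $\pm2$, and the sign can be chosen to move toward zero. Iterating yields a classifying map $f'$ of $E$ into $\Grtil_{d,2d}$ with $e(E_{f'}^\perp)=0$.

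Now check the hypotheses of Corollary~\ref{cor:deform_frame_to_equal_norm_2d}. We have $e(f'^\ast\tau_{d,2d})=e(E)=0$, since $f'^\ast\tau_{d,2d}\cong E$. Pulling back $\tau_{d,2d}$ along $f'$ and isometrizing as in Lemma~\ref{lem:classifying_map} produces a Parseval frame field of size $2d$ on $E$ whose complement is $E_{f'}^\perp$. The corollary therefore deforms this frame to an equal norm Parseval frame.

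The main obstacle is controlling the sign in the surgery step, so that the two orientation choices genuinely realize both $+2$ and $-2$. This requires checking that reversing the orientation of the sphere map reverses the pinch contribution in Lemma~\ref{lem:change_of_euler}. One must also verify that the orientation convention on $\sigma(x)$ makes it a well-defined map into the oriented Grassmannian.
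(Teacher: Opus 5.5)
Your proposal is correct and follows the paper's proof essentially step for step: necessity comes from Proposition~\ref{prop:perp_has_trivial_euler_class} and Lemma~\ref{lem:sw_class_is_invariant}, and sufficiency splits on the parity of $d$, surgers the classifying map along $\sigma(x)=\RR^{d-1}\oplus\langle x\rangle$ to change $\langle e(E_f^\perp),[M]\rangle$ by $\pm 2$ until it vanishes, and then applies Corollary~\ref{cor:deform_frame_to_equal_norm_2d}. The points you flag as obstacles are routine and only make explicit what the paper leaves implicit: the pinch sign in Lemma~\ref{lem:change_of_euler} depends only on the chosen disk, so precomposing $\sigma$ with a reflection of $S^d$ flips the contribution to $\mp 2$, and $\sigma$ is visibly continuous into $\Grtil_{d,2d}$ because the orientation $e_1\wedge\cdots\wedge e_{d-1}\wedge x$ varies continuously with $x$.
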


\begin{proof}
    From the previous discussion it is clear that $e(E)=\wbar_d(E)=0$ is a necessary condition for the existence of such a frame field.

    Again, if $d$ is odd Lemma \ref{lem:euler_is_torsion} tells us that all oriented rank-$d$ bundles over $M$ have trivial Euler class. This means that for any particular Parseval frame field $e(E)=e(E^\perp)=0$, and Corollary \ref{cor:deform_frame_to_equal_norm_2d} shows that there is no obstruction to deforming any Parseval frame to have equal norm. 
    
    Henceforth, we assume that $d$ is even.  Let $f:M\to \Grtil_{d,2d}$ be a classifying map for $E$. To make explicit the dependence on $f$ we will denote the corresponding complement as $E_f^\perp$. By hypothesis, $\wbar_d(E)=0$. Since $\wbar_d(E)$ is the mod 2 reduction of $e(E^\perp_f)$, it follows that $e(E_f^\perp)$ is even. 

This time we construct a similar map $\sigma:S^d\to \Grtil_{d,2d}$. Split $\RR^{2d}=\RR^{d-1}\times \RR^{d+1}$. Using this splitting and the standard embedding of $S^d$ into $\RR^{d+1}$ gives an embedding of $S^d$ into $\RR^{2d}$ and we henceforth implicitly regard $S^d$ as a subset of $\RR^{2d}$. For each $x\in S^d$ we can form a $d$-plane $\RR^{d-1}\oplus \langle x\rangle$. Declaring $\{e_1,\ldots,e_{d-1},x\}$ to be a positively oriented basis provides a map $\sigma:S^d\to \Grtil_{d,2d}$. Arguing as before we see that $\sigma^\ast(\tau_{d,2d})$ is trivial and $\sigma^\ast(\tau_{d,2d}^\perp)\cong TS^d$.

Applying surgery corresponding to $\sigma$ to the classifying map $f$ and arguing as in the proof of Theorem \ref{thm:d+k-1_obstruction} shows that we can eventually drive the Euler class of $E^\perp$ to 0 and apply Corollary \ref{cor:deform_frame_to_equal_norm_2d} to produce the desired equal norm Parseval frame field. 
\end{proof}

\section{Examples and non-existence results}\label{sec:examples}

Ultimately, we would like to use Propositions \ref{prop:euler_obstruction_for_parseval_frames} and \ref{prop:perp_has_trivial_euler_class} to find examples of bundles that do not admit Parseval frame fields of particular sizes. On its face, this looks challenging since Remark \ref{rem:perp_depends_on_frame} suggests that different Parseval frames may give rise to non-equivalent complements. However, there is an honest invariant that can be extracted. 

Using these ideas we can construct new counterexamples. Let $M=\CP^k$ be complex projective $k$-space and let $\xi$ be the tautological line bundle over $\CP^k$. The total Stiefel–Whitney class of $\xi$ is $1+x$, where $0\neq x\in H^2(\CP^k;\ZZ_2)\cong \ZZ_2$ is the mod 2 reduction of the first Chern class. Next, let $E_d=\xi\oplus \RR^{d-2}$, which we regard as a real oriented rank-$d$ vector bundle over a real manifold of dimension $2k$. If $d>2$, it follows that $E_d$ admits a nowhere-zero section, so $e(E_d)=0$. Furthermore, since the Stiefel–Whitney class is a stable invariant we have $w(E_d)=1+x$.

\begin{theorem}
    Let $d> 2$ and let $E_d\to M$ be as above. 
    \begin{itemize}
        \item $E_d$ does not admit a Parseval frame field of size $2k+d-1$.
        \item $E_{2k}$ does not admit an equal norm Parseval frame field of size $4k$ or $4k+1$.
    \end{itemize}

\end{theorem}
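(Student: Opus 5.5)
The plan is to compute the dual Stiefel–Whitney class $\wbar(E_d)$ explicitly and check that its top-degree piece is nonzero. Each nonexistence claim then follows from a necessary condition proved earlier in the paper.

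\emph{Step 1: compute $\wbar(E_d)$.} We have $H^*(\CP^k;\ZZ_2)\cong \ZZ_2[x]/(x^{k+1})$ with $\deg x=2$, and $w(E_d)=1+x$. Inverting in this truncated polynomial ring gives
\[
\wbar(E_d)=(1+x)^{-1}=1+x+x^2+\cdots+x^k .
\]
Signs disappear mod $2$, and the series terminates because $x^{k+1}=0$. Hence $\wbar_{2j}(E_d)=x^j$ for $0\le j\le k$, and all odd graded pieces vanish. In particular the top class is
\[
\wbar_{2k}(E_d)=x^k\neq 0 \in H^{2k}(\CP^k;\ZZ_2)\cong\ZZ_2 .
\]
$E_d$ is orientable because $\xi$ is a complex line bundle, and $\CP^k$ is a closed connected orientable manifold of dimension $2k$. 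So all earlier results apply with the base dimension equal to $2k$.

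\emph{Step 2: the first bullet.} Apply Theorem~\ref{thm:d+k-1_obstruction} with its base-dimension parameter equal to $2k$. This says $E_d$ admits a Parseval frame field of size $d+2k-1$ if and only if $\wbar_{2k}(E_d)=0$, which fails by Step 1. Only the easy direction of that theorem is needed, and it can be argued directly. A Parseval frame of size $d+2k-1$ gives a complement $E^\perp$ of rank $2k-1$. By Lemma~\ref{lem:sw_class_is_invariant}, $w(E^\perp)=\wbar(E_d)$. But $w_{2k}$ of a rank-$(2k-1)$ bundle is zero, contradicting $\wbar_{2k}(E_d)=x^k\ne 0$.

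\emph{Step 3: the second bullet.} Take $d=2k$. We need $d>2$, i.e.\ $k\ge 2$, so that $E_{2k}$ is defined with a nontrivial summand $\RR^{d-2}$ and the earlier results apply. Now the rank of $E_{2k}$ equals the dimension of the base.
\begin{itemize}
\item For size $4k=2d$: by Proposition~\ref{prop:perp_has_trivial_euler_class}, an equal norm Parseval frame forces $e(E^\perp)=0$. Its mod $2$ reduction is $w_d(E^\perp)=\wbar_d(E_{2k})=x^k$, which is nonzero. This is also the necessity half of Theorem~\ref{thm:2d_obstruction}.
\item For size $4k+1=2d+1$: the same proposition gives $w_d(E^\perp)=0$ directly. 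By Lemma~\ref{lem:sw_class_is_invariant} this equals $\wbar_d(E_{2k})=x^k\neq 0$, again a contradiction.
\end{itemize}

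There is no serious obstacle here. The only points needing care are the inversion $(1+x)^{-1}$ in the truncated ring, and checking the hypotheses ($d>2$ and orientability) under which the cited necessity statements were proved.
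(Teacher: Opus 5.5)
Your proposal is correct and follows the paper's proof. Both compute $\wbar(E_d)=1+x+\cdots+x^k$, so that $\wbar_{2k}(E_d)=x^k\neq 0$, then invoke the necessity direction of Theorem~\ref{thm:d+k-1_obstruction} for the first bullet and Proposition~\ref{prop:perp_has_trivial_euler_class} together with Lemma~\ref{lem:sw_class_is_invariant} for the second. Your direct argument that a rank-$(2k-1)$ complement has vanishing $w_{2k}$ simply unpacks the necessity direction that the paper cites.
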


\begin{proof}
    Suppose for contradiction that $E_d$ admits a Parseval frame of size $2k+d-1$. By Theorem \ref{thm:d+k-1_obstruction} we have $\wbar_{2k}(E_d)=0$. 

    However, $w(E_d)\wbar(E_d)=1$. Since $w(E_d)=1+x$, it follows that $\wbar(E_d)=1+x+x^2+\ldots+x^{k}$. In particular, $\wbar_{2k}(E_d)\neq 0$, which is a contradiction.

   The same computation, together with Proposition \ref{prop:perp_has_trivial_euler_class}, gives a similar contradiction in the equal norm setting. If an equal norm Parseval frame of size $4k$ (resp.\ $4k+1$) exists then we have $\wbar_{2k}(E_{2k})=0$. However this gives a similar contradiction as above since $\wbar_{2k}(E_{2k})=x^k\neq 0$.
\end{proof}

The examples above show that the dimensional restrictions of \cite[Theorem 4.1]{BallasNeedhamShonkwiler2025} and Theorem \ref{thm:main_thm} are sharp, in the sense that it is always possible to find an orientable vector bundle over an even-dimensional orientable base that doesn't admit a Parseval or equal norm Parseval frame field when the dimensional restrictions of the theorem are violated. Over an odd-dimensional orientable base the obstructions to finding equal norm Parseval frames coming from Theorem \ref{thm:2d_obstruction} vanish by the odd-rank Euler-class argument above, and so these arguments do not produce analogous counterexamples over odd-dimensional orientable bases. 

If one relaxes the orientability condition on the base then it is possible to produce orientable bundles over odd-dimensional bases that do not admit Parseval frames of size $d+k-1$.

The first such example occurs when \(k=d=3\). Let \(M=\mathbb{RP}^1\times\mathbb{RP}^2\), and let \(\xi_1\) and \(\xi_2\) denote the pullbacks of the tautological real line bundles over \(\mathbb{RP}^1\) and \(\mathbb{RP}^2\), respectively, under the projection maps. Consider the bundle \(E=\xi_1\oplus\xi_2\oplus(\xi_1\otimes\xi_2)\).

The cohomology ring with $\ZZ_2$ coefficients of \(M\) is \(H^*(M;\mathbb Z_2)\cong\mathbb Z_2[x,y]/(x^2,y^3)\),
where \(x\) and \(y\) are degree-$1$ generators coming from $\mathbb{RP}^1$ and $\mathbb{RP}^2$, respectively. Since \(w(\xi_1)=1+x\), \(w(\xi_2)=1+y\), and \(w(\xi_1\otimes\xi_2)=1+x+y\), the Whitney product formula gives $$w(E)=(1+x)(1+y)(1+x+y)=1+xy+y^2+xy^2.$$ In particular, \(w_1(E)=0\), so \(E\) is orientable. On the other hand, $\mathbb{RP}^2$ is non-orientable, so $M$ is a non-orientable manifold. Since \(w(E)^2=1\), we have \(\overline{w}(E)=w(E)\), and hence $\overline{w}_3(E)=xy^2\neq0$.
If such a frame existed, its rank-$2$ complement would have $w_3(E^\perp)=\wbar_3(E)\neq0$, a contradiction. Thus \(E\) does not admit a Parseval frame field of size \(d+k-1=5\). Taking Whitney sums with trivial bundles extends this example to all \(d\ge3\) when \(k=3\).

We can also construct examples for every odd \(k\ge5\). Write \(k=2t+3\), where \(t\ge1\), and let $M_k=\mathbb{RP}^1\times\mathbb{RP}^2\times\mathbb{CP}^{t}$. Let \(\xi_3\) be the pullback of the underlying real rank-$2$ bundle of the tautological complex line bundle over \(\mathbb{CP}^t\). Consider $F=\xi_1\oplus\xi_2\oplus(\xi_1\otimes\xi_2)\oplus\xi_3$.
The cohomology ring with $\ZZ_2$ coefficients is $H^*(M_k;\mathbb Z_2)\cong
\mathbb Z_2[x,y,z]/(x^2,y^3,z^{t+1})$, where \(\deg x=\deg y=1\) and \(\deg z=2\). Since \(w(\xi_3)=1+z\), we obtain $$w(F)=(1+xy+y^2+xy^2)(1+z).$$
Thus \(w_1(F)=0\), so the vector bundle is orientable although $M_k$ is non-orientable. Moreover,
$$\overline{w}(F)=(1+xy+y^2+xy^2)(1+z+z^2+\cdots+z^t),$$
and consequently $\overline{w}_k(F)=xy^2z^t\neq0,$ since $xy^2z^t$ is a generator of \(H^k(M_k;\mathbb Z_2)\). Finally, for any \(d\ge5\), taking \(E_d=F\oplus\epsilon^{d-5}\) gives an orientable rank-\(d\) vector bundle over the nonorientable \(k\)-manifold \(M_k\) satisfying $\overline{w}_k(E_d)\neq0$.

\bibliographystyle{plain}
\bibliography{bibliography}

@article{monaco2023topology,
  title={Topology vs localization in synthetic dimensions},
  author={Monaco, Domenico and Roussign{\'e}, Thaddeus},
  journal={Journal of Mathematical Physics},
  volume={64},
  number={1},
  year={2023},
  publisher={AIP Publishing}
}

@article{Atiyah1982,
  author  = {Atiyah, Michael F.},
  title   = {Convexity and Commuting Hamiltonians},
  journal = {Bulletin of the London Mathematical Society},
  volume  = {14},
  number  = {1},
  pages   = {1--15},
  year    = {1982},
  doi     = {10.1112/blms/14.1.1}
}

@article{Woodward2010,
  author  = {Woodward, Chris},
  title   = {Moment Maps and Geometric Invariant Theory:
             Corrected Version (October 2011)},
  journal = {Les cours du CIRM},
  volume  = {1},
  number  = {1},
  pages   = {121--166},
  year    = {2010},
  doi     = {10.5802/ccirm.29},
  url     = {https://www.numdam.org/articles/10.5802/ccirm.29/}
}

@article{cornean2019parseval,
  title={Parseval frames of exponentially localized magnetic Wannier functions},
  author={Cornean, Horia D. and Monaco, Domenico and Moscolari, Massimo},
  journal={Communications in Mathematical Physics},
  volume={371},
  number={3},
  pages={1179--1230},
  year={2019},
  publisher={Springer}
}

@article{packer2004projective,
  title={Projective multi-resolution analyses for {$L^2(\mathbb{R}^2)$}},
  author={Packer, Judith A. and Rieffel, Marc A.},
  journal={Journal of Fourier Analysis and Applications},
  volume={10},
  number={5},
  pages={439--464},
  year={2004},
  publisher={Springer}
}

@article{favazza2026phase,
  title={Phase retrieval and approximation of sections of vector bundles from sampled frame coefficients},
  author={Favazza, Kieran and Frank, Olivia and Freeman, Daniel and Ghoreishi, Dorsa and King, Bernadette and Randrianarivony, Nirina Lovasoa},
  journal={La Matematica},
  year={2026},
  publisher={Springer Nature},
  note={To appear}
}

@article{frank2002frames,
  title={Frames in {Hilbert} {$C^*$}-modules and {$C^*$}-algebras},
  author={Frank, Michael and Larson, David R.},
  journal={Journal of Operator Theory},
  pages={273--314},
  year={2002},
  publisher={JSTOR}
}

@book{whitehead2012elements,
  title={Elements of Homotopy Theory},
  author={Whitehead, George W.},
  year={2012},
  publisher={Springer Science \& Business Media}
}

@article{BallasNeedhamShonkwiler2025,
  author  = {Samuel A. Ballas and Tom Needham and Clayton Shonkwiler},
  title   = {On the Existence of {Parseval} Frames for Vector Bundles},
  journal = {Transactions of the American Mathematical Society, Series B},
  volume  = {12},
  year    = {2025},
  pages   = {395--416},
  doi     = {10.1090/btran/223},
  mrnumber = {4883097},
  zbl     = {1564.42022}
}

@article{CaineNeedhamShonkwiler2025,
  author       = {Anthony Caine and Tom Needham and Clayton Shonkwiler},
  title        = {Optimization and the Topology of Spaces of {Parseval} Frames},
  journal      = {SIAM Journal on Matrix Analysis and Applications},
  volume       = {47},
  number       = {3},
  year         = {2026},
  pages        = {1375--1399}
}

@book{BottTu1982,
  author    = {Raoul Bott and Loring W. Tu},
  title     = {Differential Forms in Algebraic Topology},
  series    = {Graduate Texts in Mathematics},
  volume    = {82},
  publisher = {Springer-Verlag},
  address   = {New York},
  year      = {1982},
  isbn      = {978-0-387-90613-3}
}

@book{milnor1974characteristic,
  title={Characteristic classes},
  author={Milnor, John Willard and Stasheff, James D.},
  volume={76},
  series={Annals of Mathematics Studies},
  year={1974},
  publisher={Princeton University Press},
  address={Princeton, NJ, USA}
}

@book{DolgachevInvariantTheory,
  author    = {Dolgachev, Igor V.},
  title     = {Lectures on Invariant Theory},
  series    = {London Mathematical Society Lecture Note Series},
  volume    = {296},
  publisher = {Cambridge University Press},
  address   = {Cambridge},
  year      = {2003},
  doi       = {10.1017/CBO9780511615436},
  isbn      = {978-0-521-52548-0}
}

@article{AmendolaKohnReichenbachSeigal,
  author  = {Am{\'e}ndola, Carlos and Kohn, Kathl{\'e}n and
             Reichenbach, Philipp and Seigal, Anna},
  title   = {Toric invariant theory for maximum likelihood
             estimation in log-linear models},
  journal = {Algebraic Statistics},
  volume  = {12},
  number  = {2},
  pages   = {187--211},
  year    = {2021},
  doi     = {10.2140/astat.2021.12.187}
}

@book{May1999,
  author    = {May, J. Peter},
  title     = {A Concise Course in Algebraic Topology},
  series    = {Chicago Lectures in Mathematics},
  publisher = {University of Chicago Press},
  address   = {Chicago, IL},
  year      = {1999}
}

@book{GuilleminPollack1974,
  author    = {Guillemin, Victor and Pollack, Alan},
  title     = {Differential Topology},
  publisher = {Prentice-Hall},
  address   = {Englewood Cliffs, NJ},
  year      = {1974}
}

@article{Malyi1974,
  author  = {Malyi, Boris D.},
  title   = {Euler Classes of Stably Equivalent Vector Bundles},
  journal = {Mathematics of the USSR-Izvestiya},
  volume  = {8},
  number  = {1},
  year    = {1974},
  pages   = {113--131},
  doi     = {10.1070/IM1974v008n01ABEH002098}
}

@book{FultonYoungTableaux,
  author    = {Fulton, William},
  title     = {Young Tableaux: With Applications to Representation Theory and Geometry},
  series    = {London Mathematical Society Student Texts},
  volume    = {35},
  publisher = {Cambridge University Press},
  address   = {Cambridge},
  year      = {1997},
  isbn      = {978-0-521-56724-4}
}

@article{LermanMomentMap,
  author  = {Lerman, Eugene},
  title   = {Gradient flow of the norm squared of a moment map},
  journal = {Enseign. Math. (2)},
  volume  = {51},
  number  = {1-2},
  year    = {2005},
  pages   = {117--127}
}

@book{GeorgoulasRobbinSalamon,
  author    = {Georgoulas, Valentina and Robbin, Joel W. and Salamon, Dietmar A.},
  title     = {The Moment-Weight Inequality and the Hilbert--Mumford Criterion:
               {GIT} from the Differential Geometric Viewpoint},
  series    = {Lecture Notes in Mathematics},
  volume    = {2297},
  publisher = {Springer},
  address   = {Cham},
  year      = {2021},
  doi       = {10.1007/978-3-030-89300-2},
  isbn      = {978-3-030-89299-9}
}

@book{waldron2018introduction,
  title={An Introduction to Finite Tight Frames},
  author={Waldron, Shayne F. D.},
  series={Applied and Numerical Harmonic Analysis},
  year={2018},
  publisher={Birkhäuser},
  address={New York, NY, USA}
}

@article{casazza2003equal,
  title={Equal-norm tight frames with erasures},
  author={Casazza, Peter G. and Kova{\v{c}}evi{\'c}, Jelena},
  journal={Advances in Computational Mathematics},
  volume={18},
  pages={387--430},
  year={2003},
  publisher={Springer}
}

@article{bodmann2005frames,
  title={Frames, graphs and erasures},
  author={Bodmann, Bernhard G. and Paulsen, Vern I.},
  journal={Linear Algebra and its Applications},
  volume={404},
  pages={118--146},
  year={2005},
  publisher={Elsevier}
}

@book{Husemoller1994,
  author    = {Dale Husemoller},
  title     = {Fibre Bundles},
  edition   = {3},
  series    = {Graduate Texts in Mathematics},
  volume    = {20},
  publisher = {Springer-Verlag},
  address   = {New York},
  year      = {1994},
  isbn      = {978-0-387-94087-8}
}

@article{kuchment2016overview,
  title={An overview of periodic elliptic operators},
  author={Kuchment, Peter},
  journal={Bulletin of the American Mathematical Society},
  volume={53},
  number={3},
  pages={343--414},
  year={2016}
}

@article{auckly2018parseval,
  title={On {Parseval} frames of exponentially decaying composite {Wannier} functions},
  author={Auckly, David and Kuchment, Peter},
  journal={Mathematical Problems in Quantum Physics},
  volume={717},
  pages={227--240},
  year={2018},
  publisher={American Mathematical Society Providence, RI}
}

@article{kuchment2008tight,
  title={Tight frames of exponentially decaying {Wannier} functions},
  author={Kuchment, Peter},
  journal={Journal of Physics A: Mathematical and Theoretical},
  volume={42},
  number={2},
  pages={025203},
  year={2008},
  publisher={IOP Publishing}
}

@article{han2000frames,
  title={Frames, bases and group representations},
  author={Han, Deguang and Larson, David R.},
  journal={Memoirs of the American Mathematical Society},
  volume={147},
  year={2000},
  number={697},
}

@book{CasazzaKutyniok2013,
  title     = {Finite Frames: Theory and Applications},
  editor    = {Casazza, Peter G. and Kutyniok, Gitta},
  series    = {Applied and Numerical Harmonic Analysis},
  publisher = {Birkh{\"a}user},
  address   = {Boston},
  year      = {2013}
}

@article{freeman2012moving2,
  AUTHOR = {Freeman, Daniel and Hotovy, Ryan and Martin, Eileen},
     TITLE = {Moving finite unit norm tight frames for {$S^n$}},
   JOURNAL = {Illinois Journal of Mathematics},
    VOLUME = {58},
      YEAR = {2014},
    NUMBER = {2},
     PAGES = {311--322},
      ISSN = {0019-2082},
   MRCLASS = {42C15 (57R22)},
  MRNUMBER = {3367650},
MRREVIEWER = {Stephan Fackler},
       URL = {http://projecteuclid.org.proxy.lib.fsu.edu/euclid.ijm/1436275485}
}

@article{freeman2012moving,
  AUTHOR = {Freeman, Daniel and Poore, Daniel and Wei, Ann Rebecca and Wyse, Madeline},
     TITLE = {Moving {P}arseval frames for vector bundles},
   JOURNAL = {Houston Journal of Mathematics},
    VOLUME = {40},
      YEAR = {2014},
    NUMBER = {3},
     PAGES = {817--832},
      ISSN = {0362-1588},
   MRCLASS = {42C15 (57R22)},
  MRNUMBER = {3275625},
MRREVIEWER = {Stephan Fackler},
}

@book {Steenrod_Fibre_Bundles,
    AUTHOR = {Steenrod, Norman},
     TITLE = {The {T}opology of {F}ibre {B}undles},
    SERIES = {Princeton Mathematical Series, vol. 14},
 PUBLISHER = {Princeton University Press},
   ADDRESS = {Princeton, NJ, USA},
      YEAR = {1951},
     PAGES = {viii+224},
   MRCLASS = {56.0X},
  MRNUMBER = {0039258},
MRREVIEWER = {S. Chern},
}

@book {Hatcher_Algebraic_Topology,
    AUTHOR = {Hatcher, Allen},
     TITLE = {Algebraic Topology},
 PUBLISHER = {Cambridge University Press, Cambridge},
      YEAR = {2002},
     PAGES = {xii+544},
      ISBN = {0-521-79160-X; 0-521-79540-0},
   MRCLASS = {55-01 (55-00)},
  MRNUMBER = {1867354},
MRREVIEWER = {Donald W. Kahn},
}

@article{holmes2004optimal,
  title={Optimal frames for erasures},
  author={Holmes, Roderick B. and Paulsen, Vern I.},
  journal={Linear Algebra and its Applications},
  volume={377},
  pages={31--51},
  year={2004},
  publisher={Elsevier}
}

@article{vershynin2005frame,
  title={Frame expansions with erasures: an approach through the non-commutative operator theory},
  author={Vershynin, Roman},
  journal={Applied and Computational Harmonic Analysis},
  volume={18},
  number={2},
  pages={167--176},
  year={2005},
  publisher={Elsevier}
}

@article{goyal2001quantized,
  title={Quantized frame expansions with erasures},
  author={Goyal, Vivek K. and Kova{\v{c}}evi{\'c}, Jelena and Kelner, Jonathan A.},
  journal={Applied and Computational Harmonic Analysis},
  volume={10},
  number={3},
  pages={203--233},
  year={2001},
  publisher={Elsevier}
}

@article{benedetto2006second,
  title={Second-order Sigma--Delta ($\Sigma$$\Delta$) quantization of finite frame expansions},
  author={Benedetto, John J. and Powell, Alexander M. and Y{\i}lmaz, {\"O}zg{\"u}r},
  journal={Applied and Computational Harmonic Analysis},
  volume={20},
  number={1},
  pages={126--148},
  year={2006},
  publisher={Elsevier}
}

@article{bodmann2007frame,
  title={Frame paths and error bounds for sigma--delta quantization},
  author={Bodmann, Bernhard G. and Paulsen, Vern I.},
  journal={Applied and Computational Harmonic Analysis},
  volume={22},
  number={2},
  pages={176--197},
  year={2007},
  publisher={Elsevier}
}

@phdthesis{caineNormalizingParsevalFrames2024,
  title = {Normalizing {{Parseval Frames}} by {{Gradient Descent}}},
  author = {Caine, Anthony},
  year = 2024,
  eprint = {10217/239830},
  eprinttype = {hdl},
  address = {Fort Collins, CO, USA},
  doi = {10.25675/3.04572},
  collaborator = {Peterson, Chris and Shonkwiler, Clayton},
  hdl = {10217/239830},
  langid = {english},
  school = {Colorado State University}
}
\end{document}